\title{The Widom--Sobolev formula for discontinuous matrix-valued symbols}
\author[L.\ Bollmann]{Leon Bollmann}
\address[Leon Bollmann]{Mathematisches Institut,
  Ludwig-Maximilians-Universit\"at M\"unchen,
  Theresienstra\ss{e} 39,
  80333 M\"unchen, Germany}
\email{bollmann@math.lmu.de}
\author[P.\ M\"uller]{Peter M\"uller}
\address[Peter M\"uller]{Mathematisches Institut,
 Ludwig-Maximilians-Universit\"at M\"unchen,
  Theresienstra\ss{e} 39,
  80333 M\"unchen, Germany}
\email{mueller@lmu.de}
\begin{document}

\begin{abstract}
	We prove the Widom--Sobolev formula for the asymptotic behaviour of truncated Wiener--Hopf operators with discontinuous matrix-valued symbols for three different classes of test functions. 
The symbols may depend on both position and momentum except when closing the asymptotics for twice differentiable test functions with H\"older singularities. The cut-off domains are allowed to have piecewise differentiable boundaries. In contrast to the case where the symbol is smooth in one variable, the resulting coefficient in the enhanced area law we obtain here remains as explicit for matrix-valued symbols as it is for scalar-valued symbols.
\end{abstract}

\maketitle
\thispagestyle{empty}

\section{Introduction}

Initiated by Szeg\H{o} \cite{Szego1915,Szego1952}, the analysis of the determinant of 
large Toeplitz matrices has continuously attracted attention ever since, see, e.g.,  
\cite{Fisher.Hartwig.1969, MR0331107, Basor, MR1724795, MR2223704, MR2831118}.
Early generalisations from matrices to integral operators of Wiener--Hopf type with discontinuous symbols were restricted to one spatial dimension \cite{LandauWidom80,  Widom1982} but conjectured to have a natural extension to the higher-dimensional case. The special case of a half space was treated in \cite{Widom1990}.
It took more than another two decades until the substantial work of Sobolev \cite{sobolevlong} together with its extensions \cite{sobolevschatten, sobolevpw, sobolevc2} provided a comprehensive proof of this conjecture for a large class of pseudodifferential operators, which is now known as the Widom--Sobolev formula. 
Since the beginning of the 21st century, such questions of Szeg\H{o}-type asymptotics attracted a great deal of additional attention because of their relevance to the large-volume behaviour of entanglement entropies for non-interacting Fermi gases \cite{GioevKlich,Helling:2011gr}. This led to further developments with regard to applications of Szeg\H{o} asymptotics 
 to Schr\"odinger operators \cite{LeschkeSobolevSpitzerLaplace, PhysRevLett.113.150404, ElgartPasturShcherbina2016, LeschkeSobolevSpitzerMagnetic, LeschkeSobolevSpitzerMultiScale, LeschkeSobolevSpitzerTemperature, MuPaSc19, Dietlein18, MuSc20, 10.1063/5.0135006, PfSp22, Pfeiffer21, PfSp23}. 
 
It is a natural question in which way these results extend to Wiener--Hopf operators with matrix-valued symbols. In the case of Wiener--Hopf operators which only feature a discontinuity in a single variable, this question has already been answered in \cite{Widom1980} and extended in \cite{Widom1985FullExpansion}. In this case it turns out that there is a major difference between the scalar-valued and matrix-valued case. While the coefficient of the ensuing area law has an explicit integral form in the scalar-valued case, in the matrix-valued case no such explicit form is known \cite[bottom of p.\ 5]{Widom1985FullExpansion} even when the space dimension is one. 
Instead, it still remains to compute a function of a one-dimensional Wiener--Hopf operator. 
Especially in higher space dimensions, the coefficient is even less accessible due to the lack of symmetry of the domain. Yet, the recent paper \cite{FinsterSob} manages to extract certain scaling information from this coefficient for a symbol related to the multi-dimensional
Dirac operator.
This rests on multi-scale techniques in the spirit of \cite{LeschkeSobolevSpitzerMultiScale} and on a generalisation of the results from \cite{Widom1980} both in the allowed symbols and test functions.

Up to now the case of discontinuities in both variables, which is often referred to as just 
the ``discontinuous case'', has only been studied to some degree in one spatial dimension for matrix-valued symbols \cite{Widom1982, FinsterI}. The systematic route towards an understanding of Szeg\H{o} asymptotics with a logarithmically enhanced term for matrix-valued symbols in higher dimensions consists of generalising appropriate 
parts of \cite{sobolevlong,  sobolevschatten, sobolevpw, sobolevc2}
to matrix-valued symbols. This is the goal of the present paper. 
In contrast to the case with a single discontinuity, the resulting coefficient of the enhanced area law has an integral representation which is as explicit as in the scalar-valued case. 

As in the scalar-valued case, the Szeg\H{o} asymptotics which we prove here 
for the matrix-valued case can be applied to  the scaling behaviour of entanglement entropies.
Non-interacting relativistic fermions described by the Dirac operator constitute a prime example
for such an application, see the forthcoming paper \cite{BM-inprep}. The recent work \cite{FinsterSob} is also devoted to Szeg\H{o} asymptotics for a non-interacting Fermi gas of free Dirac particles. Whereas \cite{FinsterSob} are concerned with an area law as in \cite{Widom1980}, 
the main focus of \cite{BM-inprep} is on situations where there is a logarithmic enhancement to the area law. 

Now, let us describe and contextualise the results of the present paper in some more detail. 
The goal is to prove several variants of the Widom--Sobolev formula for traces of test functions of truncated Wiener--Hopf operators with matrix-valued symbols. The three main results are Theorem~\ref{Asymptotics-Analytic} for analytic test functions $h$, Theorem~\ref{Asymptotics-Smooth} for arbitrarily often differentiable $h$ and Theorem~\ref{Asymptotics-Hoelder} for $h$ which are twice differentiable except at finitely many points where they merely obey a H\"older condition, see Assumption~\ref{H-Functions}. Notably, the test functions considered in Theorem~\ref{Asymptotics-Hoelder} include all R\'enyi entropy functions, in particular the von Neumann entropy function. This is one of the primary motivations to consider test functions as in Assumption~\ref{H-Functions}. In contrast, the allowed Wiener--Hopf operators decrease in generality from Theorem~\ref{Asymptotics-Analytic} to Theorem~\ref{Asymptotics-Hoelder}. 
For example, the truncated Wiener--Hopf operator in Theorem~\ref{Asymptotics-Smooth} is of the form 
	\begin{align}
		\label{GL-def}
		G_L(A_1,A_2;\Lambda, \Gamma) :=& \mathbf{1}_\Lambda \Op_L(\mathbf{1}_\Gamma) 
			\Real \big[\Op_L^l(\Real A_{1}) \big]	\Op_L(\mathbf{1}_\Gamma) \mathbf{1}_\Lambda \notag\\
		&+ 	\mathbf{1}_\Lambda \Op_L(\mathbf{1}_{\Gamma^{c}}) \Real \big[ \Op_L^l(\Real A_{2}) \big]
			\Op_L(\mathbf{1}_{\Gamma^{c}}) \mathbf{1}_\Lambda,
	\end{align}
	where $A_{1}$ and $A_{2}$ are suitable smooth symbols with values in the $\C^{n\times n}$-matrices which may depend on both position and momentum. The (standard) left-quantisation functor $\Op_{L}^{l}$ of the symbols is defined in \eqref{left-quant}, and $\Re T := (T+T^{*})/2$ is the self-adjoint part of a (bounded) operator $T$. Besides the discontinuity in space due to the restriction to the volume $\Lambda\subset\R^{d}$, the truncated Wiener--Hopf operator \eqref{GL-def} features a second, more general jump discontinuity at the boundary of the	momentum region $\Gamma \subset\R^{d}$ where the change from $A_{1}$ to $A_{2}$ occurs. Because of the two discontinuities, 	Theorem~\ref{Asymptotics-Smooth} yields a two-term asymptotics 		
	\begin{align}
	\label{bsp-thm}
\tr_{L^2(\R^d)\otimes\C^n}\big[h\big(G_L(A_1,A_2;&\Lambda, \Gamma)\big)\big] \nonumber \\ =& \ L^d \Big[\mathfrak{W}_0\big(\tr_{\C^n}[h(\Real A_1)];\Lambda,\Gamma\big)+\mathfrak{W}_0\big(\tr_{\C^n}[h(\Real A_2)];\Lambda,\Gamma^c\big)\Big]\nonumber\\&+L^{d-1}\log L \ \mathfrak{W}_1\big(\mathfrak{U}(h;\Real A_1,\Real A_2);\partial\Lambda,\partial\Gamma\big)\nonumber\\ &+o(L^{d-1}\log L),
\end{align}
as $L\rightarrow\infty$ with a logarithmically enhanced area term. Interestingly, the 
coefficients $\mathfrak{W}_0$ and $\mathfrak{W}_1$ are the same as in the scalar case, see \eqref{coeff-w0-def} and \eqref{coeff-w1-def}. For $d=1$, this was already found by Widom \cite{Widom1982}. We point out that the first argument of $\mathfrak{W}_0$ is a matrix trace and, hence, scalar valued. The same is true for the first argument of $\mathfrak{W}_1$, where the matrix trace is hidden in the definition \eqref{frakU-def} of the
scalar-valued symbol $\mathfrak U$.

An operator sum as in \eqref{GL-def} describes the general jump discontinuity of a symbol along the boundary of $\Gamma$. It is useful for applications like to the free Dirac operator in  \cite{BM-inprep}. However, it also represents an additional technical challenge if the matrix-valued symbols $A_1$ and $A_2$ do not commute. A careful consideration in the proof of Theorem \ref{Asymptotics-Gamma-Gamma-Complement} still allows to get coefficients similar to the scalar-valued case.
Such operator sums were first studied in 
\cite[Thm.\ 5.2]{sobolevc2} for scalar-valued symbols. In addition to working with matrix-valued symbols, we require slightly weaker assumptions on $\Gamma$ and on the symbols $A_{1}$ and $A_{2}$.
The first situation requires either $\Gamma$ or $\Gamma^{c}$ to be bounded. Then, only one symbol needs to have compact support in momentum, namely the one that is in the same term of \eqref{GL-def} as the unbounded momentum region. The second situation allows both $\Gamma$ and $\Gamma^{c}$ to be unbounded. But then, both symbols $A_{1}$ and $A_{2}$ are required to be compactly supported in momentum, see Remark~\ref{Remark-Gamma-GammaC}. These less stringent requirements are possible because of less stringent requirements in some Schatten--von Neumann estimates that we prove: Lemma~\ref{Commutation-Lemma} and 
Lemma~\ref{Estimate-Log-Q} allow both $\Lambda$ and $\Gamma$ to be unbounded (admissible) domains. This fact also simplifies several other steps leading to Theorems~\ref{Asymptotics-Analytic}, \ref{Asymptotics-Smooth} and~\ref{Asymptotics-Hoelder}.

The extension of \eqref{bsp-thm} from polynomial test functions $h$ to more general test functions as in Theorems~\ref{Asymptotics-Analytic}, \ref{Asymptotics-Smooth} and~\ref{Asymptotics-Hoelder} is often referred to as the ``closing of the asymptotics.'' In the case $A_{2}=0$ and for a scalar-valued symbol $A_{1}$, the results for different classes of test functions and (merely piece-wise) differentiable admissible domains $\Lambda$ and $\Gamma$ can be assembled from different papers within the large body of work of Sobolev \cite{sobolevlong, sobolevschatten, sobolevpw, sobolevc2} with precursors by Widom \cite{Widom1990} and Gioev \cite{10.1155/IMRN/2006/95181}.
In this paper, we give a unified and complete treatment of such results for matrix-valued symbols, without sacrificing too much generality for a given class of test functions.

The plan of the paper is as follows. In Section~\ref{sec:prelim} we describe the basic notions and notations. Section~\ref{sec:poly} is devoted to the proof of the Widom--Sobolev formula for polynomial test functions. This is done in Section~\ref{subsec:polyproof} after adapting and slightly generalising Schatten--von Neumann estimates for commutators from \cite{sobolevlong, sobolevschatten} to matrix-valued symbols in Section~\ref{subsec:commute}. Section~\ref{subsec:sumsymbols} extends the result of Section~\ref{subsec:polyproof} to Wiener--Hopf operators  which are a sum of two terms as in \eqref{GL-def}. 
The closing of the asymptotics is done in Section~\ref{sec:closing-asymp}.
Section~\ref{subsec:analytic} treats analytic functions, Section~\ref{subsec:smooth} arbitrarily often differentiable functions and Section~\ref{subsec:Hoelder} functions which are twice differentiable except at finitely many points where they merely obey a H\"older condition.

%%%%%%%%%%%%%%%%%%%%%%%%%%%%%%%%%%%%%%%%%%%%%%%%%%%%%%%%%%%%%%
%%%%%%%%%%%%%%%%%%%%%%%%%%%%%%%%%%%%%%%%%%%%%%%%%%%%%%%%%%%%%%

\section{Notation and preliminaries}
\label{sec:prelim}

\subsection{Admissible domains}

We mostly follow the terminology in \cite{sobolevschatten}.

\begin{defn}
	Given a natural number $d \in\N\setminus\{1\}$, we call a subset $\Omega\subset\R^d$ 
	a \emph{basic domain}, if there exists a Lipschitz function 
	$\Phi:\R^{d-1}\rightarrow\R$ and a suitable choice (obtained by relabelling and rotation) of Cartesian coordinates 
	$ \R^{d} \ni x=(x_1,x_2,\ldots,x_d)$ such that 
	\begin{align}
		\Omega= \big\{x\in\R^d : x_d>\Phi(x_1,x_2,\ldots,x_{d-1})\big\}.
	\end{align}
	For $m\in\N$, we further call a basic domain a \emph{piece-wise $C^m$-basic domain}, if, in addition, 
	$\Phi$ is a piece-wise $C^m$-function. 
	In the case $d=1$, we call  $\Omega\subset\R$ a (piece-wise $C^m$-) basic domain, if it is an open 
	interval of the form $\,]a,\infty[\,$, respectively $\,]-\infty,a[\,$, for arbitrary $a\in\R$.
\end{defn}

\begin{defn}\label{Definition-Admissible-Domain}
We call $\Omega\subset\R^d$ an \emph{admissible domain}, if it can be locally represented as a basic domain, i.e. for all $x\in\R^d$ there is some $r>0$ such that for $B_r(x)$, the open ball of radius $r$ about $x$ in $\R^{d}$, we have 
\begin{align}\label{Def-Local-Representation}
B_r(x)\cap\Omega = B_r(x)\cap\Omega_x
\end{align}
for some basic domain $\Omega_x$.
For $m\in\N$, we further call an admissible domain a \emph{piece-wise $C^m$-admissible domain}, if it can be locally represented as a piece-wise $C^m$-basic domain, i.e.\ the domains $\Omega_x$ in (\ref{Def-Local-Representation}) are piece-wise $C^m$-basic domains. 
\end{defn}

\begin{rem}
	\begin{enumerate}[(a)]
	\item
		Given a basic domain $\Omega_0$, the domain $(\overline{\Omega_0})^c$ is also a basic domain by the 
		coordinate transformation $x_d\mapsto -x_d$ and replacing $\Phi$ by $-\Phi$. 
		This extends to admissible domains in the following way. Given an admissible domain $\Omega$ locally 
		represented by basic domains $\Omega_x$, the domain $(\overline{\Omega})^c$ is locally represented 
		by $(\overline{\Omega_x})^c$ and therefore again admissible.
	\item
		Note that the boundary of a basic domain has Lebesgue measure zero as it is the graph of a Lipschitz function. 
		This also extends to admissible domains, as $\R^d$ is heredetarily Lindel\"of -- every metrisable 
		space is heredetarily Lindel\"of, see, e.g., \cite[Ex.\ 3.8.A, Cor.\ 4.1.13]{engelking1989general} -- 
		and therefore each open cover of the boundary has a countable subcover.
	\item
		Given an admissible domain $\Omega$, the operator $1_{\Omega^c}$ of multiplication with the 
		indicator function of $\Omega^c$ agrees with the operator $1_{(\overline{\Omega})^c}$ on $L^2(\R^d)$, 
		as $\partial \Omega$ has measure zero. Therefore, the operator $1_{\Omega^c}$ acts as multiplication 
		with the indicator function of an admissible domain. This will be useful later in the paper.
	\end{enumerate}
	\label{Remark-Complement-Admissible}
\end{rem}

%%%%%%%%%%%%%%%%%%%%%%%%%%%%%%%%%%%%%%%%%%%%%%%%%%%%

\subsection{Complex-valued symbols}
Given a natural number $d\in\N$, we consider \emph{complex-valued amplitudes}
$a\in C^\infty_{b}(\R^d\times\R^d\times\R^d)$ which are smooth, i.e.\ arbitrarily often differentiable,
and have the property that $a$ and any partial derivative of $a$ of arbitrary order is bounded. 
The first two variables of $a$ play the role of space variables, the last one of a momentum variable. 
We denote the first variable by $x$, the second by $y$ and the third by $\xi$. 
Given any such amplitude, the following integral formula defines a bounded
\cite{cordes1975compactness} linear operator on the Hilbert space $L^{2}(\R^{d})$ of complex-valued 
square-integrable functions over $\R^d$, which leaves Schwartz space $\mathcal{S}(\R^d)$ invariant. Given 
a Schwartz function $u\in\mathcal{S}(\R^d)$, its action is defined by  
\begin{align}
\big(\Op_L^{lr}(a)u\big)(x):= \left(\frac{L}{2\pi}\right)^d \int_{\R^d} \!\int_{\R^d} \e^{\i L\xi(x-y)}a(x,y,\xi)u(y)\,\d y\,\d\xi
\end{align}
for every $x\in\R^{d}$. Here, $\i$ denotes the imaginary unit, and the integrations are with respect to 
Lebesgue measure in $\R^{d}$.
In the case that the function $a$ does not depend on both $x$ and $y$, we call $a$ 
a \emph{complex-valued symbol} and define the left and right operators of $a$ by
\begin{align}
\big(\Op_L^l(a)u\big)(x):= \left(\frac{L}{2\pi}\right)^d \int_{\R^d} \!\int_{\R^d} \e^{\i L\xi(x-y)}a(x,\xi)u(y)
\,\d y\,\d\xi
\end{align}
and
\begin{align}
\big(\Op_L^r(a)u\big)(x):= \left(\frac{L}{2\pi}\right)^d \int_{\R^d} \!\int_{\R^d} \e^{\i L\xi(x-y)}a(y,\xi)u(y)
\,\d y\,\d\xi.
\end{align}
If $a$ depends only on $\xi$, we no longer need the distinction between left and right operators and just write $\Op_L(a)$. In this case, we have 
\begin{equation}
	\label{mult-op-in-mom}
	\Op_L(a) = \mathcal{F}^{-1} a(\,\pmb\cdot /L) \mathcal{F},
\end{equation} 
where $\mathcal{F}$ is the unitary Fourier transform on $L^{2}(\R^{d})$ and, on the right-hand side, 
$a$ is to be understood as a multiplication operator (in Fourier space). Thus, in the case of 
\eqref{mult-op-in-mom}, $\Op_{L}(a)$ gives rise to a well-defined and bounded operator on 
$L^{2}(\R^{d})$, whenever the symbol $a$ is an essentially  bounded measurable function on $\R^{d}$.

We now introduce discontinuities in both variables $x$ and $\xi$ of the left operator of the symbol $a$. Let $\Lambda, \Gamma \subseteq \R^{d}$ be bounded measurable subsets. Then we define the following operator on $L^2(\R^d)$ 
\begin{align}
\label{scalar-T-op}
T_L(a):= T_L(a;\Lambda,\Gamma):= 1_\Lambda \Op_L(1_\Gamma) \Op_L^l(a) \Op_L(1_\Gamma) 1_\Lambda,
\end{align} 
where $1_\Lambda,1_\Gamma$ are the associated indicator functions, $1_{\Lambda}$ acts as a 
multiplication operator and $\Op_{L}(1_{\Gamma})$ is defined by \eqref{mult-op-in-mom}. 
The operator $\Op_L(1_\Gamma)1_\Lambda$ is trace class according to \cite[Chap.\ 11, Sect.\ 8, Thm.\ 11]{BirmanSolomjak}. Therefore, $T_L$ and its symmetrised version
\begin{align}
\label{scalar-S-op}
S_L(a):= S_L(a;\Lambda,\Gamma):= 1_\Lambda \Op_L(1_\Gamma) \Real \big[\Op_L^l(\Real a)\big] \Op_L(1_\Gamma) 1_\Lambda,
\end{align} 
which is self-adjoint and has a real-valued symbol, are both trace-class. Here, we also used $\Re$ to denote the self-adjoint part $\Re Q := (Q+Q^{*})/2$ of a bounded operator $Q$. 

There is a famous conjecture by Widom \cite{Widom1982}, stating that the asymptotic formula 
\begin{align}
	\label{eq:Widom-conj}
	\tr_{L^{2}(\R^{d})} \big[g\big(S_L(a)\big)\big] 
		=&L^d\mathfrak{W}_0\big( g(\Real a);\Lambda,\Gamma\big) 
			+ L^{d-1}\log L\; \mathfrak{W}_1\big(\mathfrak{A}(g;\Real a);\partial\Lambda,\partial\Gamma\big) \notag\\
		&+ o(L^{d-1}\log L)
\end{align}
holds for suitable test functions $g:\R\rightarrow \R$ with $g(0)=0$ as $L\rightarrow\infty$.
This was proved by Sobolev in \cite{sobolevlong, sobolevpw} for analytic and smooth test functions and extended in \cite{ sobolevschatten, sobolevc2} to test functions which are twice differentiable except at finitely many points where they merely obey a H\"older condition under the assumption that the symbol $a$ only depends on the variable $\xi$.
In all three cases $\Lambda$ is required to be a piece-wise $C^1$-admissible domain and $\Gamma$ is required to be a piece-wise $C^3$-admissible domain. 
The coefficients are given for continuous symbols $b$ by
\begin{align}
\label{coeff-w0-def}
	\mathfrak{W}_0(b;\Lambda,\Gamma):= \frac{ 1}{(2\pi)^d}\int_\Lambda\int_\Gamma b(x,\xi) \,\d\xi \,\d x
\end{align}
and 
\begin{align}
	\label{coeff-w1-def}
	\mathfrak{W}_1(b;\partial\Lambda,\partial\Gamma):= 
	\LEFTRIGHT\{.{\begin{array}{l@{\quad}l} \displaystyle \sum_{(x,\xi)\in\partial\Lambda\times\partial\Gamma}b(x,\xi), 
			& \text{for~} d=1, \\[4ex]
		\displaystyle\frac{1}{(2\pi)^{d-1}}\int_{\partial\Lambda}\int_{\partial\Gamma} b(x,\xi)\,
			\vert n_{\partial \Lambda}(x)\cdot n_{\partial \Gamma}(\xi)\vert \,\d S(\xi)\, \d S(x), 
			& \text{for~} d\ge 2, \end{array}}
\end{align}
where $n_{\partial \Lambda}$, respectively $n_{\partial \Gamma}$, denotes the vector field of exterior unit normals in $\R^{d}$ to the boundary $\partial \Lambda$, respectively $\partial  \Gamma$. We write $\d S$ for integration with respect to the $(d-1)$-dimensional surface measure induced by Lebesgue measure in $\R^{d}$. 
Finally, the symbol $\mathfrak{A}(g;b)$ in \eqref{eq:Widom-conj} is given by
\begin{align}
	\label{frakAsym}
	\mathfrak{A}(g;b)(x,\xi):=  \frac{1}{(2\pi)^2}\int_0^1\frac{g\big(t b(x,\xi)\big) - 
	tg\big(b(x,\xi)\big)}{t(1-t)}\,\d t
\end{align}
for every $x,\xi\in\R^{d}$ and for H\"older-continuous test functions $g$.

\begin{rem}
In the case of analytic or smooth test functions, the above asymptotics \eqref{eq:Widom-conj} holds for a more general class of symbols which are not required to be arbitrarily often differentiable but merely have a finite symbol norm $\mathbf{N}^{(m_{x},m_{\xi})}(a)$ for $m_{x}=m_{\xi}=d+2$. For the definition of the symbol norm, we refer to Definition \ref{defn-N-Norm-Matrix} below with $n=1$.
If the test function $g$ is a non-smooth function, which is relevant for applications, the required parameter $m_{\xi}$ in the norm increases significantly. This is the reason why we restrict ourselves to smooth symbols, thereby minimising some technical efforts.
\end{rem}

%%%%%%%%%%%%%%%%%%%%%%%%%%%%%%%%%%%%%%%%%%%%%%%%%%%%%%%%%%%%%%%%

\subsection{Matrix-valued symbols}

Given a complex matrix $A\in\C^{n\times n}$ with matrix dimension $n\in\N$, we denote its entry in the $\nu$th row and $\mu$th column by $(A)_{\nu\mu}$, where $\nu,\mu\in\{1, \ldots,n\}$. We now introduce \emph{matrix-valued amplitudes} $A \in C^{\infty}_{b}(\R^{d} \times \R^{d} \times \R^{d}, \C^{n\times n})$ which are arbitrarily often differentiable and have the property that $A$ and any partial derivative of $A$ of arbitrary order is bounded. We will always identify this space with the tensor product 
\begin{equation}
	C^{\infty}_{b}(\R^{d} \times \R^{d} \times \R^{d}, \C^{n\times n}) 
	= C^{\infty}_{b}(\R^{d} \times \R^{d} \times \R^{d}) \otimes \C^{n\times n}.
\end{equation}
Thus, $A \in C^{\infty}_{b}(\R^{d} \times \R^{d} \times \R^{d}, \C^{n\times n})$ is equivalent to requiring that the matrix entries of $A$ are corresponding complex-valued amplitudes, i.e.\ 
$(A)_{\nu\mu} \in C^{\infty}_{b}(\R^{d} \times \R^{d} \times \R^{d})$ for any $\nu,\mu\in\{1, \ldots,n\}$. 
If $A\in C^{\infty}_{b}(\R^{d} \times \R^{d}, \C^{n\times n})$ does not depend on both $x$ and $y$, we call $A$ a \emph{matrix-valued symbol}.

In analogy to the scalar case, we define bounded -- see Lemma \ref{Bounded-Lemma} --  matrix-valued operators 
$\Op_L^{lr}(A),\Op_L^l(A)$ and $\Op_L^r(A)$ on the product Hilbert space $L^2(\R^d)\otimes\C^n$. Their 
action on Schwartz functions $u\in\mathcal{S}(\R^d) \otimes \C^n$ is given by  
\begin{align}
	\label{amplitude-quant}
	\big(\Op_L^{lr}(A)u\big)(x):= \left(\frac{L}{2\pi}\right)^d \int_{\R^d}\! \int_{\R^d} \e^{\i L\xi(x-y)}A(x,y,\xi)u(y)\d y\d\xi,
\end{align}
\begin{align}
	\label{left-quant}
	\big(\Op_L^l(A)u\big)(x):= \left(\frac{L}{2\pi}\right)^d \int_{\R^d} \!\int_{\R^d} \e^{\i L\xi(x-y)}A(x,\xi)u(y)\d y\d\xi
\end{align}
and
\begin{align}
	\label{right-quant}
\big(\Op_L^r(A)u\big)(x):= \left(\frac{L}{2\pi}\right)^d \int_{\R^d} \!\int_{\R^d} \e^{\i L\xi(x-y)}A(y,\xi)u(y)\d y\d\xi,
\end{align}
for every $x\in\R^{d}$. We write $\Op_L(A)$, if $A$ only depends on the variable $\xi$. In this case, we have 
\begin{equation}
	\label{mult-op-matrix-in-mom}
	\Op_L(A) = (\mathcal{F}^{-1} \otimes \mathbb{1}_{n}) \, A(\,\pmb\cdot /L) \, (\mathcal{F} \otimes \mathbb{1}_{n}),
\end{equation} 
where, on the right-hand side, $A$ is to be understood as a multiplication operator in $L^2(\R^d)\otimes\C^n$. 
Thus, in the case of \eqref{mult-op-matrix-in-mom}, $\Op_{L}(A)$ gives rise to a well-defined and bounded operator on 
$L^{2}(\R^{d})\otimes\C^n$, whenever the symbol $A$ is an essentially bounded measurable matrix-valued 
function on $\R^{d}$.

\begin{defn}
	\label{defn-N-Norm-Matrix}
	Given non-negative integers $m_{x},m_{y},m_{\xi} \in \N_{0}$ and a matrix-valued amplitude 
	$A\in C^{\infty}_{b}(\R^{d} \times \R^{d} \times \R^{d}, \C^{n\times n})$, we introduce 
	the \emph{symbol norm}
	\begin{align}
		\label{Definition-N-Matrix}
		\mathbf{N}^{(m_{x},m_{y},m_{\xi})}(A):=\max_{\substack{|\alpha|\leq m_{x} \\ |\beta|\leq m_{y} \\ 
			|\gamma|\leq m_{\xi}}} 
		\;\sup_{x,y,\xi\in\R^{d}}\tr_{\C^n} \big|\partial_x^{\alpha} \partial_y^{\beta} \partial_\xi^{\gamma} A(x,y,\xi)\big|
		< \infty
	\end{align}
	of $A$. Here, $\alpha,\beta,\gamma\in\N_{0}^{d}$ are multi-indices, $|\alpha|:=\sum_{j=1}^{d}|\alpha_j|$ and 
$\partial_{u}^{\alpha}:=\frac{\partial^{|\alpha|}}{\partial_{u_1}^{\alpha_1}\cdot\ldots\cdot\partial_{u_d}^{\alpha_d}}$, where $\partial_{u_j}$ denotes the partial derivative with respect to the $j$th component of the variable $u\in\R^{d}$.
	If $A\in C^{\infty}_{b}(\R^{d} \times \R^{d}, \C^{n\times n})$ is a matrix-valued symbol 
	which does not depend on the variable $y$, respectively $x$,  we set 
	$\mathbf{N}^{(m_{x},m_{\xi})}(A) := \mathbf{N}^{(m_{x},m_{y},m_{\xi})}(A)$ for arbitrary $m_{y}$, respectively 
	$\mathbf{N}^{(m_{y},m_{\xi})}(A) := \mathbf{N}^{(m_{x},m_{y},m_{\xi})}(A)$ for arbitrary $m_{x}$.
\end{defn}

The indicator functions of the bounded measurable subsets $\Lambda, \Gamma \subset \R^{d}$ on $L^2(\R^d)\otimes\C^n$ 
are given by
\begin{align}
\mathbf{1}_\Lambda:=  1_\Lambda \otimes \mathbb{1}_{n}; \ \ \ \ \ \ \  \ \ \mathbf{1}_\Gamma:=  1_\Gamma \otimes \mathbb{1}_{n},
\end{align}
where the respective second factor denotes the $n\times n$-unit matrix.
We also introduce operators induced by matrix-valued symbols $A\in C^{\infty}_{b}(\R^{d} \times \R^{d}, \C^{n\times n})$ with discontinuities in both variables
\begin{align}
\label{def-T-Op}
T_L(A):= T_L(A;\Lambda,\Gamma):= \mathbf{1}_\Lambda \Op_L(\mathbf{1}_\Gamma) \Op_L^l(A) \Op_L(\mathbf{1}_\Gamma) \mathbf{1}_\Lambda
\end{align}
and 
\begin{align}
\label{def-S-Op}
S_L(A):= S_L(A;\Lambda,\Gamma):= \mathbf{1}_\Lambda \Op_L(\mathbf{1}_\Gamma) \Real\big[ \Op_L^l(\Real A) \big]\Op_L(\mathbf{1}_\Gamma) \mathbf{1}_\Lambda,
\end{align}
where we employ the same notation as in the scalar cases \eqref{scalar-T-op} and \eqref{scalar-S-op}.

We want to reduce the traces of these operators to the traces of operators with scalar-valued symbols.
For $\nu,\mu \in\{1,\ldots,n\}$, let $E_{\nu\mu}\in\C^{n\times n}$ be the canonical matrix unit with 
entry one in the $\nu$th row and $\mu$th column and all other entries equal to zero. We expand the matrix-valued symbol $A$ with respect to this matrix basis 
\begin{align}\label{Matrix-Tensor-Product}
A=\sum_{\nu,\mu=1}^n (A)_{\nu\mu} \otimes E_{\nu\mu}
\end{align}
and use linearity to obtain
\begin{align}
	T_L(A)=\sum_{\nu,\mu=1}^n T_L\big((A)_{\nu\mu} \otimes E_{\nu\mu}\big) 
	= \sum_{\nu,\mu=1}^n T_L\big((A)_{\nu\mu}\big)\otimes E_{\nu\mu}.
\end{align}
Given a trace-class operator $T$ on $L^2(\R^d)$ and a matrix $M\in\C^{n\times n}$, their elementary tensor product $T\otimes M$ is trace class on $L^2(\R^d)\otimes\C^n$ with (standard) trace $\tr_{L^2(\R^d)\otimes\C^n} T\otimes M = (\tr_{L^2(\R^d)}T )( \tr_{\C^n} M).$ 
In particular, the operator $T_L(A)$ is trace class with 
\begin{align}
\tr_{L^2(\R^d)\otimes\C^n}T_L(A)=\sum_{\nu=1}^n \tr_{L^2(\R^d)}T_L\big((A)_{\nu\nu}\big)=\tr_{L^2(\R^d)}[T_L(\tr_{\C^n} A)].
\end{align}
In the same way, the operator $S_L(A)$ is seen to be trace class. 

Motivated by the applications to the free Dirac operator in \cite{BM-inprep}, 
it will be useful to extend the asymptotics to slightly more general Wiener--Hopf operators which are 
not only restricted to the momentum region $\Gamma\subset\R^{d}$ as in \eqref{def-T-Op} and 
\eqref{def-S-Op} but, instead, exhibit a more general jump discontinuity at the boundary $\partial\Gamma$ with different matrix-valued symbols $A_1,A_2\in C^{\infty}_{b}(\R^{d} \times \R^{d}, \C^{n\times n})$ on the inside, respectively outside of $\partial\Gamma$. Since at least one of the domains $\Gamma$ and $\Gamma^c$ is not bounded we require the symbols $A_1$, respectively $A_2$, to be compactly supported in the second variable, 
when $\Gamma$, respectively $\Gamma^c$, is not bounded. This guarantees the trace-class property of the operators 
\begin{align}
\label{Definition_DL}
D_L(A_1,A_2):= D_L(A_1,A_2;\Lambda,\Gamma):=  T_L(A_1;\Lambda,\Gamma)+T_L(A_2;\Lambda,\Gamma^c)
\end{align}
and 
\begin{align}\label{Definition_GL}
G_L(A_1,A_2):= G_L(A_1,A_2;\Lambda,\Gamma):=  S_L(A_1;\Lambda,\Gamma)+S_L(A_2;\Lambda,\Gamma^c).
\end{align}
In this case, we also need to adapt the symbol $\mathfrak{A}(g;b)$ in \eqref{frakAsym} featuring in the 
$\mathfrak{W}_1$-coefficient to account for both $A_1$ and $A_2$. Similarly to the situation in $d=1$ dimension for matrix-valued symbols with general jump discontinuities in \cite{Widom1982}, the appropriate replacement appearing in Theorems~\ref{Asymptotics-Gamma-Gamma-Complement},
 \ref{Asymptotics-Analytic}, \ref{Asymptotics-Smooth} and \ref{Asymptotics-Hoelder} is the scalar symbol
\begin{equation}
	\label{frakU-def}
	\mathfrak{U}(g;B_1,B_2):=  \frac{1}{(2\pi)^2}\int_0^1 
	\frac{\tr_{\C^n}\big[g\big(B_1t+B_2(1-t)\big)-g(B_1)t-g(B_2)(1-t)\big]}{t(1-t)}\,\d t
\end{equation}
which is defined for bounded matrix-valued symbols $B_{1}, B_{2}$ and for H\"older continuous functions 
$g: \R \to\C$.

%%%%%%%%%%%%%%%%%%%%%%%%%%%%%%%%%%%%%%%%%%%%%%%%%%%%%%%%%%
%%%%%%%%%%%%%%%%%%%%%%%%%%%%%%%%%%%%%%%%%%%%%%%%%%%%%%%%%%

\section{Asymptotic formula for polynomials}
\label{sec:poly}

In the remaining part of this paper we use the letters $C, C_{1}, C_{2}, C_{\nu\mu}$, etc.\ to denote generic positive constants whose value may differ from line to line.

We first give a proof that the matrix-valued operators 
\eqref{amplitude-quant} -- \eqref{right-quant} are bounded operators on 
$L^2(\R^d)\otimes\C^n$ with operator norms $\|\cdot\|$ uniformly bounded in $L$. 
This is a simple reduction to the scalar case, which is contained in \cite[Chap.\ 3]{sobolevlong}.

\begin{lem}\label{Bounded-Lemma}
Let  $A\in C^{\infty}_{b}(\R^{d} \times \R^{d} \times \R^{d}, \C^{n\times n})$ be a matrix-valued amplitude. 
Then, for every $L\geq 1$ we have 
\begin{align}
\|\Op_L^{lr}(A)\|\leq C \mathbf{N}^{(m,m,d+1)}(A) < \infty,
\end{align}
where $m:=\big\lfloor \frac{d}{2}\big\rfloor+1$, and the constant $C$ is independent of $L$. 
Here, $\lfloor u\rfloor$ stands for the largest integer not 
exceeding $u\in\R$. Clearly, this carries over to
symbols $A\in C^{\infty}_{b}(\R^{d} \times \R^{d}, \C^{n\times n})$ so that
\begin{align}\label{Operatorn-Norm-Estimate-N}
\|\Op_L^t(A)\|\leq C \mathbf{N}^{(m,d+1)}(A) < \infty
\end{align}
for both $t\in\{l,r\}$.
\end{lem}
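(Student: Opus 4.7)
The plan is to reduce the matrix-valued statement to the scalar estimate of Sobolev \cite[Chap.~3]{sobolevlong} entry-by-entry, using the canonical matrix basis $\{E_{\nu\mu}\}_{\nu,\mu=1}^{n}$. First I would expand the amplitude as in \eqref{Matrix-Tensor-Product},
\begin{equation*}
  A = \sum_{\nu,\mu=1}^{n} (A)_{\nu\mu} \otimes E_{\nu\mu},
\end{equation*}
and observe that the amplitude-quantisation \eqref{amplitude-quant} is linear and respects this tensor decomposition, so
\begin{equation*}
  \Op_{L}^{lr}(A) = \sum_{\nu,\mu=1}^{n} \Op_{L}^{lr}\big((A)_{\nu\mu}\big) \otimes E_{\nu\mu}.
\end{equation*}

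Next I would apply the triangle inequality together with the fact that $\|T\otimes M\| = \|T\|\,\|M\|$ for the norm on $L^{2}(\R^{d})\otimes\C^{n}$ and that $\|E_{\nu\mu}\|=1$. This yields
\begin{equation*}
  \|\Op_{L}^{lr}(A)\| \le \sum_{\nu,\mu=1}^{n} \big\|\Op_{L}^{lr}\big((A)_{\nu\mu}\big)\big\|.
\end{equation*}
The scalar amplitudes $(A)_{\nu\mu}$ lie in $C^{\infty}_{b}(\R^{d}\times\R^{d}\times\R^{d})$, so the corresponding scalar version of the estimate, as established in \cite[Chap.~3]{sobolevlong}, gives
\begin{equation*}
  \big\|\Op_{L}^{lr}\big((A)_{\nu\mu}\big)\big\| \le C \, \mathbf{N}^{(m,m,d+1)}\big((A)_{\nu\mu}\big)
\end{equation*}
with $m = \lfloor d/2\rfloor + 1$ and a constant $C$ independent of $L\ge 1$.

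The only remaining point is to compare the scalar and matrix symbol norms. For any multi-indices $\alpha,\beta,\gamma$ with $|\alpha|,|\beta|\le m$, $|\gamma|\le d+1$ and any $(x,y,\xi)\in\R^{3d}$, the entry bound
\begin{equation*}
  \big|\partial_{x}^{\alpha}\partial_{y}^{\beta}\partial_{\xi}^{\gamma} (A)_{\nu\mu}(x,y,\xi)\big|
  \le \big\|\partial_{x}^{\alpha}\partial_{y}^{\beta}\partial_{\xi}^{\gamma} A(x,y,\xi)\big\|_{\C^{n\times n}}
  \le \tr_{\C^{n}}\big|\partial_{x}^{\alpha}\partial_{y}^{\beta}\partial_{\xi}^{\gamma} A(x,y,\xi)\big|
\end{equation*}
holds, since any matrix entry is bounded by the operator norm which in turn is bounded by the trace of the absolute value. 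Taking the supremum and maximising over multi-indices delivers $\mathbf{N}^{(m,m,d+1)}((A)_{\nu\mu}) \le \mathbf{N}^{(m,m,d+1)}(A)$, so summing over $\nu,\mu$ produces the asserted bound with the constant $C$ in the lemma replaced by $n^{2}C$, which is still independent of $L$. The symbol-case estimate \eqref{Operatorn-Norm-Estimate-N} then follows by specialising $A$ to be independent of $y$ or $x$.

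I do not expect any real obstacle here: the scalar boundedness statement is the only nontrivial ingredient and it is imported from \cite{sobolevlong}; the only thing to be careful about is the book-keeping between the matrix-trace convention in Definition~\ref{defn-N-Norm-Matrix} and the plain absolute value used in the scalar norm.
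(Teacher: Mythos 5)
Your proposal is correct and follows essentially the same route as the paper: expand $A$ in the matrix units via \eqref{Matrix-Tensor-Product}, use linearity and the triangle inequality, invoke the scalar bound from \cite[Lemma 3.9]{sobolevlong} entrywise, and control the scalar symbol norms of the entries by the matrix-trace norm $\mathbf{N}^{(m,m,d+1)}(A)$ (the bound $|(M)_{\nu\mu}|\le\tr_{\C^n}|M|$, which the paper also uses). The only difference is that you make the entry-versus-trace comparison explicit, which the paper leaves implicit; there is no gap.
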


\begin{proof}
We use (\ref{Matrix-Tensor-Product}) and linearity to write 
\begin{align}
\|\Op_L^{lr}(A)\|=\bigg\|\sum_{\nu,\mu=1}^n \Op_L^{lr}\big( (A)_{\nu\mu} \big) \otimes E_{\nu\mu}\bigg\|\leq \sum_{\nu,\mu=1}^n \big\|\Op_L^{lr}\big((A)_{\nu\mu}\big)\big\|.
\end{align}
By \cite[Lemma 3.9]{sobolevlong} we estimate for every $\nu,\mu \in\{1,\ldots,n\}$
\begin{align}
\big\|\Op_L^{lr}\big((A)_{\nu\mu}\big)\big\|\leq C \mathbf{N}^{(m,m,d+1)}\big((A)_{\nu\mu}\big)
\end{align}
in terms of the symbol norm \eqref{Definition-N-Matrix} for $n=1$. This concludes the proof.
\end{proof}

%%%%%%%%%%%%%%%%%%%%%%%%%%%%%%%%%%%%%%%%%%%%%%%%%%%%%%%%%%%%%%%%%%%%%%%

\subsection{Commutation results}
\label{subsec:commute}

As usual we want to first prove the asymptotics for monomials $g$. We are therefore faced with the task of calculating
\begin{align}
\tr_{L^2(\R^d)\otimes\C^n}\big[\big(T_L(A)\big)^p\big]=\tr_{L^2(\R^d)\otimes\C^n}\left[\Big(\mathbf{1}_\Lambda \Op_L(\mathbf{1}_\Gamma) \Op_L^l(A) \Op_L(\mathbf{1}_\Gamma) \mathbf{1}_\Lambda\Big)^p\right]
\end{align}
for $p\in\N.$ 
In order to reduce this to the scalar-valued case, we need to deal with the matrix structure. Our first step will be to derive some results allowing the commutation of $\Op_L^t(A)$, $t\in\{l,r\}$ with both $\mathbf{1}_\Lambda$ and $\Op_L(\mathbf{1}_\Gamma)$ up to area terms in $L$. 

\begin{defn}
Let $\mathcal{H}$ be a separable Hilbert space. For $q\in \,]0,\infty[\,$ we define the \emph{Schatten--von Neumann class} $\mathcal{T}_q$ as the vector space of all compact (linear) operators $X$ on $\mathcal{H}$ with singular values $s_{k}(X) \in [0,\infty[\,$, $k\in\N$, such that 
\begin{equation}\label{def-q-norm}
\|X\|_q:=\Big(\sum_{k=1}^{\infty}s_{k}(X)^{q}\Big)^{\tfrac{1}{q}} < \infty.
\end{equation}
Definition \eqref{def-q-norm} induces a norm on $\mathcal{T}_q$ for $q\in [1,\infty[\,$. For $q\in \,]0,1[\,$ it induces a quasi-norm for which the $q$-triangle inequality
\begin{equation}
\|X+Y\|_{q}^{q}\leq \|X\|_{q}^{q} + \|Y\|_{q}^{q}
\end{equation}
holds for all $X,Y\in\mathcal{T}_q$.
\end{defn}
\begin{defn}
For $q\in \,]0,1]$ we write $X_L\sim_q Y_L$ for two $L$-dependent operators $X_L,Y_L\in \mathcal{T}_q$ of the corresponding Schatten--von Neumann class $\mathcal{T}_q$ over $L^2(\R^d)\otimes\C^n$, if there exists $C > 0$ such that $\| X_L-Y_L\|_q^q\leq C L^{d-1}$ for all $L\ge 1$. We further write $\sim$ for $\sim_1$. 
\end{defn}

We start with a result that requires more restrictive conditions on the symbol. In the scalar case with the symbol $a$ being compactly supported in both variables and both $\Lambda$ and $\Gamma$ being bounded admissible domains or basic domains, the desired commutation properties were already established in \cite{sobolevschatten}. Our first step will be to extend these properties to matrix-valued symbols $A$ and to general (potentially unbounded) admissible domains $\Lambda$ and $\Gamma$. While the extension to matrix-valued symbols is quite straightforward, the extension to unbounded domains takes more effort. However, this is needed to obtain less strict requirements in the main results of this paper, see Remark \ref{Remark-Gamma-GammaC}.

\begin{lem}\label{Commutation-Lemma}
Let $A\in C^{\infty}_{b}(\R^{d} \times \R^{d}, \C^{n\times n})$ be a matrix-valued symbol with compact support in both variables. 
Let $\Lambda$ and $\Gamma$ be admissible domains. Then for every $q\in \,]0,1]$ and 
$t\in\{l,r\}$ the commutators obey
\begin{align}\label{Commutation-Lambda-Matrix}
\big[\Op_L^t(A),\mathbf{1}_\Lambda\big] \sim_q 0
\end{align}
and
\begin{align}\label{Commutation-Gamma-Matrix}
\big[\Op_L^t(A),\Op_L(\mathbf{1}_\Gamma)\big] \sim_q 0.
\end{align}
\end{lem}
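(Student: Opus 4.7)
The plan is to reduce to the scalar case with bounded domains already covered by \cite{sobolevschatten} in two separate steps. First, expand $A = \sum_{\nu,\mu=1}^n (A)_{\nu\mu}\otimes E_{\nu\mu}$ as in (\ref{Matrix-Tensor-Product}). Since commutation distributes over the tensor factor and $\|T\otimes E_{\nu\mu}\|_q = \|T\|_q$, the $q$-quasi-triangle inequality bounds the matrix commutator quasinorm by a finite sum of scalar ones. Each entry $(A)_{\nu\mu}\in C^\infty_b(\R^d\times\R^d)$ inherits the joint compact support of $A$, so the task reduces to proving $[\Op_L^t(a),1_\Lambda]\sim_q 0$ and $[\Op_L^t(a),\Op_L(1_\Gamma)]\sim_q 0$ for scalar symbols $a$ with compact support in both variables and arbitrary (possibly unbounded) admissible domains $\Lambda,\Gamma$.

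For the first commutator I would exploit the compact $x$-support of $a$. Pick an open ball $B$ with $\supp_x a \subset \Int B$; then $1_{B^c}\Op_L^l(a)=0$, because the spatial range of $\Op_L^l(a)$ is contained in $\supp_x a$. Splitting $1_\Lambda=1_{\Lambda\cap B}+1_{\Lambda\cap B^c}$ produces
\begin{align*}
[\Op_L^l(a),1_\Lambda] = [\Op_L^l(a),1_{\Lambda\cap B}] + \Op_L^l(a)\,1_{\Lambda\cap B^c}.
\end{align*}
The set $\Lambda\cap B$ is a \emph{bounded} admissible domain: admissibility is a local property by Definition~\ref{Definition-Admissible-Domain}, and at a corner point of $\partial\Lambda\cap\partial B$ the intersection can still be realised as the graph of a Lipschitz function after a suitable rotation of coordinates. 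Hence $[\Op_L^l(a),1_{\Lambda\cap B}]\sim_q 0$ by the scalar bounded-domain result of \cite{sobolevschatten}. For the remainder, the kernel $K_L(x,y)\,1_{\Lambda\cap B^c}(y)$ has support on which $|x-y|\ge\dist(\supp_x a,B^c)>0$; repeated integration by parts in $\xi$, legitimate thanks to the compact $\xi$-support of $a$, gives $|K_L(x,y)|\le C_N L^{d-N}|x-y|^{-N}$ for every $N$, from which a standard trace-class estimate yields $\|\Op_L^l(a)\,1_{\Lambda\cap B^c}\|_1 \le C_M L^{-M}$ for every $M$, more than enough to be $\sim_q 0$. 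The case $t=r$ follows by taking adjoints, since $1_\Lambda$ is self-adjoint.

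The second commutator is treated analogously using the compact $\xi$-support of $a$. Pick a ball $K_\xi$ with $\supp_\xi a\subset\Int K_\xi$. The pseudodifferential identity $\Op_L^l(a)\,\Op_L(b)=\Op_L^l(a\cdot b)$ for pure momentum multipliers $b$ gives $\Op_L^l(a)\,\Op_L(1_{K_\xi^c})=0$, so splitting $1_\Gamma=1_{\Gamma\cap K_\xi}+1_{\Gamma\cap K_\xi^c}$ yields
\begin{align*}
[\Op_L^l(a),\Op_L(1_\Gamma)] = [\Op_L^l(a),\Op_L(1_{\Gamma\cap K_\xi})] - \Op_L(1_{\Gamma\cap K_\xi^c})\,\Op_L^l(a).
\end{align*}
The first term is $\sim_q 0$ by the scalar bounded-domain result, since $\Gamma\cap K_\xi$ is again a bounded admissible domain. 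For the remainder $\Op_L(1_{\Gamma\cap K_\xi^c})\Op_L^l(a)$, conjugation with the Fourier transform interchanges the roles of position and momentum and reduces the bound to a kernel estimate of the same type as in the previous paragraph, the key fact being that the partial Fourier transform $\hat a_1(\eta-L\xi,\xi)$ is of order $L^{-N}$ for $\eta/L\in K_\xi^c$ and $\xi\in\supp_\xi a$. I expect the main obstacle to lie here and in the corresponding corner argument: carefully verifying that $\Lambda\cap B$ and $\Gamma\cap K_\xi$ fit the admissibility framework required in \cite{sobolevschatten}, and phrasing the Fourier-side estimate so that it cleanly produces an $O(L^{-M})$ bound in Schatten quasinorm rather than just operator norm.
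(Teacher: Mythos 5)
Your matrix-to-scalar reduction via \eqref{Matrix-Tensor-Product} and \eqref{q-factorization} is exactly the paper's first step, and your adjoint trick for $t=r$ is legitimate for the commutator. The genuine gap lies in the truncation step. You replace the unbounded admissible domain $\Lambda$ by $\Lambda\cap B$ and assert that this is again a bounded admissible domain, "after a suitable rotation of coordinates" at points of $\partial\Lambda\cap\partial B$. This is false in general: if $\partial B$ touches $\partial\Lambda$ tangentially from the appropriate side, the intersection develops an outward cusp, and a cusp domain cannot be locally represented as $\{x_d>\Phi(x_1,\dots,x_{d-1})\}$ with $\Phi$ Lipschitz (the slices in every direction degenerate to short intervals, so no rotation helps). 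A concrete example: $\Lambda=\{(x,y)\in\R^2: y>-x^2\}$ and $B$ the ball of radius $r>\tfrac12$ centred at $(0,-r)$ give, near the origin, $\Lambda\cap B=\{-x^2<y<-x^2/(2r)+O(x^4)\}$, which is not admissible at $0$. Since $\partial\Lambda$ is merely Lipschitz, choosing $B$ (or $K_\xi$ on the momentum side) so as to avoid all such tangencies is not a routine matter and is nowhere justified in your argument; the same objection applies verbatim to $\Gamma\cap K_\xi$. The paper's proof avoids this entirely: it never intersects $\Lambda$ with a ball as a \emph{domain}, but instead covers $\overline{\wtilde\Lambda\cap B_R(0)}$ with small balls on which $\Lambda$ coincides with basic domains $\Lambda_j$, inserts a subordinate partition of unity and cut-offs $h_j$, and controls all error terms by the separated-support Schatten estimate \cite[Thm.\ 3.2]{sobolevschatten}, so that only the basic-domain result \cite[Thm.\ 4.2, Rem.\ 4.3]{sobolevschatten} is ever invoked.

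A second, smaller flaw: for the remainder terms $\Op_L^l(a)1_{\Lambda\cap B^c}$ and $\Op_L(1_{\Gamma\cap K_\xi^c})\Op_L^l(a)$ you conclude a bound on the \emph{trace} norm and call it "more than enough to be $\sim_q 0$". For $q<1$ this is backwards: $\|X\|_1\le\|X\|_q$, so smallness in $\mathcal T_1$ gives no control of $\|X\|_q^q$, which is what $\sim_q$ requires. The estimate you want does hold, but it must be proved directly in the Schatten--von Neumann class $\mathcal T_q$ (this is precisely what \cite[Thm.\ 3.2]{sobolevschatten} provides for symbols/cut-offs with separated supports), not deduced from a trace-norm bound. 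So the kernel-decay idea is salvageable, but the step as written does not establish the claimed $\sim_q 0$ relations, and the domain-truncation step needs to be replaced by a localisation argument of the type used in the paper.
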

\begin{rem}
Suppose that in addition to the requirements in Lemma \ref{Commutation-Lemma} one has
\begin{equation}
\supp A\subseteq B_{s}(u)\times B_{\tau}(v)
\end{equation}
for some centres $u,v\in\R^d$ and radii $s,\tau>0$ such that $s\geq 1$ and  $L\tau\geq 1$. Then in the scalar-valued case Theorem 4.2 and Remark 4.3 in \cite{sobolevschatten} provide a more precise estimate for the commutators in Lemma \ref{Commutation-Lemma} depending on the radii $s$ and $\tau$ of the support of the symbol $A$. As the proof of Lemma \ref{Commutation-Lemma} will show this estimate extends to the matrix-valued case considered in Lemma \ref{Commutation-Lemma} in the following way: Let $m_{x}:=\big\lfloor \frac{d}{q}\big\rfloor+1$ and $m_{\xi}:=\big\lfloor \frac{d+1}{q}\big\rfloor+1$. Then there exists a constant $C$ which only depends on $q$ and $\Lambda$ such that 
\begin{align}
\big\|[\Op_L^t(A),1_\Lambda]\big\|_q^q\leq C(Ls\tau)^{d-1}\big(\mathbf{N}^{(m_{x},m_{\xi})}(A;s,\tau)\big)^{q},
\end{align}
where $\mathbf{N}^{(m_{x},m_{\xi})}(A;s,\tau):=\max_{\substack{|\alpha|\leq m_{x} \\ |\beta|\leq m_{\xi}}} 
		\;\sup_{x,\xi\in\R^{d}}s^{|\alpha|}\tau^{|\beta|}\tr_{\C^n} \big|\partial_x^{\alpha} \partial_\xi^{\beta} A(x,\xi)\big|$.
\end{rem}
\begin{proof}
Let $L >0$. We first want to reduce $(\ref{Commutation-Lambda-Matrix})$ to the scalar-valued case. We 
use \eqref{Matrix-Tensor-Product} and estimate with the triangle inequality
\begin{align}\label{Triangle-Commutation-Lemma}
	\big\|\big[\Op_L^t(A),\mathbf{1}_\Lambda\big]\big\|_q^q  
	%&=\bigg\|\bigg[\sum_{l,m=1}^n \Op_L^t\big((A)_{lm}\big)\otimes E_{lm}\, ,1_\Lambda\otimes \mathbb{1}_{n}\bing]\bigg\|_q^q & \nonumber \\  &
	\leq \sum_{\nu,\mu=1}^n \big\| \big[\Op_L^t\big((A)_{\nu\mu}\big),1_\Lambda\big]\otimes E_{\nu\mu}\big\|_q^q.
\end{align}
We now want to see that each individual term factorises in the tensor product. To do so, let $X\in\mathcal{T}_q$ be an operator on $L^2(\R^d)$ in the Schatten-von Neumann class corresponding to $q$ and $M\in\C^{n \times n}$ be a matrix. 
Using that $\vert X\otimes M\vert^q = \vert X\vert^q \otimes \vert M \vert^q$, we get
\begin{equation}\label{q-factorization}
	\| X\otimes M\|_q^q 
	= \tr_{L^2(\R^d)\otimes\C^n}\big[\vert X\vert^q \otimes \vert M \vert^q \big]
		%=\tr_{L^2(\R^d)\otimes\C^n}\big[\vert T\vert^q \otimes \vert M\vert^q\big] %\nonumber \\
		= \big(\tr_{L^2(\R^d)}\big[\vert X\vert^q\big] \big) \big(\tr_{\C^n}\big[\vert M \vert^q\big]\big) 
	=\| X\|_q^q \| M\|_q^q.
\end{equation}
We warn the reader that the above equation involves three different Schatten--von Neumann-$q$-norms, on $L^2(\R^d)\otimes\C^n$, on $L^2(\R^d)$ and on $\C^n$ from left to right.
With this at hand we get for each term in the last line of (\ref{Triangle-Commutation-Lemma})
\begin{align}
\big\| \big[\Op_L^t\big((A)_{\nu\mu}\big),1_\Lambda\big]\otimes E_{\nu\mu}\big\|_q^q
= \big\|\big[\Op_L^t\big((A)_{\nu\mu}\big),1_\Lambda\big]\big\|_q^q,
\end{align} 
which is the scalar case. 

If $\Lambda$ is a basic domain, Thm.\ 4.2 and Rem.\ 4.3 in \cite{sobolevschatten} yield constants $C_{\nu\mu}$, which only depend on $q$ and $\Lambda$, such that 
\begin{equation}\label{Commutator-Application-Basic-Domain}
\big\|[\Op_L^t((A)_{\nu\mu}),1_\Lambda]\big\|_q^q\leq C_{\nu\mu}(Ls\tau)^{d-1}\big(\mathbf{N}^{(m_{x},m_{\xi})}(A;s,\tau)\big)^{q}
\end{equation}
for all $\nu,\mu\in\{1,\ldots ,n\}$. Here we used that $|M_{\nu\mu}|\leq \tr |M|$ for every matrix element $M_{\nu\mu}$ of a matrix $M$.
Hence,
\begin{align}\label{Commutator-Result-Finish}
\big\| \big[\Op_L^t(A),\mathbf{1}_\Lambda\big]\big\|_q^q \leq C(Ls\tau)^{d-1}\big(\mathbf{N}^{(m_{x},m_{\xi})}(A;s,\tau)\big)^{q},
\end{align}
where $C:=\sum_{\nu,\mu=1}^{n} C_{\nu\mu}$ only depends on $q$ and $\Lambda$. Therefore, it remains to show that the scalar result \eqref{Commutator-Application-Basic-Domain} extends to arbitrary admissible domains $\Lambda$. It suffices to show 
\begin{align}\label{Pre-Commutator-Result}
\big\| 1_\Lambda \Op_L^t(a) (1-1_\Lambda)\big\|_q^q \leq C(Ls\tau)^{d-1}\big(\mathbf{N}^{(m_{x},m_{\xi})}(a;s,\tau)\big)^{q},
\end{align}
for an arbitrary scalar symbol $a\in C^{\infty}_{b}(\R^{d} \times \R^{d})$ with compact support in both variables, as the estimate for the commutator follows from (\ref{Pre-Commutator-Result}) together with its adjoint and the fact that (\ref{Pre-Commutator-Result}) holds for both the left and the right operator.

In order to prove (\ref{Pre-Commutator-Result}) define 
$\wtilde\Lambda:=\{x\in\R^d : \dist(x,\Lambda)<s\}$ and let $R>0$ such that the support of $a$ in the 
first (i.e.\ space) variable is contained in $B_R(0)$. As $\Lambda$ is admissible and 
$\overline{\wtilde\Lambda\cap B_R(0)}$ is compact, we can cover $\overline{\wtilde\Lambda\cap B_R(0)}$ 
with balls $B_\rho(x_j)$ with radius $\rho>0$ and centres $x_j \in\R^{d}$, where $j\in \mathcal{J} \subset\N$ runs through some finite index set.  
The balls are chosen such that $\Lambda\cap B_{4\rho}(x_j)=\Lambda_j\cap B_{4\rho}(x_j)$ for every $j\in\mathcal{J}$, where $\Lambda_j$ is a basic 
domain. We introduce a smooth partition of unity $\{\phi_j\}_{j\in\mathcal{J}}$ in $\R^{d}$ with 
$\supp\phi_j\subset B_\rho(x_j)$ for $j\in\mathcal{J}$ and  
\begin{align}
	\Phi\big{|}_{\overline{\wtilde\Lambda\cap B_R(0)}}=1, \qquad \text{where} \quad 
	\Phi := \sum_{j\in\mathcal{J}} \phi_{j}
\end{align}
as well as
\begin{equation}\label{Lemma-1-derivative-phi}
\sup_{x\in\R^d}|\partial_{x}^{\alpha}\phi_{j}(x)|\leq C\rho^{-|\alpha|}, \quad \alpha\in\N_{0}^{d},
\end{equation}
where the constant $C$ does not depend on $\rho$.
We start with the easier case $t=l$ and estimate
\begin{equation}
	\label{Lemma-1-Triangle-Partition}
	\big\|1_\Lambda \Op_L^l(a) (1-1_\Lambda)\big\|_q^q
	= \big\|1_\Lambda \Phi \Op_L^l(a) (1-1_\Lambda)\big\|_q^q 
	\leq \sum_{j\in\mathcal{J}} \big\|1_{\Lambda_j} \phi_j \Op_L^l(a) (1-1_\Lambda) \big\|_q^q.
\end{equation}
As the sum is finite, we just need to estimate the individual terms. We already have a basic domain 
to the left of the symbol. Therefore, it remains to also replace the occurrence of $1_\Lambda$ 
on the right-hand side by $1_{\Lambda_j}$ in order to complete the reduction.

For $j\in\mathcal{J}$, let $h_j\in C^\infty(\R^d)$ be a smooth function such that $\|h_j\|_\infty\leq 1$, $\supp(h_j)\subset B_{4\rho}(x_j)$ and $h_j|_{B_{2\rho}(x_j)}=1$. We obtain
\begin{align}
	\label{Lemma-1-Thm3.2}
	\big\|1_{\Lambda_j} \phi_j \Op_L^l(a) (1-1_\Lambda)\big\|_q^q
	&= \big\|1_{\Lambda_j} \phi_j \Op_L^l(a)(h_j+1-h_j) (1-1_\Lambda) \big\|_q^q \nonumber \\ 
	&\leq \big\|1_{\Lambda_j} \phi_j \Op_L^l(a) h_j(1-1_{\Lambda_j}) \big\|_q^q 
		+ \big\|\phi_j \Op_L^l(a) (1-h_j) \big\|_q^q \nonumber \\ 
	&\leq \big\|1_{\Lambda_j}  \Op_L^l(\phi_j a) (1-1_{\Lambda_j}) \big\|_q^q \notag\\
	& \quad + C(L\rho\tau)^{d-m_{\xi}q}\big(\mathbf{N}^{(m_{x},m_{\xi})}(\phi_j a;\rho,\tau)\big)^{q},
\end{align}
with a constant $C$ which only depends on $q$. Here we used \cite[Thm.\ 3.2]{sobolevschatten} with $\alpha=L$, $\ell_{0}=\ell=R=\rho$, $h_1=1_{B_{\rho}(x_j)}$, $h_2=1-h_j$ and $a=\phi_j a$ in the last step. This is possible, as the distance of the supports of $\phi_j$ and $1-h_j$ is at least $\rho$ by construction. Above, we used the notation $\Op_L^l(\phi_j a)$ which is abusive, as it does not specify the variable the function $\phi_j$ depends on. However, it should be still clear from the context and the definition of the function $\phi_j$ that the symbol $\phi_j a$ should be interpreted as 
$\phi_j a: \R^{d}\times\R^{d}\ni (x,\xi)\mapsto\phi_{j}(x)a(x,\xi)$. 

As $\rho$ only depends on $\Lambda$, the bound \eqref{Lemma-1-derivative-phi} allows us to estimate the second term in the last line of \eqref{Lemma-1-Thm3.2} from above by
\begin{align}\label{Lemma-1-Thm3.2-Estimate-2}
C(L\tau)^{d-m_{\xi}q}\big(\mathbf{N}^{(m_{x},m_{\xi})}(a;1,\tau)\big)^{q}\leq C (Ls\tau)^{d-1}\big(\mathbf{N}^{(m_{x},m_{\xi})}(a;s,\tau)\big)^{q},
\end{align}
where the constant $C$ now only depends on $q$ and $\Lambda$ and we used that $L\tau\geq 1$, $m_{\xi}q>d+1>1$
as well as $s\geq 1$ in the last inequality. The estimates \eqref{Lemma-1-Triangle-Partition}, \eqref{Lemma-1-Thm3.2} combined with \eqref{Lemma-1-Thm3.2-Estimate-2} and \eqref{Commutator-Application-Basic-Domain} yield \eqref{Pre-Commutator-Result} for $t=l$.

For the more complicated case $t=r$ write 
\begin{align}
	\big\|1_\Lambda \Op_L^r(a) (1-1_\Lambda) \big\|_q^q
	&= \big\| 1_\Lambda  \Op_L^r(a)(\Phi + 1 - \Phi) (1-1_\Lambda) \big\|_q^q  \notag \\  
	&\leq \sum_{j\in\mathcal{J}} \big\| 1_\Lambda  \Op_L^r(a) \phi_j (1-1_{\Lambda_j}) \big\|_q^q
		+ \big\| 1_\Lambda  \Op_L^r(a)1_{B_R(0)}(1 - \Phi) \big\|_q^q.
\end{align}
Note that $\supp (1-\Phi) \subset \big(\,\overline{\wtilde\Lambda \cap B_R(0)}\,\big)^c \subset  
	\wtilde\Lambda^c \cup \big(B_R(0)\big)^c$ and therefore 
	$\supp \big(1_{B_R(0)}(1-\Phi)\big)\subset \wtilde\Lambda^c$. Hence, the distance between the supports 
	of $1_\Lambda$ and $1_{B_R(0)}(1-\Phi)$ is at least $s$ and \cite[Thm.\ 3.2]{sobolevschatten} yields 
\begin{align}
	\big\| 1_\Lambda \Op_L^r(a) (1-1_\Lambda) \big\|_q^q 
	\leq \sum_{j\in\mathcal{J}} \big\| 1_\Lambda  \Op_L^r(a) \phi_j (1-1_{\Lambda_j}) \big\|_q^q
		+ C(Ls\tau)^{d-m_{\xi}q}\big(\mathbf{N}^{(m_{x},m_{\xi})}(a;s,\tau)\big)^{q}.
\end{align}
From here on we continue similarly to the case of the left operator by inserting $h_j+(1-h_j)$ to 
the left of $\Op_L^r(a)$. This gives (\ref{Pre-Commutator-Result}) for $t=r$. 
Relation $(\ref{Commutation-Gamma-Matrix})$ follows in the same way by interchanging the roles 
of the variables $x$ and $\xi$.
\end{proof}

If we have a symbol with compact support in both variables, there are several useful estimates for the left and right operators in the trace norm \cite[Chap.\ 3]{sobolevlong}. We also want to generalise these to the matrix-valued case.

\begin{lem}
	\label{Exchange-Lemma}
	Let $A,B\in C^{\infty}_{b}(\R^{d} \times \R^{d}, \C^{n\times n})$ be matrix-valued symbols and 
	$F\in C^{\infty}_{b}(\R^{d} \times \R^{d} \times \R^{d}, \C^{n\times n})$ a matrix-valued amplitude. 
	We assume that $B$  has compact support in both variables 
	and $F$ is compactly supported in $\xi$ and in at least one of the variables $x$ or $y$. 
	We write $D$ for the symbol given by $D(x,\xi):= F(x,x,\xi)$ for $x,\xi\in\R^{d}$. Then, 
	for $t\in\{l,r\}$ we have
	\begin{align}
		\label{Left-Right}
		\Op_L^l(B)\sim \Op_L^r(B)  
	\end{align}
	and 
	\begin{align}
		\label{Left-LeftRight}
		\Op_L^{lr}(F)\sim \Op_L^l(D).
	\end{align}
	Further, we have
	\begin{align}
		\label{Merge-Symbols}
		\Op_L^t(A)\Op_L^t(B) \sim \Op_L^t(AB)
	\end{align}
	and
	\begin{align}
		\label{Merge-Symbols-2}
		\Op_L^t(B)\Op_L^t(A) \sim \Op_L^t(BA).
	\end{align}
\end{lem}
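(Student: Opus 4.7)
The plan is to reduce each of the four relations to the corresponding scalar statements in \cite[Chap.\ 3]{sobolevlong} via the matrix decomposition \eqref{Matrix-Tensor-Product}, combined with the tensor factorisation \eqref{q-factorization} specialised to $q=1$. Since $\sim$ is stable under finite sums up to constants, it will suffice to control each scalar matrix entry separately.

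For \eqref{Left-Right} and \eqref{Left-LeftRight} I would write
\begin{align*}
	\Op_L^l(B) - \Op_L^r(B) = \sum_{\nu,\mu=1}^{n} \bigl[\Op_L^l((B)_{\nu\mu}) - \Op_L^r((B)_{\nu\mu})\bigr] \otimes E_{\nu\mu},
\end{align*}
and analogously $\Op_L^{lr}(F) - \Op_L^l(D) = \sum_{\nu,\mu} [\Op_L^{lr}((F)_{\nu\mu}) - \Op_L^l((D)_{\nu\mu})] \otimes E_{\nu\mu}$, noting that $(D)_{\nu\mu}(x,\xi) = (F)_{\nu\mu}(x,x,\xi)$. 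The scalar support and smoothness hypotheses are inherited by each matrix entry, so after the triangle inequality in the trace norm the factorisation \eqref{q-factorization} together with $\|E_{\nu\mu}\|_1 = 1$ turns each summand into a scalar $\sim$ statement, which is the content of the corresponding result in \cite[Chap.\ 3]{sobolevlong}.

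The products \eqref{Merge-Symbols} and \eqref{Merge-Symbols-2} need a small amount of bookkeeping. Using $E_{\nu\mu} E_{\alpha\beta} = \delta_{\mu\alpha} E_{\nu\beta}$, I expand
\begin{align*}
	\Op_L^t(A)\Op_L^t(B) = \sum_{\nu,\mu,\beta=1}^{n} \Op_L^t\bigl((A)_{\nu\mu}\bigr)\, \Op_L^t\bigl((B)_{\mu\beta}\bigr) \otimes E_{\nu\beta},
\end{align*}
while $(AB)_{\nu\beta} = \sum_{\mu} (A)_{\nu\mu}(B)_{\mu\beta}$ yields
\begin{align*}
	\Op_L^t(AB) = \sum_{\nu,\mu,\beta=1}^{n} \Op_L^t\bigl((A)_{\nu\mu}(B)_{\mu\beta}\bigr) \otimes E_{\nu\beta}.
\end{align*}
Subtracting term by term reduces the claim to the scalar identity $\Op_L^t((A)_{\nu\mu})\Op_L^t((B)_{\mu\beta}) \sim \Op_L^t((A)_{\nu\mu}(B)_{\mu\beta})$, which is exactly the scalar merge result from \cite[Chap.\ 3]{sobolevlong}: the factor $(B)_{\mu\beta}$ inherits the compact support of $B$, and $(A)_{\nu\mu}$ inherits the boundedness of all derivatives of $A$. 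Relation \eqref{Merge-Symbols-2} follows by the same calculation with the roles of $A$ and $B$ interchanged.

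Overall, no step here is genuinely hard: the only point requiring attention is verifying in each case that the scalar hypotheses of the cited results survive entrywise projection, which they do directly. In contrast to Lemma \ref{Commutation-Lemma}, there is no need to extend the scalar estimates to unbounded domains, because the compact support assumption on $B$ (respectively $F$) is preserved at the matrix-entry level and the scalar trace-class bounds of \cite[Chap.\ 3]{sobolevlong} apply unchanged.
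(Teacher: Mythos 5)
Your proposal is correct and follows essentially the same route as the paper: entrywise decomposition via \eqref{Matrix-Tensor-Product}, the trace-norm factorisation \eqref{q-factorization} with $q=1$, and reduction to the scalar trace-class results of \cite[Chap.~3]{sobolevlong}. The only cosmetic differences are that the paper obtains \eqref{Left-Right} as the special case $F(x,y,\xi)=B(y,\xi)$ of \eqref{Left-LeftRight} and remarks explicitly that the $t=r$ case of the scalar merge estimate follows from the $t=l$ case by taking adjoints, neither of which changes the argument.
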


\noindent
In the proof of Lemma~\ref{Exchange-Lemma} we provide more information on the constants arising in \eqref{Left-Right} -- \eqref{Merge-Symbols-2}, see \eqref{Left-Right-Precise}, \eqref{Left-LeftRight-Precise}, \eqref{Merge-Symbols-Precise} and
\eqref{Merge-Symbols-Precise-2}.

\begin{proof}[Proof of Lemma~\ref{Exchange-Lemma}]
Let $L \ge 1$. 
In order to reduce $(\ref{Left-LeftRight})$ to the scalar case, we again use $(\ref{Matrix-Tensor-Product})$ and $(\ref{q-factorization})$. The estimates in the scalar case can be found in \cite[Lemma 3.12]{sobolevlong}. Note that this Lemma even provides the estimate 
\begin{align}
	\big\| \Op_L^{lr}\big((F)_{\nu\mu}\big)-\Op_L^l\big((D)_{\nu\mu}\big)\big\|_1
 	\leq C L^{d-1}\mathbf{N}^{(d+1,d+1,d+2)}\big((F)_{\nu\mu}\big),
\end{align}
for every $\nu,\mu \in\{1,\ldots,n\}$, where the constant $C$ still  depends on the support of the 
amplitudes $(F)_{\nu\mu}$, but is otherwise independent of $(F)_{\nu\mu}$. Therefore we obtain 
\begin{align}\label{Left-LeftRight-Precise}
\| \Op_L^{lr}(F)- \Op_L^l(D)\|_1\leq C L^{d-1} \mathbf{N}^{(d+1,d+1,d+2)}(F)
\end{align}
in terms of the norm \eqref{Definition-N-Matrix} for matrix-valued amplitudes.
The (new) constant $C$ still depends on the support of the amplitude $F$, 
but is independent of $F$ otherwise. In the special case $F(x,y,\xi) = B(y,\xi)$ we obtain
\begin{align}\label{Left-Right-Precise}
\| \Op_L^{r}(B)- \Op_L^l(B)\|_1\leq C L^{d-1} \mathbf{N}^{(d+1,d+2)}(B),
\end{align}
which proves $(\ref{Left-Right})$.

For $(\ref{Merge-Symbols})$ we use $(\ref{Matrix-Tensor-Product})$ to write
\begin{align}
	\Op_L^t(A)\Op_L^t(B) 
	&= \bigg(\sum_{\nu,\mu=1}^{n} \Op_L^t\big((A)_{\nu\mu}\big)\otimes E_{\nu\mu}\bigg) 
		 \bigg(\sum_{\sigma,\tau=1}^{n} \Op_L^t\big((B)_{\sigma\tau}\big)\otimes E_{\sigma\tau}\bigg) \nonumber \\ 
	&= \sum_{\nu,\mu,\tau=1}^{n} \Big(\Op_L^t\big((A)_{\nu\mu}\big) \, \Op_L^t\big((B)_{\mu\tau}\big)\Big) \otimes E_{\nu\tau}
\end{align}
and
\begin{align}
	\Op_L^t(AB)=\sum_{\nu,\tau=1}^{n} \Op_L^t\big((AB)_{\nu\tau}\big) \otimes E_{\nu\tau}
	= \sum_{\nu,\mu,\tau=1}^{n} \Op_L^t\big((A)_{\nu\mu}(B)_{\mu\tau}\big) \otimes E_{\nu\tau}.
\end{align}
Combining these two equalities, the triangle inequality and $(\ref{q-factorization})$, we get
\begin{equation}
 	\big\| \Op_L^t(A)\Op_L^t(B) - \Op_L^t(AB) \big\|_1    
	\leq \sum_{\nu,\mu,\tau=1}^{n} \big\| \Op_L^t\big((A)_{\nu\mu}\big) \Op_L^t\big((B)_{\mu\tau}\big)  
			- \Op_L^t\big((A)_{\nu\mu}(B)_{\mu\tau}\big) \big\|_1. 
\end{equation}
The corresponding scalar estimates for $t=l$ are contained in \cite[Cor.\ 3.13]{sobolevlong}. The estimates for $t=r$ follow from taking the adjoint. Cor.\ 3.13 in \cite{sobolevlong} gives even a more precise estimate
\begin{multline}
	\big\| \Op_L^t\big((A)_{\nu\mu}\big) \Op_L^t((B)_{\mu\tau})  - \Op_L^t\big((A)_{\nu\mu}(B)_{\mu\tau}\big) \big\|_1  \\
	\leq C L^{d-1} \mathbf{N}^{(d+1,d+2)}\big((A)_{\nu\mu}\big) \, \mathbf{N}^{(d+1,d+2)}\big((B)_{\mu\tau}\big)
\end{multline}
for every $\nu,\mu,\tau \in\{1,\ldots,n\}$, where $C$ still depends on the support of the symbol $B$, 
but is independent of $B$ otherwise.
Therefore,
\begin{align}
	\label{Merge-Symbols-Precise}
 	\big\| \Op_L^t(A) \Op_L^t(B) - \Op_L^t(AB) \big\|_1 
	\leq C L^{d-1} \mathbf{N}^{(d+1,d+2)}(A)\, \mathbf{N}^{(d+1,d+2)}(B),
\end{align}
where $C$ again depends on the support of the symbol $B$, but is independent of $B$ otherwise. This proves \eqref{Merge-Symbols}. The proof of \eqref{Merge-Symbols-2} is analogous and yields 
\begin{align}
	\label{Merge-Symbols-Precise-2}
 	\big\| \Op_L^t(B) \Op_L^t(A) - \Op_L^t(BA) \big\|_1 
	\leq C L^{d-1} \mathbf{N}^{(d+1,d+2)}(A)\, \mathbf{N}^{(d+1,d+2)}(B). 
\end{align}
\end{proof}
In order to prove the asymptotic formula for polynomials it is convenient to localise the problem, i.e. to locally replace the admissible domains $\Lambda$ and $\Gamma$ by appropriate basic domains. The following lemma will be helpful in that regard.

\begin{lem}
	\label{Matrix-Commutation}
	Let $A,B\in C^{\infty}_{b}(\R^{d} \times \R^{d}, \C^{n\times n})$ be matrix-valued symbols and 
	assume that $B$ is compactly supported in both variables.
	Let $\Lambda$ and $\Gamma$ be 
	admissible domains. Further, let $\Lambda_0$ 
	and $\Gamma_0$ be basic domains such that
	\begin{align}
		B\big|_{\Lambda\times\R^d} = B\big|_{\Lambda_0\times\R^d} \qquad \text{and} \quad   
		B\big|_{\R^d\times\Gamma} = B\big|_{\R^d\times\Gamma_0}.
	\end{align}
 	For $p\in\N$ we have
	\begin{align}
		\label{Chain-Commutation}
		\Op_L^l(B) \big(T_L(A;\Lambda,\Gamma)\big)^p \sim \Op_L^l(BA^p) 
			\big( T_L(\mathbb{1}_{n};\Lambda_0,\Gamma_0) \big)^p.
	\end{align}
\end{lem}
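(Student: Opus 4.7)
The plan is to push $\Op_L^l(B)$ iteratively through $(T_L(A;\Lambda,\Gamma))^p$ from the left, absorbing one copy of $A$ at each pass and simultaneously converting the admissible-domain factors $\mathbf{1}_\Lambda$ and $\Op_L(\mathbf{1}_\Gamma)$ into their basic-domain counterparts $\mathbf{1}_{\Lambda_0}$ and $\Op_L(\mathbf{1}_{\Gamma_0})$. Three tools drive the argument. First, for any matrix-valued symbol $B'$ compactly supported in both variables, Lemma~\ref{Commutation-Lemma} (applied with $q=1$) yields $[\Op_L^l(B'),\mathbf{1}_\Lambda]\sim 0$ and $[\Op_L^l(B'),\Op_L(\mathbf{1}_\Gamma)]\sim 0$. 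Second, Lemma~\ref{Exchange-Lemma} provides the merging $\Op_L^l(B')\Op_L^l(A)\sim\Op_L^l(B'A)$. Third, the hypothesis on $B$ yields two \emph{exact} identities: $\mathbf{1}_\Lambda\Op_L^l(B)=\mathbf{1}_{\Lambda_0}\Op_L^l(B)$ (direct, since $\mathbf{1}_\Lambda B=\mathbf{1}_{\Lambda_0} B$ as symbols) and $\Op_L^l(B)\Op_L(\mathbf{1}_\Gamma)=\Op_L^l(B)\Op_L(\mathbf{1}_{\Gamma_0})$ (from the identity $\Op_L^l(B)\Op_L(\mathbf{1}_\Gamma)=\Op_L^l(B\mathbf{1}_\Gamma)$, verified by a brief Fourier computation, combined with $B\mathbf{1}_\Gamma=B\mathbf{1}_{\Gamma_0}$). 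Crucially, all three tools remain valid with $B$ replaced by $BA^k$ for any $k\ge 0$, since $BA^k$ inherits compact support in both variables and both restriction identities from $B$.

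Writing
\begin{equation*}
(T_L(A;\Lambda,\Gamma))^p = \mathbf{1}_\Lambda\Op_L(\mathbf{1}_\Gamma)\Op_L^l(A)\bigl[\Op_L(\mathbf{1}_\Gamma)\mathbf{1}_\Lambda\Op_L(\mathbf{1}_\Gamma)\Op_L^l(A)\bigr]^{p-1}\Op_L(\mathbf{1}_\Gamma)\mathbf{1}_\Lambda
\end{equation*}
by means of $\mathbf{1}_\Lambda^2=\mathbf{1}_\Lambda$, a single iteration consists of moving $\Op_L^l(BA^{k-1})$ (with $BA^0:=B$) through the block of indicators immediately to its right: each $\mathbf{1}_\Lambda$ (resp.\ $\Op_L(\mathbf{1}_\Gamma)$) is replaced by $\mathbf{1}_{\Lambda_0}$ (resp.\ $\Op_L(\mathbf{1}_{\Gamma_0})$) as soon as it becomes adjacent to the moving factor via the exact identities, commutations shuffle the factor past the indicator with a trace-norm error $O(L^{d-1})$, and a merging produces $\Op_L^l(BA^k)$ in place of $\Op_L^l(BA^{k-1})\Op_L^l(A)$. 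After $p$ iterations, collapsing the adjacent copies of $\Op_L(\mathbf{1}_{\Gamma_0})$ produced along the way via the projection identity $\Op_L(\mathbf{1}_{\Gamma_0})^2=\Op_L(\mathbf{1}_{\Gamma_0})$, one arrives at
\begin{equation*}
\Op_L^l(B)\bigl(T_L(A;\Lambda,\Gamma)\bigr)^p \sim \bigl[\mathbf{1}_{\Lambda_0}\Op_L(\mathbf{1}_{\Gamma_0})\bigr]^p\Op_L^l(BA^p)\Op_L(\mathbf{1}_\Gamma)\mathbf{1}_\Lambda.
\end{equation*}

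A final clean-up applies the same three tools to the trailing $\Op_L(\mathbf{1}_\Gamma)\mathbf{1}_\Lambda$, turning it into $\Op_L(\mathbf{1}_{\Gamma_0})\mathbf{1}_{\Lambda_0}$ and using projection identities to collapse; commuting $\Op_L^l(BA^p)$ all the way to the leftmost position then yields $\Op_L^l(BA^p)\bigl[\mathbf{1}_{\Lambda_0}\Op_L(\mathbf{1}_{\Gamma_0})\mathbf{1}_{\Lambda_0}\bigr]^p = \Op_L^l(BA^p)\bigl(T_L(\mathbb{1}_n;\Lambda_0,\Gamma_0)\bigr)^p$, which is the right-hand side of \eqref{Chain-Commutation}. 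The number of commutation steps is $O(p)$ with $p$ fixed, and each commutator has trace-norm $O(L^{d-1})$ by Lemma~\ref{Commutation-Lemma}, multiplied by operators whose operator norms are uniformly bounded in $L$ by Lemma~\ref{Bounded-Lemma}, so the accumulated error remains $O(L^{d-1})$, exactly what the relation $\sim$ requires. The principal technical obstacle is bookkeeping: verifying at every iteration that the current moving symbol $BA^k$ still enjoys both the compact-support condition and the restriction identities with $\Lambda_0$ and $\Gamma_0$, so that the three tools can be reapplied uniformly until the last step.
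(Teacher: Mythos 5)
Your proof is correct and follows essentially the same route as the paper: the exact replacement identities coming from the restriction hypotheses, the commutation relations from Lemma~\ref{Commutation-Lemma}, and the merging from Lemma~\ref{Exchange-Lemma}, applied iteratively to the symbols $BA^k$ (which inherit compact support and the restriction identities from $B$), followed by a final commutation of $\Op_L^l(BA^p)$ to the leftmost position. The paper's argument is more compact -- it pushes $\Op_L^l(B)$ all the way to the right, arriving directly at $\bigl(T_L(\mathbb{1}_n;\Lambda_0,\Gamma_0)\bigr)^p\Op_L^l(BA^p)$, whereas you stop at an intermediate form with a trailing $\Op_L(\mathbf{1}_\Gamma)\mathbf{1}_\Lambda$ and perform a separate clean-up -- but these are cosmetic differences in bookkeeping rather than a different method.
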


\begin{proof}
The symbol $B$ is compactly supported in both variables. Therefore, we can apply Lemma $\ref{Exchange-Lemma}$ and Lemma $\ref{Commutation-Lemma}$. By assumption we have $\mathbf{1}_\Lambda \Op_L^l(B)=\mathbf{1}_{\Lambda_0}\Op_L^l(B)$ and $\Op_L^l(B)\Op_L(\mathbf{1}_\Gamma)=\Op_L^l(B)\Op_L(\mathbf{1}_{\Gamma_0})$. Therefore, we get 
\begin{align}
\Op_L^l(B)\mathbf{1}_\Lambda \sim \mathbf{1}_\Lambda \Op_L^l(B) = \mathbf{1}_{\Lambda_0}\Op_L^l(B) 
\end{align}
and
\begin{align}
\Op_L^l(B)\Op_L(\mathbf{1}_\Gamma)=\Op_L^l(B)\Op_L(\mathbf{1}_{\Gamma_0})\sim \Op_L(\mathbf{1}_{\Gamma_0}) \Op_L^l(B).
\end{align}
Furthermore, we can apply $(\ref{Merge-Symbols})$ to get
\begin{align}
\Op_L^l(B)\Op_L^l(A)\sim \Op_L^l(BA).
\end{align}
We note that the three relations above still hold, if one replaces the symbol $B$ by $BA^m$ for some $m\in \N$. Repeatedly applying the relations, starting from the left, yields
\begin{align}
	\Op_L^l(B) \big(T_L(A;\Lambda,\Gamma)\big)^p 
	\sim \big(T_L(\mathbb{1}_{n};\Lambda_0,\Gamma_0) \big)^p \Op_L^l(BA^p).
\end{align}
The symbol $BA^p$ is still compactly supported in both variables. Therefore, we can repeatedly apply Lemma $\ref{Commutation-Lemma}$ to get the desired result.
\end{proof}

%%%%%%%%%%%%%%%%%%%%%%%%%%%%%%%%%%%%%%%%%%%%%%%%%%%%%%%%%%%%%%

\subsection{Asymptotic formula}

\label{subsec:polyproof}

The next crucial ingredient will be a local asymptotic formula for basic domains. 
This is a generalisation  to matrix-valued symbols of the appropriate scalar results, 
namely \cite[Thm.\ 4.1]{sobolevpw} in the case that $d\geq 2$ and the initial result by 
Widom \cite{Widom1982} in the case $d=1$.

\begin{thm}\label{Local-Asymptotics}
	Let $p\in\N$, let $\Lambda$ be a piece-wise $C^1$-basic domain and $\Gamma$ be a piece-wise $C^3$-basic domain.
	Further let $B\in C^{\infty}_{b}(\R^{d} \times \R^{d}, \C^{n\times n})$ be a matrix-valued symbol with compact support in both variables. Then
	\begin{align}
		\tr_{L^2(\R^d)\otimes\C^n}\Big[\Op_L^l(B) \big( 
			T_L(\mathbb{1}_{n};\Lambda,\Gamma)\big)^p\Big]  %\notag\\
		%=& \ 
		&=L^d \mathfrak{W}_0\big(\tr_{\C^n}[B];\Lambda,\Gamma\big) \nonumber \\
		&\quad + L^{d-1}\log L\;\mathfrak{W}_1\big(\tr_{\C^n}[B]\,
			\mathfrak{A}(\id^p;1);\partial\Lambda,\partial\Gamma\big) \nonumber \\ 
		&\quad + o(L^{d-1}\log L),
	\end{align}
	as $L \rightarrow \infty$. Here, $\id^p$ is the monomial of order $p$, the coefficients 
	$\mathfrak{W}_0$ and $\mathfrak{W}_1$ are defined in \eqref{coeff-w0-def} and \eqref{coeff-w1-def},
	respectively, and we refer to \eqref{frakAsym} for the definition of the symbol $\mathfrak{A}$.
\end{thm}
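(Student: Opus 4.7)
The plan is to reduce the matrix-valued statement to its scalar counterpart from \cite[Thm.~4.1]{sobolevpw} (for $d\ge 2$) and from \cite{Widom1982} (for $d=1$). This reduction is clean because the operator $T_L(\mathbb{1}_{n};\Lambda,\Gamma)$ has a trivial matrix structure: since $\mathbf{1}_\Lambda=1_\Lambda\otimes\mathbb{1}_n$, $\Op_L(\mathbf{1}_\Gamma)=\Op_L(1_\Gamma)\otimes\mathbb{1}_n$ and $\Op_L^l(\mathbb{1}_n)$ acts as the identity on $L^2(\R^d)\otimes\C^n$, we have the factorisation
\begin{equation*}
T_L(\mathbb{1}_{n};\Lambda,\Gamma)^p = \big(1_\Lambda\Op_L(1_\Gamma)1_\Lambda\big)^p\otimes\mathbb{1}_n = T_L(1;\Lambda,\Gamma)^p\otimes\mathbb{1}_n.
\end{equation*}

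Next I would expand $B$ in the matrix unit basis as in \eqref{Matrix-Tensor-Product} to write $\Op_L^l(B)=\sum_{\nu,\mu}\Op_L^l\big((B)_{\nu\mu}\big)\otimes E_{\nu\mu}$. Multiplying these two representations yields
\begin{equation*}
\Op_L^l(B)\, T_L(\mathbb{1}_{n};\Lambda,\Gamma)^p = \sum_{\nu,\mu=1}^{n}\Big[\Op_L^l\big((B)_{\nu\mu}\big)T_L(1;\Lambda,\Gamma)^p\Big]\otimes E_{\nu\mu}.
\end{equation*}
Using $\tr_{\C^n}E_{\nu\mu}=\delta_{\nu\mu}$ and the product rule for traces on tensor products, together with the fact that $T_L(1;\Lambda,\Gamma)$ is trace class (indeed $\Op_L(1_\Gamma)1_\Lambda$ is trace class by \cite[Chap.~11, Sect.~8, Thm.~11]{BirmanSolomjak}), the trace collapses to
\begin{equation*}
\tr_{L^2(\R^d)\otimes\C^n}\big[\Op_L^l(B)T_L(\mathbb{1}_{n};\Lambda,\Gamma)^p\big] = \tr_{L^2(\R^d)}\big[\Op_L^l(\tr_{\C^n} B)\,T_L(1;\Lambda,\Gamma)^p\big].
\end{equation*}

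At this point the problem is entirely scalar-valued. The compact support hypotheses on $B$ transfer to $\tr_{\C^n}B$, and the regularity assumptions on $\Lambda$ (piece-wise $C^1$-basic) and $\Gamma$ (piece-wise $C^3$-basic) match those in the scalar reference. Applying \cite[Thm.~4.1]{sobolevpw} for $d\ge 2$ and the result of \cite{Widom1982} for $d=1$ directly yields
\begin{align*}
\tr_{L^2(\R^d)}\big[\Op_L^l(\tr_{\C^n} B)\,T_L(1;\Lambda,\Gamma)^p\big]
&= L^d\,\mathfrak{W}_0\big(\tr_{\C^n}[B];\Lambda,\Gamma\big) \\
&\quad + L^{d-1}\log L\;\mathfrak{W}_1\big(\tr_{\C^n}[B]\,\mathfrak{A}(\id^p;1);\partial\Lambda,\partial\Gamma\big) + o(L^{d-1}\log L),
\end{align*}
because $\mathfrak{W}_0$ depends linearly on its symbol and the symbol entering $\mathfrak{W}_1$ is the product of $\tr_{\C^n}B$ with the constant $\mathfrak{A}(\id^p;1)$ obtained from evaluating \eqref{frakAsym} at the scalar value one. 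This is precisely the claimed asymptotic expansion. I do not expect any serious obstacle, as all the work is already done in the cited scalar references; the only point requiring care is to verify that the matrix trace commutes with both coefficients $\mathfrak{W}_0$ and $\mathfrak{W}_1$, which follows from the linearity of $\mathfrak{W}_0$ in its symbol and from the fact that the second argument of $\mathfrak{A}$ in the $\mathfrak{W}_1$-term is the scalar~$1$, so no matrix non-commutativity ever enters.
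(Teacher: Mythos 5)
Your proposal is correct and follows essentially the same route as the paper: expand $B$ in matrix units, use $\tr_{\C^n}E_{\nu\mu}=\delta_{\nu\mu}$ to collapse the trace to the scalar quantity $\tr_{L^2(\R^d)}\big[\Op_L^l(\tr_{\C^n}B)\,T_L(1;\Lambda,\Gamma)^p\big]$, and invoke \cite[Thm.~4.1]{sobolevpw} for $d\ge 2$ and Widom \cite{Widom1982} for $d=1$. The only place where the paper does a little more than "apply \cite{Widom1982} directly" is the case $d=1$: there one first reduces by unitary equivalence to $\Lambda=\Gamma=\,]0,\infty[\,$ and commutes $\Op_L^l(\tr_{\C^n}B)$ past the projections up to area terms via Lemma~\ref{Commutation-Lemma}, after which \cite[Eq.~(12)]{Widom1982} applies with the constant prefactor $\tr_{\C^n}[B](0,0)$.
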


\begin{proof}
Writing out the symbol in form of a tensor product and using that 
$\tr_{\C^n}[E_{\nu\mu}]=\delta_{\nu,\mu}$, where $\delta_{\nu,\mu}$ is the Kronecker delta, we arrive at
\begin{align}
	\tr_{L^2(\R^d)\otimes\C^n}\Big[\Op_L^l(B) & \big(T_L(\mathbb{1}_{n};\Lambda,\Gamma)\big)^p\Big] 
		\nonumber\\
	&=\tr_{L^2(\R^d)\otimes\C^n}\bigg[\sum_{\nu,\mu=1}^n \Op_L^l\big((B)_{\nu\mu} \otimes E_{\nu\mu}\big)
			\big(T_L(1)\otimes \mathbb{1}_{n}\big)^p\bigg] \nonumber\\ 
	&=\tr_{L^2(\R^d)\otimes\C^n}\bigg[\sum_{\nu,\mu=1}^n \Op_L^l\big((B)_{\nu\mu}\big) \big(T_L(1)\big)^p
			\otimes E_{\nu\mu}\bigg] \nonumber\\ 
	&=\sum_{\nu=1}^n \tr_{L^2(\R^d)}\Big[ \Op_L^l\big((B)_{\nu\nu}\big) \big(T_L(1)\big)^p\Big]\nonumber\\ 
	&=\tr_{L^2(\R^d)}\Big[\Op_L^l\big(\tr_{\C^n}[B]\big) \big(T_L(1)\big)^p\Big].
\end{align}
The asymptotics for this last term is given by \cite[Thm.\ 4.1]{sobolevpw} in the case that $d\geq 2$:
\begin{align}
	\tr_{L^2(\R^d)}\left[\Op_L^l\big(\tr_{\C^n}[B]\big) \big(T_L(1)\big)^p\right]
	=& L^d \mathfrak{W}_0\big(\tr_{\C^n}[B];\Lambda,\Gamma\big)\nonumber\\
		&+ L^{d-1}\log L \ \mathfrak{W}_1\big(\tr_{\C^n}[B]\,\mathfrak{A}(\id^p;1);\partial\Lambda,\partial\Gamma\big)
			\nonumber\\ 
		&+ o(L^{d-1}\log L),
\end{align}
as $L\rightarrow\infty$.  \\
For the case $d=1$ we use the initial result by Widom \cite{Widom1982}. As translation, reflection and time reversal are unitary operators, we just need to consider the case 
$\Lambda=\Gamma = \,]0,\infty[\,$. Commuting $\Op_L^l(\tr_{\C^n}[B])$ to the right is possible up to area terms by Lemma $\ref{Commutation-Lemma}$. Taking $f\equiv \id^p$, we are now in the situation of \cite[Eq.\ (12)]{Widom1982} up to multiplication with the constant $\tr_{\C^n}[B](0,0)$. The desired asymptotics follows.
\end{proof}

With these tools at hand, we are now ready to prove the asymptotics for polynomials

\begin{thm}
	\label{Asymptotics-Polynomial}
	Let $p\in\N$, let $A, B \in C^{\infty}_{b}(\R^{d} \times \R^{d}, \C^{n\times n})$ be matrix-valued symbols. 
	Let $\Lambda$ be a bounded piece-wise 
	$C^1$-admissible domain and $\Gamma$ be a piece-wise $C^3$-admissible domain. 
	We assume $\Gamma$ to be bounded or $A$ to be compactly supported in the second variable. Then
	\begin{align}
		\tr_{L^2(\R^d)\otimes\C^n}\left[\Op_L^l(B)\big(T_L(A;\Lambda,\Gamma)\big)^p\right]
		=& \ L^d \mathfrak{W}_0\big(\tr_{\C^n}[BA^p];\Lambda,\Gamma\big) \nonumber\\ 
			&+ L^{d-1}\log L \ \mathfrak{W}_1\big(\tr_{\C^n}[BA^p]\, 
				\mathfrak{A}(\id^p;1);\partial\Lambda,	\partial\Gamma\big)\nonumber\\ &+o(L^{d-1}\log L),
	\end{align}
	as $L \rightarrow \infty$. Here, $\id^p$ is the monomial of order $p$, the coefficients 
	$\mathfrak{W}_0$ and $\mathfrak{W}_1$ are defined in \eqref{coeff-w0-def} and \eqref{coeff-w1-def},
	respectively, and we refer to \eqref{frakAsym} for the definition of the symbol $\mathfrak{A}$.
\end{thm}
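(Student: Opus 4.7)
The plan is to reduce to the basic-domain statement of Theorem \ref{Local-Asymptotics} via two localisations. First, cut $A$ and $B$ down to symbols that are compactly supported in both variables. Second, apply a finite smooth partition of unity in position and momentum so that on each chart $\Lambda$ and $\Gamma$ coincide with basic domains, and then invoke Lemma \ref{Matrix-Commutation}.

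For the first step, since $\Lambda$ is bounded, pick $\chi\in C_c^\infty(\R^d)$ with $\chi=1$ on a neighborhood of $\overline\Lambda$. The identity $\Op_L^l(\chi B)=\chi\Op_L^l(B)$ combined with $\chi\mathbf{1}_\Lambda=\mathbf{1}_\Lambda$ makes the replacement $B\mapsto\chi B$ exact in $\tr[\Op_L^l(B)(T_L(A;\Lambda,\Gamma))^p]$ after cyclically placing $\mathbf{1}_\Lambda$ next to $\Op_L^l(B)$; the analogous cutoff $A\mapsto\chi A$ is not exact, because $\Op_L^l(A)$ is sandwiched between $\Op_L(\mathbf{1}_\Gamma)$-factors rather than $\mathbf{1}_\Lambda$-factors, but the error symbol $(1-\chi)A$ has $x$-support disjoint from a neighborhood of $\Lambda$, and a support-separation Schatten estimate of the type of Thm.~3.2 in \cite{sobolevschatten} (transferred to matrix-valued symbols by the tensor decomposition \eqref{Matrix-Tensor-Product} and the factorisation \eqref{q-factorization}) yields an $O(L^{d-1})$ trace-norm bound. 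For the momentum variable, take $\tilde\chi\in C_c^\infty(\R^d)$ equal to $1$ on a neighborhood of $\Gamma$ (if $\Gamma$ is bounded) or on a neighborhood of $\supp_\xi A$ (if $A$ is compactly supported in $\xi$). Replacing $A\mapsto A\tilde\chi$ and $B\mapsto B\tilde\chi$ produces error symbols whose $\xi$-support is disjoint from $\Gamma$ or from $\supp_\xi A$ respectively, and the same kind of support-separation estimates yield an $O(L^{d-1})$ trace-norm error.

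Having reduced to compactly supported $A$ and $B$, cover $\supp_x B$ by finitely many balls $B_\rho(x_i)$ on which $\Lambda\cap B_{4\rho}(x_i)=\Lambda_i\cap B_{4\rho}(x_i)$ for basic domains $\Lambda_i$, and analogously cover $\supp_\xi B$ by balls on which $\Gamma$ agrees with a basic domain $\Gamma_j$. Pick subordinate smooth partitions of unity $\{\phi_i\}$ in $x$ and $\{\psi_j\}$ in $\xi$, and set $B_{ij}(x,\xi):=\phi_i(x)\psi_j(\xi)B(x,\xi)$. Each $B_{ij}$ is compactly supported in both variables and satisfies $B_{ij}|_{\Lambda\times\R^d}=B_{ij}|_{\Lambda_i\times\R^d}$ and $B_{ij}|_{\R^d\times\Gamma}=B_{ij}|_{\R^d\times\Gamma_j}$, so Lemma \ref{Matrix-Commutation} yields
\begin{equation*}
\Op_L^l(B_{ij})\bigl(T_L(A;\Lambda,\Gamma)\bigr)^p \sim \Op_L^l(B_{ij}A^p)\bigl(T_L(\mathbb{1}_n;\Lambda_i,\Gamma_j)\bigr)^p.
\end{equation*}
Since $\Op_L^l(B)=\sum_{i,j}\Op_L^l(B_{ij})$ on the relevant supports (the leftover being again support-separated from $\mathbf{1}_\Lambda$ or $\Op_L^l(A)$ and hence negligible), applying Theorem \ref{Local-Asymptotics} to each summand and summing over $(i,j)$ telescopes the $\mathfrak{W}_0$- and $\mathfrak{W}_1$-contributions into the global coefficients $\mathfrak{W}_0(\tr_{\C^n}[BA^p];\Lambda,\Gamma)$ and $\mathfrak{W}_1(\tr_{\C^n}[BA^p]\,\mathfrak{A}(\id^p;1);\partial\Lambda,\partial\Gamma)$, using the identities $\phi_i\mathbf{1}_{\Lambda_i}=\phi_i\mathbf{1}_\Lambda$, $\psi_j\mathbf{1}_{\Gamma_j}=\psi_j\mathbf{1}_\Gamma$, $\sum_i\phi_i=1$ on $\supp_x B$, and $\sum_j\psi_j=1$ on $\supp_\xi B$.

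The main obstacle is controlling the cutoff errors in the first step: $\Op_L(\mathbf{1}_\Gamma)$ is a rough operator to which standard smooth pseudodifferential calculus does not apply, so one has to use Schatten-type support-separation estimates in the spirit of those in \cite{sobolevschatten} that underlie Lemma \ref{Commutation-Lemma}, together with the tensor decomposition \eqref{Matrix-Tensor-Product} and the factorisation \eqref{q-factorization} to reduce to the scalar case. Once this is dealt with, the partition-of-unity step and the applications of Lemma \ref{Matrix-Commutation} and Theorem \ref{Local-Asymptotics} are routine.
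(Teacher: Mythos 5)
Your second step (partition of unity, Lemma \ref{Matrix-Commutation}, Theorem \ref{Local-Asymptotics}, then summing the local contributions) is exactly the paper's mechanism. The problem is in your first step. You want to replace $A$ by $\chi A$ with $\chi\in C_c^\infty(\R^d)$ equal to $1$ near $\overline\Lambda$, and you claim the error is $O(L^{d-1})$ in trace norm by ``a support-separation Schatten estimate of the type of Thm.~3.2 in \cite{sobolevschatten}''. That theorem controls $\|h_1 \Op_L^t(a) h_2\|_q$ when the \emph{multiplication operators} $h_1,h_2$ have separated supports. But the error you need to estimate is, up to the trace-class tail of $T_L(A)^p$, of the form $\mathbf{1}_\Lambda \Op_L(\mathbf{1}_\Gamma)(1-\chi)\Op_L^l(A)\Op_L(\mathbf{1}_\Gamma)\mathbf{1}_\Lambda$. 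The factor $\Op_L(\mathbf{1}_\Gamma)$ is a Fourier multiplier and therefore does not preserve position support; it sits \emph{between} the two separated multiplication operators $\mathbf{1}_\Lambda$ and $1-\chi$, so the support-separation estimate does not apply. Rewriting $\mathbf{1}_\Lambda\Op_L(\mathbf{1}_\Gamma)(1-\chi)=-\mathbf{1}_\Lambda[\Op_L(\mathbf{1}_\Gamma),\chi]$ does not rescue the argument either, because Lemma \ref{Commutation-Lemma} only controls commutators $[\Op_L^t(A),\Op_L(\mathbf{1}_\Gamma)]$ for symbols compactly supported in \emph{both} variables, whereas $\chi$ depends only on $x$. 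The same obstruction (an intervening $\mathbf{1}_\Lambda$ or $\Op_L(\mathbf{1}_\Gamma)$) blocks your $O(L^{d-1})$ claim for the $\xi$-cutoff of $B$. So the cutoff errors are not justified as written.

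Fortunately, the offending step is unnecessary. Lemma \ref{Matrix-Commutation} only requires the \emph{first} symbol (your $B_{ij}$) to be compactly supported in both variables, and $B_{ij}=\phi_i\psi_j B$ is automatically compactly supported because $\phi_i,\psi_j$ are; no global cutoff of $A$ is needed. The paper's proof never cuts $A$: it inserts $\Op_L(\mathbf{1}_{B_R(0)})$ via the exact identity $\Op_L^l(A)=\Op_L^l(A)\Op_L(\mathbf{1}_{B_R(0)})$ (this is the legitimate version of your $\tilde\chi$-cutoff of $A$, which is also exact rather than an $O(L^{d-1})$ approximation) and then uses the exact partition-of-unity identity
\begin{equation*}
\Op_L(\mathbf{1}_{B_R(0)})\Op_L(\mathbf{1}_\Gamma)\mathbf{1}_\Lambda
=\sum_{j\in\mathcal J,\,k\in\mathcal K}\Op_L(\mathbf{1}_{B_R(0)\cap\Gamma})\Op_L^r(\phi_j\psi_k)\mathbf{1}_\Lambda ,
\end{equation*}
where $\{\phi_j\}$, $\{\psi_k\}$ are subordinate to finite ball covers of $\overline\Lambda$ and $\overline{B_R(0)\cap\Gamma}$. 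The cutoff factors $\phi_j\psi_k$ are compactly supported in both variables, so Lemmas \ref{Commutation-Lemma} and \ref{Exchange-Lemma} apply to them directly, and all the symbols fed into Lemma \ref{Matrix-Commutation} and Theorem \ref{Local-Asymptotics} are compactly supported without ever introducing a global cutoff of $A$ or an approximate $\xi$-cutoff of $B$. If you drop the $x$-cutoff of $A$ and replace your partition of $\supp B$ by the paper's partition of $\overline\Lambda\times\overline{B_R(0)\cap\Gamma}$, you recover the paper's proof.
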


\begin{proof}
We give a proof in the case that $\Gamma$ is unbounded and $A$ is compactly supported in the second variable. The other case works similarly and is slightly easier.

Let $R>0$ such that the support of $A$ in the second variable is contained in $B_R(0)$. 
By the definition of $T_L$ we have 
\begin{multline}
	\Op_L^l(B)\big(T_L(A;\Lambda,\Gamma)\big)^p \\ 
	= \Op_L^l(B)\mathbf{1}_\Lambda \Op_L(\mathbf{1}_\Gamma) \Op_L^l(A) \Op_L(\mathbf{1}_{B_R(0)}) \Op_L(\mathbf{1}_\Gamma) \mathbf{1}_\Lambda\big(T_L(A;\Lambda,\Gamma)\big)^{p-1}.
\end{multline}
Both $\Lambda$ and $B_R(0)\cap\Gamma$ are bounded, therefore we can cover their closures with finitely many open balls such that $\Lambda$, respectively $\Gamma$, is represented by a basic domain denoted by $\Lambda_j$ for $j\in\mathcal{J}$, respectively $\Gamma_k$ for $k\in\mathcal{K}$, when restricted to any of such balls. Here, $\mathcal{J},\mathcal{K} \subset \N$ are two finite index sets. 
We denote a partition of unity subordinate to the covering of $\Lambda$ by 
$\{\phi_j\}_{j\in\mathcal{J}}$ and a partition of unity subordinate to the covering 
of $B_R(0)\cap\Gamma$ by $\{\psi_k\}_{k\in\mathcal{K}}$. 
By the construction of the coverings we get
\begin{align}\label{thm-poly-eq-1}
	\Op_L(\mathbf{1}_{B_R(0)}) \Op_L(\mathbf{1}_\Gamma) \mathbf{1}_\Lambda 
	= \sum_{j\in\mathcal{J},\; k\in\mathcal{K}} \Op_L(\mathbf{1}_{B_R(0)\cap\Gamma}) \Op_L^r(\phi_j\psi_k) \mathbf{1}_\Lambda.
\end{align}
We note that the symbol $\phi_j\psi_k$ of the right operator  $\Op_L^r(\phi_j\psi_k)$ is to be understood as $\phi_j\psi_k:\R^{d}\times\R^{d}\ni (y,\xi)\mapsto\phi_{j}(y)\psi_{k}(\xi)$. As in \eqref{Lemma-1-Thm3.2} this is an abuse of notation and it is only clear from the definitions of the functions $\{\phi_j\}_{j\in\mathcal{J}}$ and $\{\psi_k\}_{k\in\mathcal{K}}$. 
With \eqref{thm-poly-eq-1}, we obtain
\begin{multline}
\Op_L^l(B)\mathbf{1}_\Lambda \Op_L(\mathbf{1}_\Gamma) \Op_L^l(A)\Op_L(\mathbf{1}_\Gamma)\mathbf{1}_\Lambda \\ = \sum_{j\in\mathcal{J},\; k\in\mathcal{K}}\Op_L^l(B)\mathbf{1}_\Lambda \Op_L(\mathbf{1}_\Gamma) \Op_L^l(A)\Op_L(\mathbf{1}_\Gamma) \Op_L^r(\phi_j\psi_k)\mathbf{1}_\Lambda.
\end{multline}
We note that the symbol $\phi_j\psi_k$ is compactly supported in both variables. Therefore, we can establish the following relations with the help of Lemma $\ref{Commutation-Lemma}$ and $\ref{Exchange-Lemma}$
for every $j\in\mathcal{J}$ and $k\in\mathcal{K}$
\begin{align}
 \Op_L(\mathbf{1}_\Gamma) \Op_L^r(\phi_j\psi_k)\sim \Op_L^r(\phi_j\psi_k)&\Op_L(\mathbf{1}_{\Gamma})\sim \Op_L^l(\phi_j\psi_k)\Op_L(\mathbf{1}_\Gamma), \nonumber \\ \nonumber \\
 \Op_L^l(A)\Op_L^l(\phi_j\psi_k)\sim \Op_L^l(A\phi_j\psi_k)\sim& \Op_L^l(\phi_j\psi_k)\Op_L^l(A)\sim \Op_L^r(\phi_j\psi_k)\Op_L^l(A), \nonumber \\ \nonumber \\
 \mathbf{1}_\Lambda \Op_L^l(\phi_j\psi_k) \sim& \Op_L^l(\phi_j\psi_k)\mathbf{1}_\Lambda.
\end{align}
Combining them, yields
\begin{align}
	\Op_L^l(B)\big(T_L(A;\Lambda,\Gamma)\big)^p
	&\sim \sum_{j\in\mathcal{J},\; k\in\mathcal{K}} \Op_L^l(B)\Op_L^l(\phi_j\psi_k)
			\big(T_L(A;\Lambda,\Gamma)\big)^p \nonumber \\ 
	&\sim\sum_{j\in\mathcal{J},\; k\in\mathcal{K}}\Op_L^l(B_{j,k}) \big(T_L(A;\Lambda,\Gamma)\big)^p,
\end{align}
where the matrix-valued symbol $B_{j,k}$ is defined by $B_{j,k}(x,\xi):= B(x,\xi)\phi_j(x) \psi_k(\xi)$
for $x,\xi\in\R^{d}$. We note that the symbol $B_{j,k}$ is compactly supported in both variables and that 
\begin{align}
B_{j,k}|_{\Lambda\times\R^d}=B_{j,k}|_{\Lambda_j\times\R^d} \ \ \text{and} \ \ B_{j,k}|_{\R^d\times\Gamma}=B_{j,k}|_{\R^d\times\Gamma_k}.
\end{align}
Therefore, we are able to apply Lemma $\ref{Matrix-Commutation}$ and conclude 
\begin{align}
	\Op_L^l(B)\big(T_L(A;\Lambda,\Gamma)\big)^p
	\sim \sum_{j\in\mathcal{J},\; k\in\mathcal{K}} \Op_L^l(B_{j,k}A^p)
			\big(T_L(\mathbb{1}_{n};\Lambda_j,\Gamma_k)\big)^p.
\end{align}

Now both $\Lambda_j$ and $\Gamma_k$ are basic domains. Therefore, we can apply Theorem $\ref{Local-Asymptotics}$ to get the asymptotics
\begin{align}
	&\hspace{-1em}\tr_{L^2(\R^d)\otimes\C^n}\left[\Op_L^l(B_{j,k}A^p) 
			\big(T_L(\mathbb{1}_{n};\Lambda_j, \Gamma_k)\big)^p\right] \nonumber\\
	&= L^d \mathfrak{W}_0\big(\tr_{\C^n}[B_{j,k}A^p];\Lambda_j,\Gamma_k\big)
			+  L^{d-1}\log L\; \mathfrak{W}_1\big(\tr_{\C^n}[B_{j,k}A^p]\,\mathfrak{A}(\id^p;1);
				\partial\Lambda_j,\partial\Gamma_k\big) \notag\\
	&\quad +o(L^{d-1}\log L),
\end{align}
as $L\rightarrow\infty$. As $\Lambda$ and $\Gamma$ are locally represented by $\Lambda_j$ and $\Gamma_k$, we can replace each occurrence of the basic domains by $\Lambda$ respectively $\Gamma$. Using the linearity of the coefficients, we get
\begin{align}
	\tr_{L^2(\R^d)\otimes\C^n} &\Big[\sum_{j\in\mathcal{J},\; k\in\mathcal{K}}\Op_L^l(B_{j,k}A^p)
		\big(T_L(\mathbb{1}_{n};\Lambda_j,\Gamma_k)\big)^p\Big] \notag\\
	&= L^d \mathfrak{W}_0\big(\tr_{\C^n}[BA^p];\Lambda,\Gamma\big)
			+  L^{d-1}\log L\; \mathfrak{W}_1\big(\tr_{\C^n}[BA^p]\,\mathfrak{A}(\id^p;1);\partial\Lambda,
				\partial\Gamma\big)	\notag\\
	&\quad + o(L^{d-1}\log L),
\end{align}
which concludes the proof of the theorem.
\end{proof}

%%%%%%%%%%%%%%%%%%%%%%%%%%%%%%%%%%%%%%%%%%%%%%%%%%%%%%%%%%%%%%%%%%%%

\subsection{Extension to more general Wiener--Hopf operators}
\label{subsec:sumsymbols}

In this section we will generalise the results from Section~\ref{subsec:polyproof} to treat more general 
jump discontinuities at the boundary $\partial\Gamma$ of the momentum region $\Gamma\subset \R^{d}$
with different matrix-valued symbols $A_{1}$ and $A_{2}$ 
on the inside, respectively outside. 
The non-commutativity of $A_{1}$ and $A_{2}$ represents an additional technical challenge in the proof of the next theorem for matrix-valued symbols as compared to the scalar case \cite[Thm.\ 5.2]{sobolevc2} . 
We recall the definitions of the Wiener–Hopf operators $D_{L}(A_{1}, A_{2})$ and $G_{L}(A_{1}, A_{2})$ in \eqref{Definition_DL} and \eqref{Definition_GL}.

\begin{thm}\label{Asymptotics-Gamma-Gamma-Complement}
Let $p\in\N$, let $A_1,A_2\in C^{\infty}_{b}(\R^{d} \times \R^{d}, \C^{n\times n})$ be matrix-valued symbols and assume $A_2$ to be compactly supported in the second variable. Let $\Lambda$ be a bounded piece-wise $C^1$-admissible domain and $\Gamma$ be a piece-wise $C^3$-admissible domain. We assume $\Gamma$ to be bounded or $A_1$ to be compactly supported in the second variable. Then
\begin{align}\label{Asymptotics-Gamma-Gamma-Complement-Claim}
\tr_{L^2(\R^d)\otimes\C^n}\big[ \big(D_L(A_1,A_2;\Lambda,\Gamma)\big)^p\big]=& \ L^d \big[ \mathfrak{W}_0\big(\tr_{\C^n}[ A_1^p];\Lambda,\Gamma\big)+\mathfrak{W}_0\big(\tr_{\C^n}[ A_2^p];\Lambda,\Gamma^c\big) \big]\nonumber\\&+L^{d-1}\log L \ \mathfrak{W}_1\big(\mathfrak{U}(\id^p; A_1, A_2);\partial\Lambda,\partial\Gamma\big)\nonumber\\ &+o(L^{d-1}\log L),
\end{align}
	as $L \rightarrow \infty$. Here, $\id^p$ is the monomial of order $p$, the coefficients 
	$\mathfrak{W}_0$ and $\mathfrak{W}_1$ are defined in \eqref{coeff-w0-def} and \eqref{coeff-w1-def},
	respectively, and we refer to \eqref{frakU-def} for the definition of the symbol $\mathfrak{U}$.
\end{thm}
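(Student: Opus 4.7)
The plan is to expand
$$\big(D_L(A_1,A_2;\Lambda,\Gamma)\big)^p \;=\; \sum_{\epsilon\in\{1,2\}^p} S_{\epsilon_1}\cdots S_{\epsilon_p},$$
with $S_k:=T_L(A_k;\Lambda,\Gamma_k)$, $\Gamma_1:=\Gamma$, $\Gamma_2:=\Gamma^c$, and to analyse the trace of each of the $2^p$ summands separately. For the two \emph{pure} sequences $\epsilon\equiv 1$ and $\epsilon\equiv 2$, Theorem~\ref{Asymptotics-Polynomial} applied with $B=\mathbb{1}_n$ to $T_L(A_1;\Lambda,\Gamma)^p$ and to $T_L(A_2;\Lambda,\Gamma^c)^p$ already delivers the two bulk terms $L^d\mathfrak{W}_0(\tr_{\C^n}[A_k^p];\Lambda,\Gamma_k)$ together with the \emph{diagonal} part $L^{d-1}\log L\,\mathfrak{W}_1\!\big((\tr_{\C^n}[A_1^p]+\tr_{\C^n}[A_2^p])\,\mathfrak{A}(\id^p;1);\partial\Lambda,\partial\Gamma\big)$ of the logarithmic coefficient, using $\partial\Gamma^c=\partial\Gamma$ from Remark~\ref{Remark-Complement-Admissible}.

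For each of the $2^p-2$ mixed sequences $\epsilon$, with $n_1(\epsilon)=a\geq 1$ ones and $n_2(\epsilon)=b\geq 1$ twos, the strategy is to move every $\Op_L^l(A_{\epsilon_k})$ across the intervening projections $\Op_L(\mathbf{1}_\Gamma),\Op_L(\mathbf{1}_{\Gamma^c})$ and cut-offs $\mathbf{1}_\Lambda$ by repeated application of Lemma~\ref{Commutation-Lemma}, picking up only errors of trace norm $O(L^{d-1})=o(L^{d-1}\log L)$ at each step. Next, \eqref{Merge-Symbols} of Lemma~\ref{Exchange-Lemma} merges the symbol factors, \emph{in their original left-to-right order}, into a single $\Op_L^l(A_{\epsilon_1}A_{\epsilon_2}\cdots A_{\epsilon_p})$, while the projections and cut-offs consolidate (using $\Op_L(\mathbf{1}_\Gamma)^2=\Op_L(\mathbf{1}_\Gamma)$, $\Op_L(\mathbf{1}_{\Gamma^c})^2=\Op_L(\mathbf{1}_{\Gamma^c})$ and $\mathbf{1}_\Lambda^2=\mathbf{1}_\Lambda$) into a scalar product $R_\epsilon$ of alternating $\mathbf{1}_\Lambda\Op_L(\mathbf{1}_\Gamma)$ and $\mathbf{1}_\Lambda\Op_L(\mathbf{1}_{\Gamma^c})$ blocks dictated by the run-structure of $\epsilon$. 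Expanding the matrix symbol as in~\eqref{Matrix-Tensor-Product} and exploiting the cyclic invariance of $\tr_{\C^n}$, the matrix-valued trace reduces to the scalar quantity $\tr_{L^2(\R^d)}\!\big[\Op_L^l(\tr_{\C^n}[A_{\epsilon_1}\cdots A_{\epsilon_p}])\,R_\epsilon\big]$. After localising $\Lambda$ and a neighbourhood of $\partial\Gamma$ by finitely many basic-domain representatives as in the proof of Theorem~\ref{Asymptotics-Polynomial}, the corresponding scalar asymptotic identity of \cite[Thm.\ 5.2]{sobolevc2} (building on \cite{Widom1982,sobolevpw}) yields $L^{d-1}\log L\,B(a,b)\,\mathfrak{W}_1\!\big(\tr_{\C^n}[A_{\epsilon_1}\cdots A_{\epsilon_p}];\partial\Lambda,\partial\Gamma\big)$ with $B(a,b)=(a-1)!\,(b-1)!/(a+b-1)!$ the Euler Beta function, and no $L^d$ bulk term, since the semiclassical integrand for mixed $\epsilon$ contains the vanishing factor $\mathbf{1}_\Gamma\mathbf{1}_{\Gamma^c}\equiv 0$.

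Finally, we assemble the contributions. The non-commutative multinomial expansion
$(A_1 t+A_2(1-t))^p=\sum_{\epsilon\in\{1,2\}^p}t^{n_1(\epsilon)}(1-t)^{n_2(\epsilon)}A_{\epsilon_1}\cdots A_{\epsilon_p}$
combined with $\int_0^1 t^{a-1}(1-t)^{b-1}\,\d t = B(a,b)$ turns the total coefficient of $L^{d-1}\log L$ into
$$\mathfrak{W}_1\!\left(\frac{1}{(2\pi)^2}\int_0^1\frac{\tr_{\C^n}\!\big[(A_1 t+A_2(1-t))^p - A_1^p\,t - A_2^p(1-t)\big]}{t(1-t)}\,\d t\,;\,\partial\Lambda,\partial\Gamma\right),$$
which is exactly $\mathfrak{W}_1(\mathfrak{U}(\id^p;A_1,A_2);\partial\Lambda,\partial\Gamma)$ by~\eqref{frakU-def}. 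The main obstacle is the non-commutativity of $A_1$ and $A_2$: in contrast to the scalar case \cite[Thm.\ 5.2]{sobolevc2}, two sequences sharing the same $(n_1,n_2)$ need not produce the same operator trace. The resolution is to perform the full commutation and merging procedure \emph{inside} the trace before the scalar reduction, so that the sole non-commutative datum surviving is the scalar-valued function $\tr_{\C^n}[A_{\epsilon_1}\cdots A_{\epsilon_p}]$, whose cyclic invariance is precisely what is required to reproduce the combinatorial structure of~\eqref{frakU-def}.
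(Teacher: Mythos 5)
Your overall strategy (expand $(D_L)^p$ into all $2^p$ ordered words and analyse each word) is viable and genuinely different from the paper's proof, but as written it has a real gap at its decisive step. You assert that each mixed word $S_{\epsilon_1}\cdots S_{\epsilon_p}$ contributes $L^{d-1}\log L\, B(a,b)\,\mathfrak{W}_1\big(\tr_{\C^n}[A_{\epsilon_1}\cdots A_{\epsilon_p}];\partial\Lambda,\partial\Gamma\big)$ (you also drop the factor $(2\pi)^{-2}$), \emph{independently of the run structure}, by appeal to \cite[Thm.\ 5.2]{sobolevc2}. That theorem concerns traces of (functions of) the \emph{sum} $T_L(a_1;\Lambda,\Gamma)+T_L(a_2;\Lambda,\Gamma^c)$ with scalar symbols; it contains no asymptotics for an individual ordered product of $\mathbf{1}_\Lambda\Op_L(\mathbf{1}_\Gamma)$- and $\mathbf{1}_\Lambda\Op_L(\mathbf{1}_{\Gamma^c})$-blocks, and at best it pins down the \emph{average} of the per-word coefficients over all words with given $(a,b)$ -- which does not suffice here, because in the matrix case the accompanying factors $\tr_{\C^n}[A_{\epsilon_1}\cdots A_{\epsilon_p}]$ differ from word to word. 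The per-word claim is in fact true, but it needs an argument you never give: once the symbols are pulled out, the projection word is \emph{not} ``dictated by the run-structure''; since $\mathbf{1}_\Lambda\Op_L(\mathbf{1}_{\Gamma^c})\mathbf{1}_\Lambda=\mathbf{1}_\Lambda-T_L(\mathbb{1}_n;\Lambda,\Gamma)$ (Remark~\ref{Remark-Complement-Admissible}) commutes with $T_L(\mathbb{1}_n;\Lambda,\Gamma)$, one has $R_\epsilon=\big(T_L(\mathbb{1}_n;\Lambda,\Gamma)\big)^a\big(\mathbf{1}_\Lambda-T_L(\mathbb{1}_n;\Lambda,\Gamma)\big)^b$, a polynomial in $T_L(\mathbb{1}_n;\Lambda,\Gamma)$ depending only on $(a,b)$. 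Expanding this binomially, Theorem~\ref{Asymptotics-Polynomial} (with the merged symbol as $B$ and $A=\mathbb{1}_n$) gives cancellation of the volume terms and the log-coefficient $\frac{1}{(2\pi)^2}\int_0^1 t^{a-1}(1-t)^{b-1}\,\d t=\frac{B(a,b)}{(2\pi)^2}$. Without this (or an equivalent) argument, precisely the step where non-commutativity could bite is unsupported; the concluding remark about cyclic invariance of $\tr_{\C^n}$ is not what is needed and does not repair it.

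A second, smaller gap: Lemma~\ref{Commutation-Lemma} and \eqref{Merge-Symbols} require the symbol being commuted or merged to be compactly supported in \emph{both} variables, whereas $A_1,A_2$ are only $C^\infty_b$ in $x$ (and $A_1$ possibly also non-compact in $\xi$ when $\Gamma$ is bounded). So the wholesale moving of $\Op_L^l(A_{\epsilon_k})$ across $\mathbf{1}_\Lambda$, $\Op_L(\mathbf{1}_\Gamma)$ and $\Op_L(\mathbf{1}_{\Gamma^c})$ is not licensed as stated; you must first insert cut-offs $\phi,\psi$ as in \eqref{Definition-Phi-Psi} to replace $A_1$ by $\phi\psi A_1$ (or exploit the compact momentum support of $A_1$ when $\Gamma$ is unbounded) and $A_2$ by $\phi A_2$. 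This is exactly how the paper's proof starts; it then sidesteps the per-word analysis entirely by writing $D_L(A_1,A_2)\sim T_L(A_2;\Lambda,\R^d)+T_L(A_1-A_2;\Lambda,\Gamma)$ and expanding the $p$-th power of this sum via \eqref{Expansion-Matrices}, which keeps the ordered products $C^{(k)}_{\pi(1)}\cdots C^{(k)}_{\pi(p)}$ and produces $\mathfrak{U}(\id^p;A_1,A_2)$ after the $t$-integration. With the two repairs above (cut-offs first, and the identification of $R_\epsilon$ as a polynomial in $T_L(\mathbb{1}_n;\Lambda,\Gamma)$), your word-by-word route does close, and the final Beta-function bookkeeping you describe is correct.
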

\begin{proof}
We give a proof in the case that $\Gamma$ is bounded and $A_1$ is not compactly supported in the second variable. The other case works similarly.

As $\Lambda$ and $\Gamma$ are bounded we can find real-valued functions $\phi,\psi\in C_c^\infty(\R^d)$ such that 
\begin{align}\label{Definition-Phi-Psi}
\phi|_\Lambda = 1  \ \ \text{and}  \ \ \psi|_\Gamma= 1.
\end{align}
By construction we have $\mathbf{1}_\Lambda \Op_L(\mathbf{1}_\Gamma)=\mathbf{1}_\Lambda \Op_L^l(\phi\psi) \Op_L(\mathbf{1}_\Gamma)$. We rewrite 
\begin{align}
T_L(A_1;\Lambda,\Gamma) =\mathbf{1}_\Lambda \Op_L^l(\phi\psi) \Op_L(\mathbf{1}_\Gamma)\Op_L^l(A_1)\Op_L(\mathbf{1}_\Gamma)\mathbf{1}_\Lambda.
\end{align}
Using Lemma $\ref{Commutation-Lemma}$ and $\ref{Exchange-Lemma}$, we get
\begin{align}\label{Gamma-Reduction-Compact-Symbol}
\mathbf{1}_\Lambda \Op_L^l(\phi\psi) \Op_L(\mathbf{1}_\Gamma)\Op_L^l(A_1)\Op_L(\mathbf{1}_\Gamma)\mathbf{1}_\Lambda \sim  \mathbf{1}_\Lambda  \Op_L(\mathbf{1}_\Gamma)\Op_L^l(\phi\psi A_1)\Op_L(\mathbf{1}_\Gamma)\mathbf{1}_\Lambda.
\end{align}
As the symbol $\phi\psi A_1$ is compactly supported in both variables, we can commute further to get
\begin{align}\label{Gamma-Reduction}
T_L(A_1;\Lambda,\Gamma)\sim\mathbf{1}_\Lambda \Op_L^l(\phi\psi A_1)\Op_L(\mathbf{1}_\Gamma)\mathbf{1}_\Lambda = \mathbf{1}_\Lambda \Op_L^l(A_1)\Op_L(\mathbf{1}_\Gamma)\mathbf{1}_\Lambda.
\end{align}
For the operator $T_L(A_2;\Lambda,\Gamma^c)$ we can argue in a similar fashion. Let $\zeta\in C_c^\infty(\R^d)$ be a function such that $\zeta$ equals $1$ on the support of $A_2$ in its second variable. Write
\begin{align}\label{Gamma-Complement-Reduction}
	T_L(A_2;\Lambda,\Gamma^c) 
	&= \mathbf{1}_\Lambda \Op_L(\mathbf{1}_{\Gamma^c}) \Op_L^l(A_2)\Op_L(\mathbf{1}_{\Gamma^c})
			\Op_L^r(\phi\zeta)\mathbf{1}_\Lambda \nonumber \\ 
	&\sim  \mathbf{1}_\Lambda \Op_L(\mathbf{1}_{\Gamma^c}) \Op_L^l(A_2\phi\zeta)
			\Op_L(\mathbf{1}_{\Gamma^c})\mathbf{1}_\Lambda\nonumber \\ 
	&= \mathbf{1}_\Lambda \Op_L(\mathbf{1}_{\Gamma^c}) \Op_L^l(A_2\phi)
			\Op_L(\mathbf{1}_{\Gamma^c})\mathbf{1}_\Lambda \nonumber \\ 
	&\sim \mathbf{1}_\Lambda  \Op_L^l(A_2)\Op_L(\mathbf{1}_{\Gamma^c})\mathbf{1}_\Lambda.
\end{align}
Here the commutation with $\Op_L(\mathbf{1}_{\Gamma^c})$ is possible because $\Op_L(\mathbf{1}_{\Gamma^c})=\Op_L(\mathbf{1}_{(\overline{\Gamma})^c})$, as is explained in Remark~\ref{Remark-Complement-Admissible}(b).
Combining (\ref{Gamma-Reduction}), (\ref{Gamma-Complement-Reduction}) and (\ref{Gamma-Reduction}) for $A_1-A_2$, we get
\begin{align}
D_L(A_1,A_2)= T_L(A_1;\Lambda,\Gamma)+T_L(A_2;\Lambda,\Gamma^c)\sim T_L(A_2;\Lambda,\R^d)+T_L(A_1-A_2;\Lambda,\Gamma).
\end{align}
The next step will be to compute the trace of $\big(D_L(A_1,A_2)\big)^p$ for $p\in\N$. The operators 
$T_L(A_2;\Lambda,\R^d)$ and $T_L(A_1-A_2;\Lambda,\Gamma)$ do not even commute up to area terms because the matrix-valued
symbols $A_2$ and $B:= A_1-A_2$ do not necessarily commute. Therefore, we need to be more careful than in the scalar-valued case. In order to write out the result 
we use the expansion
\begin{align}\label{Expansion-Matrices}
(X+Y)^p=\sum_{k=0}^p\frac{1}{k!(p-k)!}\sum_{\pi\in \mathcal S_p}Z_{\pi(1)}^{(k)}\cdots Z_{\pi(p)}^{(k)}
\end{align}
for $X$ and $Y$ elements of an associative algebra and 
\begin{align}
Z_{j}^{(k)}:= X \ \ \text{if} \ \ j>k, \ \ \ \ \
Z_{j}^{(k)}:= Y \ \ \text{if} \ \ j\leq k, \ \ \ \ \ \text{for} \ \ k\in\{0,\ldots,p\}, j\in\{1,\ldots,p\}.
\end{align}
Here $\mathcal S_p$ denotes the symmetric group on the set $\{1,\ldots,p\}$. Applying this with $X=T_L(A_2;\Lambda,\R^d)$ and 
$Y=T_L(B;\Lambda,\Gamma)$, yields
\begin{equation}\label{Expansion-D_L}
	\big(D_L(A_1,A_2)\big)^p \sim \sum_{k=0}^p\frac{1}{k!(p-k)!}\sum_{\pi\in \mathcal S_p}Z_{\pi(1)}^{(k)}\cdots Z_{\pi(p)}^{(k)} .
\end{equation}
We note that $X=T_L(\phi A_2;\Lambda,\R^d)$, where the symbol $\phi A_2$ is compactly supported in both variables.
The goal is to move all occurrences of $\Op_L^l(\phi A_2)$ and $\Op_L^l(B)$ to the left in order to apply Theorem \ref{Asymptotics-Polynomial} for the asymptotic evaluation of $\tr_{L^2(\R^d)\otimes\C^n}  \big[\big(D_L(A_1,A_2)\big)^p\big]$. To this end, we introduce the symbols
\begin{align}
C_{j}^{(k)}:= A_2 \ \ \text{if} \ \ j>k, \ \ \ \ \
C_{j}^{(k)}:= B \ \ \text{if} \ \ j\leq k, \ \ \ \ \ \text{for} \ \ k\in\{0,\ldots,p\}, j\in\{1,\ldots,p\}.
\end{align}
Since the symbol $B$ is not necessarily supported in both variables, the established commutation results in Section \ref{subsec:commute} do not apply directly to it. For this reason we will treat the term with $k=p$ in \eqref{Expansion-D_L} later, as it does not contain a factor $\Op_L^l(\phi A_2)$. For $k\in\{0,\ldots,p-1\}$ such a factor is present, Lemma $\ref{Commutation-Lemma}$ permits to commute it with $\mathbf{1}_\Lambda$ and $\Op_L(\mathbf{1}_\Gamma)$ up to area terms and Lemma $\ref{Exchange-Lemma}$ allows to merge it up to to area terms with other $\Op_L^l(\phi A_2)$ or $\Op_L^l(B)$, the result being a left operator of a symbol that is compactly supported in both variables. In this way, we start with the rightmost factor of $\Op_L^l(\phi A_2)$ and move it to the right thereby uniting it with all $\Op_L^l(B)$ to the right of it in a single $\Op_L^l$. Subsequently moving this operator to the very left, we obtain
\begin{equation}\label{Expansion-Commuted}
	Z_{\pi(1)}^{(k)}\cdots Z_{\pi(p)}^{(k)} \sim \mathbf{1}_\Lambda 
	\Op_L^l\Big(\phi^{p-k}C_{\pi(1)}^{(k)}\cdots C_{\pi(p)}^{(k)}\Big)
	\Big( \mathbf{1}_\Lambda \Op_L(\mathbf{1}_\Gamma)\mathbf{1}_\Lambda\Big)^k
\end{equation}
for $k\in\{0,\ldots,p-1\}$. The factor $\mathbf{1}_\Lambda $ on the left is only relevant if $k=0$, otherwise it can be absorbed in $\big( \mathbf{1}_\Lambda \Op_L(\mathbf{1}_\Gamma)\mathbf{1}_\Lambda\big)^k$ up to area terms.

Next, we claim that
\begin{align}\label{Trace-Evaluation}
	\tr_{L^2(\R^d)\otimes\C^n} & \big[\big(D_L(A_1,A_2)\big)^p\big] \notag\\
	&= L^d\mathfrak{W}_0\Big(\tr_{\C^n}[A_2^p];\Lambda,\R^d\Big) \nonumber\\
	&\quad + L^d \ \sum_{k=1}^p\frac{1}{k!(p-k)!}\sum_{\pi\in \mathcal S_p} \, \mathfrak{W}_0\Big(\tr_{\C^n}\big[C_{\pi(1)}^{(k)}\cdots C_{\pi(p)}^{(k)}\big];\Lambda,\Gamma\Big)
		\nonumber\\
	&\quad + L^{d-1}\log L \; \sum_{k=1}^p\frac{1}{k!(p-k)!}\sum_{\pi\in \mathcal S_p} \,
			\mathfrak{W}_1\Big(\tr_{\C^n}\big[C_{\pi(1)}^{(k)}\cdots C_{\pi(p)}^{(k)}\big]\,\mathfrak{A}(\id^k;1);\partial\Lambda,
				\partial\Gamma\Big) \nonumber \\ 
	&\quad + o(L^{d-1}\log L),
\end{align}
as $L\to\infty$. The first line on the right-hand side arises from the direct computation of the term with $k=0$ in \eqref{Expansion-D_L} and \eqref{Expansion-Commuted}. The terms with $k\in\{1,\ldots,p-1\}$ in  \eqref{Expansion-D_L} and \eqref{Expansion-Commuted} give rise to the corresponding terms in the sums of the second and third line of \eqref{Trace-Evaluation} by applying Theorem~\ref{Asymptotics-Polynomial} with $B$ there given by $\phi^{p-k}C_{\pi(1)}^{(k)}\cdots C_{\pi(p)}^{(k)}$ and $A$ there given by $\mathbb{1}_{n}$. The terms with $k=p$ in the second and third line of \eqref{Trace-Evaluation} are obtained by applying Theorem \ref{Asymptotics-Polynomial} directly to the term with $k=p$ in \eqref{Expansion-D_L} by choosing $B$ in Theorem \ref{Asymptotics-Polynomial} as $\mathbb{1}_{n}$ and $A$ in Theorem \ref{Asymptotics-Polynomial} as $B$ from this proof.

In order to conclude the proof it just remains to rewrite the terms on the right-hand side of \eqref{Trace-Evaluation}.
Adding the first two terms, we get
\begin{align}
	\label{3.66}
	L^d \Bigg( & \mathfrak{W}_0  \big(\tr_{\C^n}[A_2^p];\Lambda,\R^d\big)
		+ \sum_{k=1}^p\frac{1}{k!(p-k)!}\sum_{\pi\in \mathcal S_p} \, \mathfrak{W}_0\Big(\tr_{\C^n}\big[C_{\pi(1)}^{(k)}\cdots C_{\pi(p)}^{(k)}\big];\Lambda,\Gamma\Big)\Bigg)
			\nonumber\\
	&= \left(\frac{L}{2\pi}\right)^d \int_\Lambda\bigg(\int_{\R^d} \tr_{\C^n}[A_2^p(x,\xi)] \,\d\xi \notag\\
	&\hspace{3cm}	+ \int_\Gamma \sum_{k=1}^p\frac{1}{k!(p-k)!}\sum_{\pi\in \mathcal S_p} 
		\tr_{\C^n}\big[C_{\pi(1)}^{(k)}\cdots C_{\pi(p)}^{(k)}(x,\xi)\big] \, \d\xi\bigg)\d x
			\nonumber\\
	&= \left(\frac{L}{2\pi}\right)^d \int_\Lambda \int_{\R^d} \tr_{\C^n}
			\big[(A_2+B\mathbf{1}_\Gamma)^p(x,\xi)\big]\,\d\xi\, \d x\nonumber \\ 
	&= L^d \Big( \mathfrak{W}_0 \big(\tr_{\C^n}[A_1^p];\Lambda,\Gamma\big)
			+\mathfrak{W}_0 \big(\tr_{\C^n}[A_2^p];\Lambda,\Gamma^c\big)\Big),
\end{align}
where we used the expansion \eqref{Expansion-Matrices} with $X=A_2$ and $Y=B\mathbf{1}_\Gamma$ in the second step.
For the third term in \eqref{Trace-Evaluation} we need to evaluate
\begin{align}
\label{3.67}
\sum_{k=1}^p\frac{1}{k!(p-k)!}\sum_{\pi\in \mathcal S_p} \,\tr_{\C^n}\big[C_{\pi(1)}^{(k)}\cdots C_{\pi(p)}^{(k)}\big]\,\mathfrak{A}(\id^k;1).
\end{align}
The definition of $\mathfrak{A}$ and linearity of the trace imply that \eqref{3.67} is equal to
\begin{align}
	\frac{1}{(2\pi)^2} \int_0^1 & \frac{\sum_{k=1}^p\frac{1}{k!(p-k)!}\sum_{\pi\in \mathcal S_p}
			\tr_{\C^n}\big[C_{\pi(1)}^{(k)}\cdots C_{\pi(p)}^{(k)}\big] (t^{k}-t)}{t(1-t)}\,\d t \nonumber\\ 
	&= \frac{1}{(2\pi)^2} \int_0^1 \frac{\tr_{\C^n}\big[\sum_{k=0}^p\frac{1}{k!(p-k)!}\sum_{\pi\in \mathcal S_p} 
			C_{\pi(1)}^{(k)}\cdots C_{\pi(p)}^{(k)}(t^k-t)-A_2^{p}(1-t)\big]}{t(1-t)}\,\d t \nonumber \\ 
	&= \frac{1}{(2\pi)^2} \int_0^1 \frac{\tr_{\C^n}\big[(A_2+Bt)^p
				-(A_2+B)^pt - A_2^{p}(1-t)\big]}{t(1-t)}\,\d t,
\end{align}
where we used the expansion \eqref{Expansion-Matrices} twice in the last step.
Inserting $B=A_1-A_2$, we arrive at the expression
\begin{align}
	\frac{1}{(2\pi)^2} \int_0^1 \frac{\tr_{\C^n}\big[\big(A_1t+A_2(1-t)\big)^p
			-A_1^pt-A_2^p(1-t)\big]}{t(1-t)}\,\d t
	= \mathfrak{U}(\id^p;A_1,A_2)
\end{align}
for \eqref{3.67}.
Together with the linearity of $\mathfrak{W}_{1}$ and \eqref{3.66} we get the asymptotic formula
\begin{align}
	\tr_{L^2(\R^d)\otimes\C^n}\big[\big(D_L(A_1,A_2)\big)^p\big] 
	&= L^d\mathfrak{W}_0\big(\tr_{\C^n}[A_1^p];\Lambda,\Gamma\big)
			+L^d\mathfrak{W}_0\big(\tr_{\C^n}[A_2^p];\Lambda,\Gamma^c\big)\nonumber \\
	&\quad + L^{d-1}\log L \; \mathfrak{W}_1\big(\mathfrak{U}(\id^p;A_1,A_2);\partial\Lambda,
			\partial\Gamma\big)\nonumber \\ 
	&\quad +o(L^{d-1}\log L),
\end{align}
	as $L\rightarrow\infty$.
\end{proof}

It is straightforward to also get an asymptotic formula for the operator $G_L(A_1,A_2)$.

\begin{cor}
	\label{Asymptotics-Gamma-Gamma-Complement-G_L}
	Let $p\in\N$, let $A_1,A_2,\Lambda$ and $\Gamma$ be as in Theorem \ref{Asymptotics-Gamma-Gamma-Complement}. 
	Then we get the following asymptotic formula
	\begin{align}
		\tr_{L^2(\R^d)\otimes\C^n}\left[\big(G_L(A_1,A_2;\Lambda,\Gamma)\big)^p\right]
		=& \ L^d \mathfrak{W}_0 \big(\tr_{\C^n}[ (\Real A_1)^p];\Lambda,\Gamma \big) \notag\\
			&+ L^{d}\mathfrak{W}_0 \big(\tr_{\C^n}[(\Real A_2)^p];\Lambda,\Gamma^c \big)  \nonumber\\
			&+L^{d-1}\log L \ \mathfrak{W}_1 \big(\mathfrak{U}(\id^p; \Real A_1, \Real A_2); 
				\partial\Lambda,	\partial\Gamma\big)\nonumber\\ 
			&+o(L^{d-1}\log L),
	\end{align}
	as $L\rightarrow\infty$.
\end{cor}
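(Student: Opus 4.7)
The plan is to reduce $G_L(A_1, A_2; \Lambda, \Gamma)$ to $D_L(\Real A_1, \Real A_2; \Lambda, \Gamma)$ modulo trace-norm remainders of order $L^{d-1}$, and then invoke Theorem \ref{Asymptotics-Gamma-Gamma-Complement} directly. Since the matrix $\Real A_i$ is Hermitian, we have $(\Op_L^l(\Real A_i))^* = \Op_L^r(\Real A_i)$, so
\[
\Real\big[\Op_L^l(\Real A_i)\big] = \tfrac{1}{2}\big(\Op_L^l(\Real A_i) + \Op_L^r(\Real A_i)\big),
\]
and consequently, writing $\Gamma_1 := \Gamma$ and $\Gamma_2 := \Gamma^c$,
\[
S_L(A_i; \Lambda, \Gamma_i) - T_L(\Real A_i; \Lambda, \Gamma_i)
= \tfrac{1}{2}\mathbf{1}_\Lambda \Op_L(\mathbf{1}_{\Gamma_i}) \big[\Op_L^r(\Real A_i) - \Op_L^l(\Real A_i)\big] \Op_L(\mathbf{1}_{\Gamma_i}) \mathbf{1}_\Lambda.
\]

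Next, I would produce a compactly supported effective symbol before invoking the difference $\Op_L^l - \Op_L^r$. Since $\Lambda$ is bounded, pick $\phi \in C_c^\infty(\R^d)$ with $\phi|_\Lambda = 1$; choose $\psi \in C_c^\infty(\R^d)$ equal to one on $\Gamma_i$ if that set is bounded, and on the momentum support of $A_i$ otherwise (which is well defined since $A_2$ is always compactly supported in momentum, and $A_1$ is so whenever $\Gamma$ is unbounded). A commutation argument modelled on \eqref{Gamma-Reduction-Compact-Symbol}, using Lemma \ref{Commutation-Lemma} and Lemma \ref{Exchange-Lemma}, replaces $\Op_L^t(\Real A_i)$ by $\Op_L^t(\phi \psi \Real A_i)$ inside the sandwich up to $\sim 0$ for $t\in\{l,r\}$. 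Because $\phi \psi \Real A_i$ has compact support in both variables, relation \eqref{Left-Right} of Lemma \ref{Exchange-Lemma} gives $\Op_L^l(\phi \psi \Real A_i) \sim \Op_L^r(\phi \psi \Real A_i)$. Hence $S_L(A_i; \Lambda, \Gamma_i) \sim T_L(\Real A_i; \Lambda, \Gamma_i)$, and therefore $G_L(A_1, A_2) \sim D_L(\Real A_1, \Real A_2)$.

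To lift this to the $p$th power I would use the telescoping identity $X^p - Y^p = \sum_{k=0}^{p-1} X^k(X-Y)Y^{p-1-k}$ together with the uniform operator norm bounds from Lemma \ref{Bounded-Lemma}, which yield $(G_L)^p \sim (D_L(\Real A_1, \Real A_2))^p$. Since $|\tr(\,\pmb\cdot\,)|$ is dominated by the trace norm, the corresponding scalar traces differ by $O(L^{d-1}) = o(L^{d-1} \log L)$. Applying Theorem \ref{Asymptotics-Gamma-Gamma-Complement} to $D_L(\Real A_1, \Real A_2; \Lambda, \Gamma)$---whose hypotheses are inherited from those on $A_1, A_2$ since taking the Hermitian part preserves smoothness and compact momentum support---then yields the claimed asymptotics. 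The main technical point to verify is the bookkeeping of the compact-support hypotheses in the commutation step when $\Gamma$ or $\Gamma^c$ is unbounded, but this closely mirrors the argument already carried out in the proof of Theorem \ref{Asymptotics-Gamma-Gamma-Complement}.
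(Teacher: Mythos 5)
Your proposal is correct and follows essentially the same route as the paper: reduce to compactly supported symbols via cutoffs, use $\big(\Op_L^l(\Real A_i)\big)^*=\Op_L^r(\Real A_i)$ together with relation \eqref{Left-Right} of Lemma \ref{Exchange-Lemma} to get $G_L(A_1,A_2)\sim D_L(\Real A_1,\Real A_2)$, lift to the $p$th power by a telescoping/H\"older argument with uniform operator-norm bounds, and then invoke Theorem \ref{Asymptotics-Gamma-Gamma-Complement}. This matches the paper's proof in all essentials.
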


\begin{proof}
By the definition of $G_L$ we have
\begin{align}
	G_L(A_1,A_2)-D_L(\Real A_1,\Real A_2) &= S_L(A_1;\Lambda,\Gamma)-T_L(\Real A_1;\Lambda,\Gamma)
		\nonumber \\ 
	&\quad + S_L(A_2;\Lambda,\Gamma^c) - T_L(\Real A_2;\Lambda,\Gamma^c).
\end{align}
As in (\ref{Gamma-Reduction}) and (\ref{Gamma-Complement-Reduction}), we can  assume that our symbols are compactly supported in both variables.
Using that $\big(\Op_L^l(\Real A_1)\big)^*=\Op_L^r(\Real A_1)$ and applying Lemma \ref{Exchange-Lemma}, 
we get
\begin{equation}
	\big\|S_L(A_1;\Lambda,\Gamma) - T_L(\Real A_1;\Lambda,\Gamma) \big\|_1 
	\leq  \frac{1}{2} \, \big\|\Op_L^r(\Real A_1)-\Op_L^l(\Real A_1) \big\|_1 
	\leq CL^{d-1}.
\end{equation}
Also applying this procedure with $A_2$ and $\Gamma^c$, we get
\begin{align}\label{Similarity-Real-Part}
G_L(A_1,A_2)\sim D_L(\Real A_1,\Real A_2).
\end{align}
Iterative use of (\ref{Similarity-Real-Part}) together with H\"older's inequality and the fact that $G_L(A_1,A_2)$ is uniformly bounded in $L$, yields
\begin{align}
\big(G_L(A_1,A_2)\big)^p\sim \big(D_L(\Real A_1,\Real A_2)\big)^p.
\end{align}
It just remains to apply Theorem \ref{Asymptotics-Gamma-Gamma-Complement} to get the desired asymptotic formula.
\end{proof}

%%%%%%%%%%%%%%%%%%%%%%%%%%%%%%%%%%%%%%%%%%%%%%%%%%%%%%%%%%%%%%%%%%
%%%%%%%%%%%%%%%%%%%%%%%%%%%%%%%%%%%%%%%%%%%%%%%%%%%%%%%%%%%%%%%%%%

\section{Closing the asymptotics}
\label{sec:closing-asymp}

We now want to establish the asymptotics from the theorem above for more general functions than polynomials. In the case of a non-self-adjoint operator, we can only extend this to analytic functions. For self-adjoint operators more general functions are accessible. \\
We first need to generalise some additional trace-class estimates from \cite{sobolevschatten} to matrix-valued symbols and unbounded domains.
\begin{lem}\label{Trace class estimates}
Let $B\in C^{\infty}_{b}(\R^{d} \times \R^{d}, \C^{n\times n})$ be a matrix-valued symbol with compact support in both variables. Let $\Lambda$ and $\Gamma$ be admissible domains. Then, for every $L\geq 2$, $t\in\{l,r\}$ and $q\in \,]0,1]$ we have
\begin{align}\label{Estimate-Log-Q}
	\big\|\mathbf{1}_\Lambda \Op_L(\mathbf{1}_\Gamma) \Op_L^t(B)\Op_L(\mathbf{1}_\Gamma)
		\mathbf{1}_{\Lambda^c}\big\|_q^q 
	\leq C L^{d-1}\log L,
\end{align}
and
\begin{align}\label{Estimate-Area-Q}
\big\|\Op_L(\mathbf{1}_\Gamma) \Op_L^t(B)\Op_L(\mathbf{1}_{\Gamma^c})\big\|_q^q \leq C L^{d-1}.
\end{align}
The constants $C>0$ depend on the symbol $B$ and the number $q$ but are independent of $\Lambda,\Gamma$ and $L$.
\end{lem}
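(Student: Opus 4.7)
The plan is to follow the same two-stage strategy used in the proof of Lemma~\ref{Commutation-Lemma}: first reduce the matrix-valued estimates to scalar ones by expanding $B$ in the matrix unit basis, and then extend from basic (or bounded) domains -- where the corresponding scalar-valued results are already in~\cite{sobolevschatten} -- to general admissible domains by localising with the help of the compact support of $B$.

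For the reduction, I would write $B=\sum_{\nu,\mu=1}^{n}(B)_{\nu\mu}\otimes E_{\nu\mu}$, apply the $q$-triangle inequality, and use the factorisation $\|X\otimes M\|_q^q=\|X\|_q^q\,\|M\|_q^q$ together with $\|E_{\nu\mu}\|_q=1$, exactly as in \eqref{Triangle-Commutation-Lemma}--\eqref{q-factorization}. After this step, it suffices to prove both estimates for a scalar symbol $b\in C_b^{\infty}(\R^d\times\R^d)$ with $\supp b\subset B_R(0)\times B_R(0)$ for some $R>0$.

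For~\eqref{Estimate-Log-Q}, the localisation is carried out in the spatial variable. Cover $\overline{\Lambda\cap B_{R+1}(0)}$ by finitely many balls $B_\rho(x_j)$, $j\in\mathcal{J}$, in which $\Lambda$ is represented by a basic domain $\Lambda_j$, and let $\{\phi_j\}_{j\in\mathcal{J}}$ be a smooth partition of unity subordinate to this cover with $\sum_j\phi_j=1$ on $\overline{\Lambda\cap B_{R+1}(0)}$. Since $b$ is spatially supported in $B_R(0)$, I would replace $\Op_L^t(b)$ by $\sum_j\Op_L^t(\phi_j b)$ via Lemma~\ref{Exchange-Lemma}, and then invoke Lemma~\ref{Commutation-Lemma} to swap $\mathbf{1}_\Lambda$, $\mathbf{1}_{\Lambda^c}$ for the basic-domain projections $\mathbf{1}_{\Lambda_j}$, $\mathbf{1}_{\Lambda_j^c}$ at the cost of $L^{d-1}$ Schatten-$q$-errors (each $\phi_j b$ is compactly supported in both variables, so Lemma~\ref{Commutation-Lemma} applies). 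This reduces the claim to a finite sum of scalar estimates on basic domains, which are exactly the scalar-valued analogues of~\eqref{Estimate-Log-Q} in~\cite{sobolevschatten} and furnish the bound $L^{d-1}\log L$.

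The proof of~\eqref{Estimate-Area-Q} is entirely analogous, but the localisation is now carried out in the momentum variable using the compact support of $b$ in $\xi$: one covers a ball containing $\supp_\xi b$ by finitely many balls on which $\Gamma$ is represented by a basic domain $\Gamma_k$, uses a smooth partition of unity $\{\psi_k\}$ in $\xi$, and commutes the resulting compactly supported symbols past $\Op_L(\mathbf{1}_\Gamma)$ and $\Op_L(\mathbf{1}_{\Gamma^c})$ via Lemma~\ref{Commutation-Lemma}, again at $L^{d-1}$ cost. The resulting basic-domain estimate is the scalar version from~\cite{sobolevschatten} and yields $L^{d-1}$ without a logarithm, reflecting the fact that the momentum projections on the two sides of $\Op_L^t(b)$ are complementary rather than matching. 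The main obstacle throughout is the book-keeping of localisation errors: one has to verify that every commutator correction incurred when trading admissible cut-offs for basic-domain cut-offs is at or below the $L^{d-1}$ threshold, so that neither of the target scales $L^{d-1}\log L$ and $L^{d-1}$ is violated. Lemma~\ref{Commutation-Lemma} delivers exactly this, and so the argument closes.
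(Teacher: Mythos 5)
Your matrix-unit reduction and the strategy of localising the compactly supported scalar symbol $b$ are both on the right track, and your treatment of \eqref{Estimate-Area-Q} would work (though it is more complicated than the paper's: the paper simply commutes $\Op_L^t(B)$ past one of the momentum projections via Lemma~\ref{Commutation-Lemma} and uses $\Op_L(\mathbf{1}_\Gamma)\Op_L(\mathbf{1}_{\Gamma^c})=0$, so no partition of unity is needed for that bound). For \eqref{Estimate-Log-Q}, however, your plan has a real gap. After localising only in the spatial variable $x$, you reduce to terms of the form $1_{\Lambda_j}\Op_L(1_\Gamma)\Op_L^t(\phi_j b)1_{\Lambda_j^c}$ in which $\Gamma$ is still an arbitrary, possibly unbounded, admissible domain. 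The scalar result that actually produces the $L^{d-1}\log L$ bound is \cite[Thm.\ 4.6]{sobolevschatten}, and it applies to \emph{basic} domains in both variables, i.e.\ you need some $1_{\Lambda_j}\Op_L^t(b\phi_j\psi_k)\Op_L(1_{\Gamma_k})1_{\Lambda_j^c}$ with $\Gamma_k$ basic. The paper therefore localises simultaneously in $x$ and in $\xi$, using two partitions of unity $\{\phi_j\}$ and $\{\psi_k\}$ subordinate to coverings of $\overline{\widetilde\Lambda\cap B_R(0)}$ and of $\overline{\Gamma\cap B_R(0)}$. Dropping the $\xi$-localisation means the final reference lemma cannot be invoked, so the reduction does not close.

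A secondary point: it is not accurate to say you can ``invoke Lemma~\ref{Commutation-Lemma} to swap $\mathbf{1}_\Lambda$, $\mathbf{1}_{\Lambda^c}$ for the basic-domain projections''. Lemma~\ref{Commutation-Lemma} gives a commutator bound $[\Op_L^t(A),\mathbf{1}_\Lambda]\sim_q 0$; the replacement of $\Lambda$ by $\Lambda_j$ requires, in addition, the exact equality $1_\Lambda\phi_j=1_{\Lambda_j}\phi_j$ together with separation-of-supports estimates in the style of \cite[Thm.\ 3.2]{sobolevschatten} (the paper inserts cutoff functions $h_j$ for precisely this purpose). You also omit the paper's first simplification, namely stripping the right-hand factor $\Op_L(\mathbf{1}_\Gamma)$ using $[\Op_L^t(B),\Op_L(\mathbf{1}_\Gamma)]\sim_q 0$ before starting the localisation; without it the book-keeping becomes heavier. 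None of these secondary issues is fatal, but the missing momentum localisation is.
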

\begin{proof}
We start by proving (\ref{Estimate-Log-Q}). 
By Lemma \ref{Commutation-Lemma} we get 
\begin{align}\label{Lemma-Lambda-Gamma-Matrix-Commutation}
\mathbf{1}_\Lambda \Op_L(\mathbf{1}_\Gamma) \Op_L^t(B)\Op_L(\mathbf{1}_\Gamma)\mathbf{1}_{\Lambda^c}\sim_q \mathbf{1}_\Lambda \Op_L(\mathbf{1}_\Gamma) \Op_L^t(B)\mathbf{1}_{\Lambda^c}.
\end{align}
By applying (\ref{q-factorization}) we get
\begin{align}
	\label{Lemma-Lambda-Gamma-Matrix-Reduction}
	\big\| \mathbf{1}_\Lambda \Op_L(\mathbf{1}_\Gamma) \Op_L^t(B)\mathbf{1}_{\Lambda^c} \big\|_q^q
	&= \bigg\| \sum_{\nu,\mu=1}^{n} 1_{\Lambda} \Op_L(1_{\Gamma}) \Op_L^t\big((B)_{\nu\mu}\big) 
			1_{\Lambda^c}\otimes E_{\nu\mu} \bigg\|_q^q \nonumber \\ 
	&\leq \sum_{\nu,\mu=1}^{n} \big\| 1_{\Lambda}\Op_L(1_{\Gamma}) \Op_L^t\big((B)_{\nu\mu}\big) 
		1_{\Lambda^c} \big\|_q^q.
\end{align}
Therefore, it suffices to show 
\begin{align}\label{Estimate-Log-Q-Scalar}
\|1_\Lambda \Op_L(1_\Gamma) \Op_L^t(b) 1_{\Lambda^c}\|_q^q\leq C L^{d-1}\log L,
\end{align}
for an arbitrary scalar-valued symbol $b\in C^{\infty}_{b}(\R^{d} \times \R^{d})$ with compact support in both variables.
It follows that there is some $R>0$ such that the support of $b$ is contained in $B_R(0)\times B_R(0)$.

As in the proof of Lemma \ref{Commutation-Lemma} define $\wtilde\Lambda:=\{x\in\R^d : \dist(x,\Lambda)<R\}$ and cover $\overline{\wtilde\Lambda\cap B_R(0)}$ with balls $B_\rho(x_j)$ of 
radius $\rho$ and centres $x_j$ such that $\Lambda\cap B_{4\rho}(x_j)=\Lambda_j\cap B_{4\rho}(x_j)$, where $\Lambda_j$ is a basic domain for every $j \in \mathcal{J} \subset\N$, a finite index set. Cover $\overline{\Gamma\cap B_R(0)}$ in the same manner with balls $B_{\rho_2}(\xi_k)$ of radius $\rho_2$ and centres $\xi_k$ for $k \in \mathcal{K} \subset\N$, a finite index set. Let $\{\phi_j\}_{j\in\mathcal{J}}$, $\{\psi_k\}_{k\in\mathcal{K}}$ be smooth and finite partitions of unity subordinate to these coverings, i.e.\ we have $\supp \phi_j \subset B_\rho(x_j)$, $\supp \psi_k \subset B_{\rho_2}(\xi_k)$, as well as 
\begin{align}
 \Phi\big{|}_{\overline{\wtilde\Lambda\cap B_R(0)}}=1 \ \ \text{ and } \ \  
 \Psi\big{|}_{\overline{\Gamma\cap B_R(0)}}=1,
\end{align}
where $\Phi := \sum_{j\in\mathcal{J}} \phi_j$ and $\Psi := \sum_{k\in\mathcal{K}} \psi_k$.
First we treat the more difficult case $t=r$.
We write
	\begin{align}
		\label{Lemma-Lambda-Gamma-Ref-1}
		\big\| 1_{\Lambda} \Op_L(1_{\Gamma})\Op_L^r(b) 1_{\Lambda^c} \big\|_q^q 
		&=  \big\| 1_{\Lambda} \Op_L(1_{\Gamma}) \Op_L^r\big( b(\Phi +1 - \Phi)\Psi\big) 
				1_{\Lambda^c} \big\|_q^q \nonumber \\ 
		&\leq \big\| 1_{\Lambda} \Op_L(1_{\Gamma}) \Op_L^r(b\Phi\Psi) 1_{\Lambda^c} \big\|_q^q &\nonumber \\ 
		&\quad + \big\| 1_{\Lambda} \Op_L(1_{\Gamma}) \Op_L^r\big(b\big(1 - \Phi)\big) \big\|_q^q.
\end{align}
As the symbol $b(1 - \Phi)$ is still smooth and compactly supported in both variables, 
we can apply Lemma \ref{Commutation-Lemma} to get 
\begin{align}
	1_{\Lambda} \Op_L(1_{\Gamma})\Op_L^r\big(b(1 - \Phi)\big) 
	\sim_q 1_{\Lambda} \Op_L^r\big(b(1 - \Phi)\big) \Op_L(1_{\Gamma}).
\end{align}
Note that as in the proof of Lemma \ref{Commutation-Lemma} the support of $b(1 - \sum_j\phi_j)$ in the first variable is contained in $\wtilde\Lambda^c$ and therefore is of distance at least $R$ to the support of the function $1_\Lambda$. Therefore, we can apply \cite[Thm.\ 3.2]{sobolevschatten} to get
\begin{equation}
	\label{Lemma-Lambda-Gamma-Ref-2}
	\big\| 1_{\Lambda} \Op_L^r\big(b(1 - \Phi)\big) \Op_L(1_{\Gamma}) \big\|_q^q \leq
	\big\| 1_{\Lambda} \Op_L^r (b ) (1 - \Phi) \big\|_q^q 
	\leq C.
\end{equation}
It remains to estimate 
\begin{align}\label{Lemma-Lambda-Gamma-Ref-3}
	\big\| 1_{\Lambda} \Op_L(1_{\Gamma}) \Op_L^r(b\Phi\Psi) 1_{\Lambda^c} \big\|_q^q 
	\le  \sum_{\substack{j\in\mathcal{J} \\ k\in\mathcal{K}}} \big\| 1_{\Lambda} \Op_L(1_{\Gamma_k}) \Op_L^r(b\phi_j \psi_k) 
		1_{\Lambda_j^c} \big\|_q^q.
\end{align}
As the sum is finite, it suffices to evaluate the individual terms. Using Lemma 
\ref{Commutation-Lemma} again, we get 
\begin{align}\label{Lemma-Lambda-Gamma-Ref-4}
1_{\Lambda} \Op_L(1_{\Gamma_k}) \Op_L^r(b\phi_j \psi_k) 1_{\Lambda_j^c} \sim_q 1_{\Lambda}  \Op_L^r(b\phi_j \psi_k) \Op_L(1_{\Gamma_k}) 1_{\Lambda_j^c}.
\end{align}
Let $h_j\in C^\infty(\R^d)$ be a smooth function such that $\|h_j\|_\infty\leq 1$, $\supp(h_j)\subset B_{4\rho}(x_j)$ and $h_j|_{B_{2\rho}(x_j)}=1$. Then 
\begin{align}\label{Lemma-Lambda-Gamma-Ref-5}
	\big\| 1_{\Lambda}  \Op_L^r(b\phi_j \psi_k) \Op_L(1_{\Gamma_k}) 1_{\Lambda_j^c}\big\|_q^q 
	&= \big\| 1_{\Lambda} (h_j+1-h_j) \Op_L^r(b\phi_j \psi_k) \Op_L(1_{\Gamma_k}) 1_{\Lambda_j^c}
	 		\big\|_q^q \nonumber \\ 
	&\leq \big\| 1_{\Lambda_j} \Op_L^r(b\phi_j \psi_k) \Op_L(1_{\Gamma_k}) 1_{\Lambda_j^c}\big\|_q^q 
			\nonumber \\ 
	&\quad + \| (1-h_j) \Op_L^r(b \psi_k) \phi_j \|_q^q.
\end{align}
To get a bound for the last term we use \cite[Thm.\ 3.2]{sobolevschatten} again, as the supports of $(1-h_j)$ and $\phi_j$ have distance at least $\rho$. This yields
\begin{align}\label{Lemma-Lambda-Gamma-Ref-6}
\| (1-h_j) \Op_L^r(b \psi_k) \phi_j \|_q^q \leq C.
\end{align}
In the remaining term all occurrences of admissible domains are replaced by basic domains and applying \cite[Thm.\ 4.6]{sobolevschatten} gives 
\begin{align}\label{Lemma-Lambda-Gamma-Ref-7}
	\big\| 1_{\Lambda_j}\Op_L^r(b\phi_j \psi_k)\Op_L(1_{\Gamma_k})1_{{\Lambda_j}^c} \big\|_q^q
	\leq C L^{d-1}\log L.
\end{align}
Combining (\ref{Lemma-Lambda-Gamma-Ref-1}) -- (\ref{Lemma-Lambda-Gamma-Ref-7}) yields 
(\ref{Estimate-Log-Q-Scalar}) and concludes the proof of (\ref{Estimate-Log-Q}) for $t=r$. The simpler case $t=l$ starts from rewriting the operator in (\ref{Estimate-Log-Q-Scalar}) as 
\begin{align}
	1_\Lambda \Op_L(1_\Gamma) \Op_L^l(b) 1_{\Lambda^c}
	\sim_q 1_\Lambda  \Op_L^l(b) \Op_L(1_\Gamma) 1_{\Lambda^c}
	= \sum_{\substack{j\in\mathcal{J} \\ k\in\mathcal{K}}} 1_{\Lambda_j} \phi_j \Op_L^l(b) \psi_k \Op_L(1_{\Gamma_k})1_{\Lambda^c}.
\end{align}
The further steps mirror the ones of the case $t=r$, starting from (\ref{Lemma-Lambda-Gamma-Ref-4}).

Finally, Inequality (\ref{Estimate-Area-Q}) is a direct consequence of Lemma $\ref{Commutation-Lemma}.$
\end{proof}

%%%%%%%%%%%%%%%%%%%%%%%%%%%%%%%%%%%%%%%%%%%%%%%%%%%%%%%%%%%%%

\subsection{Analytic functions}
\label{subsec:analytic}

We consider functions $g$ with $g(0)=0$ which are analytic in a disc $B_R(0) \subset\C$ about the origin with sufficiently large radius $R>0$, i.e. there exists $\omega_m\in\C,m\in\N,$ such that
\begin{align}
	\label{g-fct-def}
	g(z)=\sum_{m \in\N} \omega_m z^m, 
\end{align}
for all $z\in B_R(0).$ 
The trace of $D_L(g(A_1),g(A_2))$ can be computed explicitly and coincides (up to area terms) with the expected volume term.
\begin{lem}\label{Volume-Analytic}
Let $A_1,A_2\in C^{\infty}_{b}(\R^{d} \times \R^{d}, \C^{n\times n})$ be matrix-valued symbols with $A_2$ being compactly supported in the second variable. Further, let $\Lambda$ and $\Gamma$ be bounded admissible domains. Then for any function $g$ analytic in a disc $B_R(0) \subset\C$ of radius $R>\|\Op_L^l(A_1)\|+\|\Op_L^l(A_2)\|$ with $g(0)=0$, we have
\begin{align}
	\tr_{L^2(\R^d)\otimes\C^n}\big[D_L\big(g(A_1),g(A_2)\big)\big]
	&= L^d \Big[ \mathfrak{W}_0\big(\tr_{\C^n}[g(A_1)];\Lambda,\Gamma\big)
			+\mathfrak{W}_0\big(\tr_{\C^n}[g(A_2)];\Lambda,\Gamma^c\big) \Big]\nonumber\\
	&\quad + O(L^{d-1}),
\end{align}
as $L\rightarrow\infty.$
\end{lem}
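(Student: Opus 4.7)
The plan is to compute the trace essentially exactly, with the only error entering in a single cut-off-and-commute step. By linearity of $D_{L}$ in its symbol arguments,
\begin{equation*}
	\tr\big[D_L(g(A_1),g(A_2))\big] = \tr\big[T_L(g(A_1);\Lambda,\Gamma)\big] + \tr\big[T_L(g(A_2);\Lambda,\Gamma^c)\big],
\end{equation*}
so I would treat each summand separately, setting $\Gamma_{1} := \Gamma$ and $\Gamma_{2} := \Gamma^{c}$. The hypothesis on $R$ together with Lemma \ref{Bounded-Lemma} guarantee that $\sup_{x,\xi}\|A_j(x,\xi)\|_{\C^{n\times n}}<R$, so $g(A_j)(x,\xi):=g(A_j(x,\xi))$ is well-defined by the holomorphic functional calculus on matrices and yields a smooth bounded symbol in $C^{\infty}_{b}(\R^{d}\times\R^{d},\C^{n\times n})$. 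Since $g(0)=0$ and $A_{2}$ is compactly supported in the second variable, so is $g(A_{2})$.

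I would then run the cut-off-and-commute reduction from the proof of Theorem \ref{Asymptotics-Gamma-Gamma-Complement}, mimicking \eqref{Gamma-Reduction} and \eqref{Gamma-Complement-Reduction} with $A_{j}$ replaced by $g(A_{j})$. Using real cut-offs $\phi,\psi \in C_{c}^{\infty}(\R^{d})$ with $\phi|_\Lambda=1$ and $\psi|_\Gamma=1$ (and, for $j = 2$, a cut-off equal to $1$ on the $\xi$-support of $A_{2}$ in place of $\psi$), Lemma \ref{Commutation-Lemma} and Lemma \ref{Exchange-Lemma} applied to the compactly supported symbol $\phi\psi\, g(A_{j})$ yield
\begin{equation*}
	T_L(g(A_j);\Lambda,\Gamma_j) \sim \mathbf{1}_\Lambda \Op_L^l(g(A_j)) \Op_L(\mathbf{1}_{\Gamma_j}) \mathbf{1}_\Lambda,
\end{equation*}
meaning the difference has trace norm of order $L^{d-1}$.

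Passing to traces, cyclicity and $\mathbf{1}_{\Lambda}^{2}=\mathbf{1}_{\Lambda}$ reduce the right-hand side to $\tr[\Op_L^l(g(A_j))\Op_L(\mathbf{1}_{\Gamma_j})\mathbf{1}_\Lambda]$. Computing the kernel of $\Op_L^l(g(A_j))\Op_L(\mathbf{1}_{\Gamma_j})$ by composing the two standard-quantisation kernels, the intermediate position integral produces a delta that collapses the two momentum variables; integrating the resulting kernel on the diagonal over $\Lambda$ with the matrix trace gives exactly $L^{d}\mathfrak{W}_{0}(\tr_{\C^{n}}[g(A_{j})];\Lambda,\Gamma_{j})$, with no further remainder. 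Summing over $j \in \{1,2\}$ and collecting the $O(L^{d-1})$ errors from the reduction step yields the lemma.

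The main obstacle is obtaining the $O(L^{d-1})$ error rather than the weaker $o(L^{d-1}\log L)$ that would follow from directly applying Theorem \ref{Asymptotics-Polynomial} or Theorem \ref{Asymptotics-Gamma-Gamma-Complement} at $p=1$ (noting that $\mathfrak{A}(\id;1)$ and $\mathfrak{U}(\id;A_1,A_2)$ both vanish at $p=1$, so the enhanced-area term trivially disappears). The sharper bound requires using the Schatten-class commutator estimates from Section \ref{subsec:commute} directly, which give trace-norm error $O(L^{d-1})$ for symbols with compact support in both variables; the cut-off step is precisely what brings $g(A_{j})$ into this framework, using smoothness and boundedness granted by analyticity of $g$ and the bound on $\|A_j\|$.
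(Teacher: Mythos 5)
Your proposal is correct and follows essentially the same route as the paper: introduce cut-offs $\phi,\psi$ (and, for the $\Gamma^c$ term, a cut-off equal to $1$ on the $\xi$-support of $A_2$), use Lemmas \ref{Commutation-Lemma} and \ref{Exchange-Lemma} on the compactly supported symbols to reduce each $T_L\big(g(A_j);\Lambda,\Gamma_j\big)$ up to trace-norm errors of order $L^{d-1}$ to $\mathbf{1}_\Lambda \Op_L^l(\,\cdot\,)\Op_L(\mathbf{1}_{\Gamma_j})\mathbf{1}_\Lambda$, whose trace is then evaluated exactly by integrating the kernel along the diagonal, giving the $\mathfrak{W}_0$ terms; this is precisely the paper's argument, including the observation that $g(0)=0$ preserves compact support of $g(A_2)$ in $\xi$.

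One caveat in your justification of well-definedness: the inequality $\sup_{x,\xi}\|A_j(x,\xi)\|_{\C^{n\times n}}<R$ does \emph{not} follow from the hypothesis via Lemma \ref{Bounded-Lemma}, since that lemma bounds $\|\Op_L^l(A_j)\|$ from \emph{above} by symbol norms; for a fixed $L$ the operator norm of a quantisation can be much smaller than the supremum of its symbol (e.g.\ a scalar symbol $f(x)g(\xi)$ with tiny supports has small Hilbert--Schmidt, hence operator, norm while $\|f\|_\infty\|g\|_\infty=1$). What is actually needed is that the spectra of the matrices $A_j(x,\xi)$ lie in $B_R(0)$ so that $g(A_j)$ is defined pointwise; the paper glosses this point as well, and it is harmless under the intended reading of the radius hypothesis (compare the later condition $R>t_0$ with $t_A$ from \eqref{Definition_T_A}, whose symbol-norm part dominates $\sup_{x,\xi}\tr_{\C^n}|A(x,\xi)|$), but the specific appeal to Lemma \ref{Bounded-Lemma} argues in the wrong direction.
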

\begin{proof}
Let $\phi,\psi\in C_c^\infty(\R^d)$ be as in (\ref{Definition-Phi-Psi}), i.e. $\phi|_\Lambda = 1$  and   $\psi|_\Gamma= 1$. First note that the function $g$ being analytic guarantees that $g(A_j)\in C^{\infty}_{b}(\R^{d} \times \R^{d}, \C^{n\times n})$ is a well-defined symbol for $j\in\{1,2\}$. Further the property $g(0)=0$ ensures that these symbols are again compactly supported whenever the corresponding symbol $A_j$ is compactly supported.
Therefore, the previous results (\ref{Gamma-Reduction}) and (\ref{Gamma-Complement-Reduction}) on 
$C^{\infty}_{b}$-symbols apply and we get
\begin{align}\label{Volume-Analytic-Commutation}
	D_L\big(g(A_1),g(A_2)\big) 
	\sim \mathbf{1}_\Lambda Op_L^l\big(g(A_1)\phi\psi\big) \Op_L(\mathbf{1}_\Gamma)\mathbf{1}_\Lambda
		+\mathbf{1}_\Lambda \Op_L^l\big(g(A_2)\phi\big)\Op_L(\mathbf{1}_{\Gamma^c})\mathbf{1}_\Lambda.
\end{align} 
The trace of the right-hand side of (\ref{Volume-Analytic-Commutation}) can be computed explicitly by integrating its kernel along the diagonal which yields
\begin{multline}
	\tr_{L^2(\R^d)\otimes\C^n}\Big[\mathbf{1}_\Lambda \Op_L^l\big(g(A_1)\phi\psi\big)
			\Op_L(\mathbf{1}_\Gamma)\mathbf{1}_\Lambda
		+ \mathbf{1}_\Lambda \Op_L^l\big(g(A_2)\phi\big) 
			\Op_L(\mathbf{1}_{\Gamma^c})\mathbf{1}_\Lambda\Big] \\ 
	= L^d \Big[ \mathfrak{W}_0\big(\tr_{\C^n}[g(A_1)];\Lambda,\Gamma\big)
		+\mathfrak{W}_0\big(\tr_{\C^n}[g(A_2)];\Lambda,\Gamma^c\big) \Big].
\end{multline}
\end{proof}
The crucial remaining step is to estimate the following trace norm
\begin{align}\label{goal-analytic}
	\big\|g\big(D_L(A_1,A_2)\big)-D_L\big(g(A_1),g(A_2)\big)\big\|_1
\end{align}
and see that it only gives an enhanced area law with coefficient depending on the function $g$. We divide this task into several steps and begin by reducing the question to symbols which are compactly supported in both variables. For some of the steps we will need a large radius of convergence. In order to define it, we will first define the radius
\begin{align}\label{Definition_T_A}
t_A:= \sup_{L \ge 1}\|\Op_L^l(A)\|+\mathbf{N}^{(d+1,d+2)}(A),
\end{align}
for a single matrix-valued symbol $A\in C^{\infty}_{b}(\R^{d} \times \R^{d}, \C^{n\times n})$, where $\mathbf{N}^{(d+1,d+2)}(A)$ is defined in (\ref{Definition-N-Matrix}).

\begin{lem}
	\label{Compact-Support-Analytic}
	Let $A_1,A_2\in C^{\infty}_{b}(\R^{d} \times \R^{d}, \C^{n\times n})$ be matrix-valued symbols with $A_2$ being compactly supported 
	in the second variable. Further, let $\Lambda$ and $\Gamma$ be bounded admissible domains. 
	Then, there exist symbols $B_1,B_2\in C^{\infty}_{b}(\R^{d} \times \R^{d}, \C^{n\times n})$, compactly supported in both variables and with 
	\begin{equation}
		(A_1-B_1)\big|_{\Lambda\times\Gamma}=0,\qquad 
		(A_2-B_2)\big|_{\Lambda\times\R^d}=0
	\end{equation}	
	such that for any function $g$ analytic in a disc $B_R(0) \subset\C$ of radius 
	$R>t_0:=t_{A_1}+t_{A_2}+t_{B_1}+t_{B_2}$ with $g(0)=0$ we have
	\begin{align}\label{Compactification-Analytic-g-inside}
		D_L\big(g(A_1),g(A_2)\big) \sim D_L\big(g(B_1),g(B_2)\big)
	\end{align}
	and
	\begin{align}\label{Compactification-Analytic-g}
		g\big(D_L(A_1,A_2)\big) \sim g\big(D_L(B_1,B_2)\big).
	\end{align}
\end{lem}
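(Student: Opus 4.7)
The plan is to construct $B_1, B_2$ as cutoff versions of $A_1, A_2$, verify well-definedness of the power series via the hypothesis $R > t_0$, prove \eqref{Compactification-Analytic-g-inside} with a closed-form commutator identity applied monomial-by-monomial to the power series of $g$, and then obtain \eqref{Compactification-Analytic-g} via a telescoping argument using the case $g = \id$ of the first part.

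Concretely, I would fix $\phi, \psi \in C_c^\infty(\R^d;[0,1])$ with $\phi|_\Lambda = 1$ and $\psi|_\Gamma = 1$ (available since $\Lambda, \Gamma$ are bounded) and set $B_1 := \phi\psi A_1$, $B_2 := \phi A_2$. Both lie in $C_b^\infty(\R^d\times\R^d,\C^{n\times n})$ with compact support in both variables ($B_2$ inherits $\xi$-compactness from $A_2$), and the required restrictions follow by construction. The pointwise estimates $\|A_j(x,\xi)\|, \|B_j(x,\xi)\| \leq \mathbf{N}^{(0,0)}(A_j), \mathbf{N}^{(0,0)}(B_j) \leq t_{A_j}, t_{B_j}$, all strictly less than $R$, make $g(A_j), g(B_j)$ well-defined elements of $C_b^\infty$ via absolutely convergent matrix power series. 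Lemma~\ref{Bounded-Lemma} combined with $\sup_L \|D_L(A_1,A_2)\| \leq t_{A_1}+t_{A_2}$ and $\sup_L\|D_L(B_1,B_2)\| \leq t_{B_1}+t_{B_2}$, both below $t_0<R$, guarantees the operator-norm convergence of $g\bigl(D_L(A_1,A_2)\bigr)$ and $g\bigl(D_L(B_1,B_2)\bigr)$.

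For \eqref{Compactification-Analytic-g-inside}, since $\phi\psi$ and $\phi$ are scalar I would use $B_1^m = (\phi\psi)^m A_1^m$ and $B_2^m = \phi^m A_2^m$, and expand in power series to reduce the difference to $\sum_m \omega_m \bigl[ T_L(A_1^m(1-(\phi\psi)^m);\Lambda,\Gamma) + T_L(A_2^m(1-\phi^m);\Lambda,\Gamma^c) \bigr]$. The exact operator identity $\Op_L^l(A \phi^m \psi^m) = \phi^m \Op_L^l(A) \Op_L(\psi^m)$ (valid because $\phi(x),\psi(\xi)$ are scalar factors), together with $\Op_L(\psi^m)\Op_L(\mathbf{1}_\Gamma) = \Op_L(\mathbf{1}_\Gamma)$ and $\mathbf{1}_\Lambda \phi^m = \mathbf{1}_\Lambda$, then yields the closed-form expression
\[
T_L(A_1^m - B_1^m; \Lambda, \Gamma) = -\mathbf{1}_\Lambda\,[\Op_L(\mathbf{1}_\Gamma),\phi^m]\,\Op_L^l(A_1^m)\,\Op_L(\mathbf{1}_\Gamma)\,\mathbf{1}_\Lambda,
\]
and a symmetric identity with $\Op_L(\mathbf{1}_{\Gamma^c})$ for the $\Gamma^c$-term. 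Trace-norm bounds on the commutator factor (via classical Widom--Sobolev estimates for $[f, \Op_L(\mathbf{1}_\Gamma)]$ with smooth compactly supported $f$), combined with $\|\Op_L^l(A_j^m)\| \leq C q(m) t_{A_j}^m$ from Lemma~\ref{Bounded-Lemma} and Leibniz (using $\|\partial^\alpha \phi^m\|_\infty \leq C_\alpha m^{|\alpha|}$), give $\|T_L(A_j^m - B_j^m)\|_1 \leq C L^{d-1}\,p(m)\,t_{A_j}^m$ for some polynomial $p$. Since $|\omega_m|$ decays faster than $R^{-m}$ and $t_{A_j} < R$, summing over $m$ produces the required $O(L^{d-1})$ bound.

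For \eqref{Compactification-Analytic-g}, setting $X := D_L(A_1,A_2)$ and $Y := D_L(B_1,B_2)$, the linear case of \eqref{Compactification-Analytic-g-inside} just proved furnishes $\|X - Y\|_1 \leq C L^{d-1}$. Telescoping $X^m - Y^m = \sum_{k=0}^{m-1} X^k(X-Y)Y^{m-1-k}$ with $\|X\|, \|Y\| \leq t_0$ uniformly in $L$ gives $\|X^m - Y^m\|_1 \leq m\,t_0^{m-1}\,\|X-Y\|_1$, and summing against the Taylor coefficients yields
\[
\|g(X) - g(Y)\|_1 \leq C L^{d-1} \sum_{m\in\N} m\,|\omega_m|\,t_0^{m-1} < \infty,
\]
since $g'$ is analytic on $B_R(0)$ and $t_0 < R$. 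The main technical obstacle will be controlling $\|[\Op_L(\mathbf{1}_\Gamma),\phi^m]\|_1$: because $\phi^m$ is compactly supported only in the $x$-variable, Lemma~\ref{Commutation-Lemma} does not apply verbatim. One would invoke instead the classical Schatten--von Neumann commutator estimates underlying \cite[Chap.\ 11, Sect.\ 8, Thm.\ 11]{BirmanSolomjak}, possibly combined with the interpolation inequality $\|A\|_1 \leq \|A\|^{1-q}\,\|A\|_q^q$ to pass from an available $\mathcal{T}_q$-bound with $q<1$ to a trace-norm bound free of a spurious $\log L$ factor, while tracking the $m$-dependence polynomially to preserve summability against the rapidly decaying Taylor coefficients of $g$.
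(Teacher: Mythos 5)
Your construction $B_1=\phi\psi A_1$, $B_2=\phi A_2$ is exactly the paper's, and your proof of \eqref{Compactification-Analytic-g} (take $g=\id$ in the first part, telescope $X^m-Y^m=\sum_k X^k(X-Y)Y^{m-1-k}$, use $\|X\|,\|Y\|\le t_0<R$ and sum against the Taylor coefficients) coincides with the paper's argument. Where you genuinely diverge is \eqref{Compactification-Analytic-g-inside}: the paper never expands $g$ there. It applies the already established reductions \eqref{Gamma-Reduction} and \eqref{Gamma-Complement-Reduction} directly to the symbols $g(A_1)$ and $g(A_2)$ (admissible since $g$ analytic with $g(0)=0$ keeps them in $C^\infty_b$ with the right support properties), then uses the exact equality $\mathbf{1}_\Lambda \Op_L^l\big(g(A_1)\phi\psi\big)\Op_L(\mathbf{1}_\Gamma)\mathbf{1}_\Lambda=\mathbf{1}_\Lambda \Op_L^l\big(g(B_1)\big)\Op_L(\mathbf{1}_\Gamma)\mathbf{1}_\Lambda$ (the sandwiched operator only sees the symbol on $\Lambda\times\Gamma$, where $g(A_1)\phi\psi=g(A_1)=g(B_1)$), and commutes back with Lemma \ref{Commutation-Lemma}. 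This avoids all per-monomial bookkeeping and any tracking of growth in $m$. Your monomial-by-monomial route, with the exact identity $T_L(A_1^m-B_1^m;\Lambda,\Gamma)=-\mathbf{1}_\Lambda[\Op_L(\mathbf{1}_\Gamma),\phi^m]\Op_L^l(A_1^m)\Op_L(\mathbf{1}_\Gamma)\mathbf{1}_\Lambda$, is algebraically correct and can be made to work, but it is considerably more laborious.

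The gap is exactly the step you flag: the bound $\|[\Op_L(\mathbf{1}_\Gamma),\phi^m]\|_1\le CL^{d-1}p(m)$ with polynomial $p$, which your proposal does not actually establish. Neither source you invoke delivers it: \cite[Chap.~11, Sect.~8, Thm.~11]{BirmanSolomjak} concerns trace-class properties of products such as $\Op_L(1_\Gamma)1_\Lambda$ and gives order $L^d$, not an $O(L^{d-1})$ commutator bound, and the ``available $\mathcal{T}_q$-bound'' you would interpolate from is left unidentified -- Lemma \ref{Commutation-Lemma} indeed does not apply verbatim because $\phi^m$ is not compactly supported in $\xi$. The estimate is nonetheless true and follows from the paper's own tools once you use the boundedness of $\Gamma$: pick $\chi\in C_c^\infty(\R^d)$ with $\chi=1$ on a neighbourhood of $\overline{\Gamma}$; then exactly $\phi^m\Op_L(\mathbf{1}_\Gamma)=\Op_L^l(\phi^m\chi)\Op_L(\mathbf{1}_\Gamma)$ and $\Op_L(\mathbf{1}_\Gamma)\phi^m=\Op_L(\mathbf{1}_\Gamma)\Op_L^r(\phi^m\chi)$, so $[\Op_L(\mathbf{1}_\Gamma),\phi^m]$ is controlled by \eqref{Left-Right-Precise} together with Lemma \ref{Commutation-Lemma} (with $q=1$) applied to the symbol $\phi^m\chi$, which is compactly supported in both variables. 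Since $\|\phi\|_\infty\le1$, the symbol norms $\mathbf{N}^{(d+1,d+2)}(\phi^m\chi)$ grow only polynomially in $m$, preserving summability against $|\omega_m|$, and the $\Gamma^c$ term is handled via $[\Op_L(\mathbf{1}_{\Gamma^c}),\phi^m]=-[\Op_L(\mathbf{1}_\Gamma),\phi^m]$; your interpolation inequality is then not needed. With this repair your alternative route closes; as written, its crucial estimate rests on references that do not supply it.
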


\begin{proof} Let $L \ge 1$. We find the compactly supported symbols by introducing smooth cutoff functions in the same way as in Lemma \ref{Volume-Analytic}: Let $\phi,\psi\in C_c^\infty(\R^d)$ be as in (\ref{Definition-Phi-Psi}) and define $B_1:=A_1 \phi\psi$ and $B_2:=A_2 \phi$.  As in (\ref{Volume-Analytic-Commutation}) we get 
\begin{align}
	D_L\big(g(A_1),g(A_2)\big) \sim \mathbf{1}_\Lambda \Op_L^l\big(g(A_1)\phi\psi\big)
		\Op_L(\mathbf{1}_\Gamma)\mathbf{1}_\Lambda
	+ \mathbf{1}_\Lambda \Op_L^l\big(g(A_2)\phi\big) \Op_L(\mathbf{1}_{\Gamma^c})\mathbf{1}_\Lambda.
\end{align} 
Since $\phi|_\Lambda=1$ and $\psi|_\Gamma=1$ we have
\begin{align}
	\mathbf{1}_\Lambda \Op_L^l\big(g(A_1)\phi\psi\big) & \Op_L(\mathbf{1}_\Gamma) 
			\mathbf{1}_\Lambda 
		+ \mathbf{1}_\Lambda \Op_L^l\big(g(A_2)\phi\big) \Op_L(\mathbf{1}_{\Gamma^c})
			\mathbf{1}_\Lambda \nonumber \\
	&= \mathbf{1}_\Lambda \Op_L^l\big(g(B_1)\big) \Op_L(\mathbf{1}_\Gamma)\mathbf{1}_\Lambda
		+ \mathbf{1}_\Lambda \Op_L^l\big(g(B_2)\big) \Op_L(\mathbf{1}_{\Gamma^c})\mathbf{1}_\Lambda 
			\nonumber \\ 
	&\sim D_L\big(g(B_1),g(B_2)\big),
\end{align} 
where we used Lemma \ref{Commutation-Lemma} in the last step. This proves (\ref{Compactification-Analytic-g-inside}).

In order to prove (\ref{Compactification-Analytic-g}), we first note
\begin{align}\label{Compactification-Analytic}
D_L(A_1,A_2)\sim D_L(B_1,B_2),
\end{align} 
by setting $g=\id$ in (\ref{Compactification-Analytic-g-inside}). We now need to calculate the trace norm $\|(D_L(A_1,A_2))^m-(D_L(B_1,B_2))^m\|_1$ for arbitrary $m\in\N$. We note that the case $m=0$ need not be considered here due to $g(0)=0$. Repeatedly applying $(\ref{Compactification-Analytic})$ together with H\"older's inequality and the fact that $D_L(A_1,A_2)$ and $D_L(B_1,B_2)$ are bounded uniformly in $L$ by $t_0$, we get
\begin{align}
	\big\|\big(D_L(A_1,A_2)\big)^m - \big(D_L(B_1,B_2)\big)^m \big\|_1 \leq L^{d-1} Cm t_0^{m-1},
\end{align}
where the constant $C$ depends neither on $m$ nor $L$. 
Therefore, using the notation of \eqref{g-fct-def}, we obtain
\begin{align}
	\big\| g\big(D_L(A_1,A_2)\big) - g\big(D_L(B_1,B_2)\big) \big\|_1 
	&= \bigg\|\sum_{m=1}^\infty \omega_m \Big\{ \big(D_L(A_1,A_2)\big)^m - \big(D_L(B_1,B_2)\big)^m \Big\}\bigg\|_1 
		\nonumber \\ 
	&\leq L^{d-1} C\sum_{m=1}^\infty m|\omega_m| t_{0}^{m-1},
\end{align}
as $g$ is analytic in $B_R(0)$.
\end{proof}
The next Lemma \cite[Lemma 12.1]{sobolevlong} deals with the projections in the operator $D_L$.

\begin{lem}
	\label{Projections-Analytic}
	Let $X$ be a trace-class operator in a separable Hilbert space $\mathcal{H}$. 
	Let $P$ be an orthogonal projection in $\mathcal{H}$ and $g$ be as in \eqref{g-fct-def} a function 
	that is analytic in a disc $B_{R}(0) \subset\C$ of radius $R>\|X\|$. Then we have 
	\begin{align}
		\|g(PXP)-Pg(X)P\|_1\leq g^{|1|}(\|X\|)\|PX(\mathbb{1}_{\mathcal{H}}-P)\|_1,
	\end{align}
	where
	\begin{align}
		\label{Definition_G_1}
		g^{|1|}(z):=\sum_{m=2}^\infty (m-1)|\omega_m| z^{m-1}, \qquad z\in B_{R}(0).
	\end{align}	
\end{lem}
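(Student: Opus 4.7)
The plan is to expand $g$ as a power series, reduce to an estimate for each monomial $z^m$, and then produce a telescoping identity in which the factor $PX(\mathbb{1}_\mathcal{H}-P)$ appears explicitly. Since $\|PXP\|\le\|X\|<R$ and $g(0)=0$, both series $g(PXP)=\sum_{m\ge 1}\omega_m(PXP)^m$ and $Pg(X)P=\sum_{m\ge 1}\omega_m PX^m P$ converge absolutely in operator norm, and their $m=1$ contributions coincide. The problem thus reduces to bounding $\|PX^m P-(PXP)^m\|_1$ for $m\ge 2$, weighted by $|\omega_m|$, and summing over $m$.

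The key algebraic identity I would derive is
\begin{align}
PX^m P-(PXP)^m=\sum_{k=0}^{m-2}(PXP)^k\,PX(\mathbb{1}_\mathcal{H}-P)\,X^{m-k-1}P,
\end{align}
valid for every integer $m\ge 2$. Setting $Q:=\mathbb{1}_\mathcal{H}-P$, this follows by the telescoping argument based on the single-step identity $PX^{j}P-(PXP)(PX^{j-1}P)=PXQX^{j-1}P$, which in turn is just $PX(\mathbb{1}_\mathcal{H}-P)X^{j-1}P$ written out. The would-be boundary term at $k=m-1$ ends in $PXQP$ and vanishes thanks to $QP=0$, which is precisely the reason every surviving summand retains the full factor $PXQ$.

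Each summand contains the trace-class factor $PX(\mathbb{1}_\mathcal{H}-P)$, so H\"older's inequality for Schatten norms, together with $\|(PXP)^k\|\le\|X\|^k$ and $\|X^{m-k-1}\|\le\|X\|^{m-k-1}$, bounds each of the $m-1$ summands in trace norm by $\|X\|^{m-1}\|PX(\mathbb{1}_\mathcal{H}-P)\|_1$. Multiplying by $|\omega_m|$, summing over $m\ge 2$, and comparing with the definition \eqref{Definition_G_1} of $g^{|1|}$ yields the claim. Convergence of the resulting numerical series is automatic since $g^{|1|}$ inherits the radius of convergence $R$ from $g$, and $\|X\|<R$ by hypothesis. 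The only non-routine step is spotting the telescoping identity and recognising that the endpoint term is killed by $QP=0$; everything else amounts to standard Schatten-norm manipulations.
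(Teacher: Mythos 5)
Your argument is correct. Note, however, that the paper does not prove this statement at all: it is quoted verbatim from Sobolev \cite[Lemma 12.1]{sobolevlong}, so there is no in-paper proof to compare against. Your self-contained derivation is essentially the standard proof of that cited result: after cancelling the $m=1$ terms of the power series, the telescoping identity
\begin{align}
PX^mP-(PXP)^m=\sum_{k=0}^{m-2}(PXP)^k\,PX(\mathbb{1}_{\mathcal{H}}-P)\,X^{m-k-1}P
\end{align}
is exactly right (the $k=m-1$ term indeed dies because $(\mathbb{1}_{\mathcal{H}}-P)P=0$), H\"older's inequality in the form $\|ABC\|_1\leq\|A\|\,\|B\|_1\,\|C\|$ gives the bound $(m-1)\|X\|^{m-1}\|PX(\mathbb{1}_{\mathcal{H}}-P)\|_1$ for each monomial, and summation against $|\omega_m|$ reproduces $g^{|1|}(\|X\|)$; convergence is guaranteed by $\|PXP\|\leq\|X\|<R$ and the fact that $g^{|1|}$ has the same radius of convergence as $g$. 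The only point worth making explicit is that the absolute summability of the trace-norm bounds justifies interchanging the series with the trace-norm estimate, which you implicitly use; this is immediate and does not affect correctness.
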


\noindent
The next result deals with interchanging $g$ and $\Op_L^l$. 
\begin{lem}\label{Operator-Analytic}
Let $A\in C^{\infty}_{b}(\R^{d} \times \R^{d}, \C^{n\times n})$ be a matrix-valued symbol with compact support in both variables and 
let $g$ be a function that is analytic in a disc of radius $R>t_A$ about the origin and such that $g(0)=0$. Then we have
\begin{align}
	g\big(\Op_L^l(A)\big) \sim \Op_L^l\big(g(A)\big).
\end{align}
\end{lem}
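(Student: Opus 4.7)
The plan is to compare the two operators term by term after expanding $g$ in its Taylor series about the origin. Write $g(z) = \sum_{m=1}^{\infty} \omega_m z^m$, which is legitimate because $g(0)=0$ and $g$ is analytic in $B_R(0)$. By the definition \eqref{Definition_T_A} of $t_A$ and Lemma~\ref{Bounded-Lemma}, one has $\|\Op_L^l(A)\| \le t_A < R$ uniformly in $L$, so $g(\Op_L^l(A)) = \sum_m \omega_m (\Op_L^l(A))^m$ converges in operator norm. Likewise, the pointwise matrix norm satisfies $\|A(x,\xi)\| \le \mathbf{N}^{(d+1,d+2)}(A) \le t_A < R$, so matrix functional calculus produces a symbol $g(A) \in C^\infty_b(\R^d \times \R^d, \C^{n\times n})$ with $g(A) = \sum_m \omega_m A^m$ absolutely convergent and still compactly supported in both variables.

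The central step is the trace-norm estimate
\[
 a_m := \big\|(\Op_L^l(A))^m - \Op_L^l(A^m)\big\|_1 \le C L^{d-1} Q(m)\, t_A^m, \qquad m\ge 1,
\]
for some polynomial $Q$ and some constant $C$ depending on the support of $A$ but not on $L$ or $m$. I would prove this by induction on $m$. For the inductive step I use the telescoping identity
\begin{align*}
 (\Op_L^l(A))^{m+1} - \Op_L^l(A^{m+1})
 = \big( (\Op_L^l(A))^m - \Op_L^l(A^m) \big) \Op_L^l(A)
 + \big( \Op_L^l(A^m) \Op_L^l(A) - \Op_L^l(A^{m+1}) \big).
\end{align*}
The first summand is bounded by $a_m \cdot \|\Op_L^l(A)\| \le a_m\, t_A$ via H\"older's inequality and Lemma~\ref{Bounded-Lemma}. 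The second summand is the precise bound \eqref{Merge-Symbols-Precise} of Lemma~\ref{Exchange-Lemma}, which applies because both $A^m$ and $A$ remain compactly supported in both variables, and yields $C L^{d-1}\, \mathbf{N}^{(d+1,d+2)}(A^m)\, \mathbf{N}^{(d+1,d+2)}(A)$.

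The main obstacle is the $m$-dependence of $\mathbf{N}^{(d+1,d+2)}(A^m)$. Applying the Leibniz rule to the matrix product $A\cdots A$ and using the submultiplicativity $\tr_{\C^n}|XY|\le (\tr_{\C^n}|X|)\,\|Y\|_{\mathrm{op}}$ of matrix trace norms (with $\|\,\pmb\cdot\,\|_{\mathrm{op}}\le \tr_{\C^n}|\,\pmb\cdot\,|$), one obtains an estimate of the form $\mathbf{N}^{(d+1,d+2)}(A^m) \le P(m)\, \mathbf{N}^{(d+1,d+2)}(A)^m$ with $P$ a polynomial whose degree depends only on $d$ and $n$. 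Since $\mathbf{N}^{(d+1,d+2)}(A) \le t_A$, the induction closes and furnishes $a_m \le C L^{d-1} Q(m)\, t_A^m$.

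Finally, summing the estimate,
\[
 \big\| g(\Op_L^l(A)) - \Op_L^l(g(A)) \big\|_1
 \le \sum_{m=1}^{\infty} |\omega_m|\, a_m
 \le C L^{d-1} \sum_{m=1}^{\infty} |\omega_m|\, Q(m)\, t_A^m,
\]
and the series on the right converges because $g$ is analytic in a disc of radius strictly larger than $t_A$. This shows $g(\Op_L^l(A)) - \Op_L^l(g(A)) \sim 0$, which is the claim. The assumption $R > t_A$, rather than merely $R > \sup_L\|\Op_L^l(A)\|$, is dictated precisely by the need to absorb the polynomial factor $Q(m)$ and the growth of $\mathbf{N}^{(d+1,d+2)}(A^m)$ in the summation.
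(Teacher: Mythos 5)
Your proposal is correct and follows essentially the same route as the paper: the telescoping induction on $m$, the use of the precise bound \eqref{Merge-Symbols-Precise} for merging $\Op_L^l(A^m)\Op_L^l(A)$, the Leibniz-rule estimate $\mathbf{N}^{(d+1,d+2)}(A^m)\le P(m)\,\mathbf{N}^{(d+1,d+2)}(A)^m$ (the paper takes $P(m)=m^{2(d+2)}$ in \eqref{norm-powers}), and the summation of the power series using $R>t_A$ to absorb the polynomial growth. The only difference is cosmetic: you leave the polynomial $Q(m)$ implicit, while the paper records the explicit exponent $2(d+2)$.
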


\begin{proof}
Let $L\ge 1$. It suffices to show 
\begin{align}
	\label{op-ana-to-prove}
	\big\|\big(\Op_L^l(A)\big)^m-\Op_L^l(A^m)\big\|_1 \leq D L^{d-1} (m-1)^{2(d+2)}t_A^{m-1}
\end{align}
for every $m\in\N$ with the finite constant $D := C \mathbf{N}^{(d+1,d+2)}(A)$, where the constant $C$ 
does not depend on $L$ and $m$ and is specified below \eqref{induction-2}. Indeed, the power-series expansion \eqref{g-fct-def} of $g$ then yields 
\begin{align}
	\big\|g\big(\Op_L^l(A)\big) - \Op_L^l\big(g(A)\big)\big\|_1 
	&= \bigg\|\sum_{m=1}^\infty \omega_m \Big\{\big(\Op_L^l(A)\big)^m-\Op_L^l(A^m) \Big\}\bigg\|_1 \nonumber \\ 
	&\leq D L^{d-1} \sum_{m=1}^\infty |\omega_m|(m-1)^{2(d+2)}t_A^{m-1},  
\end{align}
which gives a finite constant, as $g$ is analytic in a disc of sufficiently large radius. 
We prove \eqref{op-ana-to-prove} by induction on $m$. 

For $m=1$, there is nothing to prove in \eqref{op-ana-to-prove}.  
%follows from (\ref{Merge-Symbols-Precise}) with 
%$D = C \mathbf{N}^{(d+1,d+2)}(A)$, where $C$ is the constant in (\ref{Merge-Symbols-Precise}). 
Now, suppose \eqref{op-ana-to-prove} 
holds for $m \in\N$. In order to prove \eqref{op-ana-to-prove} for $m+1$ we estimate
\begin{align}
	\big\|\big(\Op_L^l(A)\big)^{m+1} - \Op_L^l(A^{m+1})\big\|_1
	&\leq \big\|\big(\Op_L^l(A)\big)^{m+1} - \Op_L^l(A^{m})\Op_L^l(A) \big\|_1 \nonumber \\
	&\quad +\big\|\Op_L^l(A^{m})\Op_L^l(A) - \Op_L^l(A^{m+1})\big\|_1.
\end{align}
For the first term, we use the induction hypothesis and H\"older's inequality
\begin{align}
	\label{induction-1}
	\big\|\big(\Op_L^l(A)\big)^{m+1} - \Op_L^l(A^{m}) \Op_L^l(A) \big\|_1 
	&\leq D L^{d-1}(m-1)^{2(d+2)}t_A^{m-1}\|\Op_L^l(A)\| \notag\\
	&\le D L^{d-1}m^{2(d+2)}t_A^{m-1}\|\Op_L^l(A)\|.
\end{align}
The second term is treated with Inequality (\ref{Merge-Symbols-Precise}) applied to the symbols $A^{m}$ and $A$
\begin{align}
	\label{induction-2}
	\big\|\Op_L^l(A^{m})\Op_L^l(A) - \Op_L^l(A^{m+1})\big\|_1 
	&\leq C L^{d-1} \mathbf{N}^{(d+1,d+2)}(A^m)\, \mathbf{N}^{(d+1,d+2)}(A)  \notag\\
	&= D L^{d-1} \mathbf{N}^{(d+1,d+2)}(A^m).
\end{align}
Since the arising constant $C$ from (\ref{Merge-Symbols-Precise}) depends only on the support of the symbols $A^{m}$ and $A$, it is independent of $m$ and $L$.
It follows from the definition \eqref{Definition-N-Matrix} of the symbol norm that 
\begin{equation}
	\label{norm-powers}
 	\mathbf{N}^{(d+1,d+2)}(A^m) \le m^{2(d+2)} \big(\mathbf{N}^{(d+1,d+2)}(A)\big)^m 
 	\le m^{2(d+2)} t_{A}^{m-1}  \mathbf{N}^{(d+1,d+2)}(A).
\end{equation}
Inserting \eqref{norm-powers} into \eqref{induction-2}, adding \eqref{induction-1} and using once again the definition
\eqref{Definition_T_A} of $t_{A}$, we conclude the proof of the induction step.
\end{proof}

Combining the previous results, we get the desired estimate for (\ref{goal-analytic}).

\begin{lem}\label{Combination-Analytic}
	Let $A_1,A_2\in C^{\infty}_{b}(\R^{d} \times \R^{d}, \C^{n\times n})$ be matrix-valued symbols with $A_2$ being compactly supported in the second variable. Further, let $\Lambda$ and $\Gamma$ be bounded admissible domains. For any function $g$ analytic in a disc of radius $R>t_0$ with $g(0)=0$, there exist constants $C_1,C_2>0$, with $C_1$ independent of $g$, $\Lambda$ and $\Gamma$, such that
	\begin{equation}
		\label{Combination-Analytic-Statement}
		\big\|g\big(D_L(A_1,A_2)\big) - D_L\big(g(A_1),g(A_2)\big)\big\|_1 
		\leq C_1g^{|1|}(t_0)  L^{d-1}\log L  + C_2  L^{d-1} ,
	\end{equation}
	for every $L\geq 2$. The radius $t_0$ is defined in Lemma \ref{Compact-Support-Analytic} 
	and \eqref{Definition_T_A}.
\end{lem}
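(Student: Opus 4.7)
The plan is to peel off, in order, the two outer projections and finally the quantisation map $\Op_L^l$, at each stage comparing $g$ applied before and after the operation.

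First, I use Lemma~\ref{Compact-Support-Analytic} to replace $A_1,A_2$ with compactly supported symbols $B_1,B_2$ that agree with $A_1,A_2$ on $\Lambda\times\Gamma$, respectively $\Lambda\times\R^d$; both $g(D_L(A_1,A_2))\sim g(D_L(B_1,B_2))$ and $D_L(g(A_1),g(A_2))\sim D_L(g(B_1),g(B_2))$, so this step is absorbed into $C_2 L^{d-1}$. Next I write $D_L(B_1,B_2)=\mathbf{1}_\Lambda Q\mathbf{1}_\Lambda$ with
\begin{equation*}
Q:=P_\Gamma\Op_L^l(B_1)P_\Gamma+P_{\Gamma^c}\Op_L^l(B_2)P_{\Gamma^c},\qquad P_\Gamma:=\Op_L(\mathbf{1}_\Gamma),\ P_{\Gamma^c}:=\mathbb{1}-P_\Gamma,
\end{equation*}
and apply Lemma~\ref{Projections-Analytic} with $P=\mathbf{1}_\Lambda$ and $X=Q$. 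Since $\|Q\|\le\|\Op_L^l(B_1)\|+\|\Op_L^l(B_2)\|\le t_0$ and $g^{|1|}$ is monotone, and since Lemma~\ref{Trace class estimates}, equation~\eqref{Estimate-Log-Q} with $q=1$, estimates both terms in $\|\mathbf{1}_\Lambda Q(\mathbb{1}-\mathbf{1}_\Lambda)\|_1$ by $CL^{d-1}\log L$ (using Remark~\ref{Remark-Complement-Admissible}(c) for the $P_{\Gamma^c}$ piece), this step contributes exactly the term $C_1 g^{|1|}(t_0) L^{d-1}\log L$.

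It remains to show $\mathbf{1}_\Lambda g(Q)\mathbf{1}_\Lambda\sim D_L(g(B_1),g(B_2))$. The key observation is that the two summands of $Q$ are mutually annihilating: $P_\Gamma X P_\Gamma\cdot P_{\Gamma^c}YP_{\Gamma^c}=0$ and the reverse. Combined with $g(0)=0$ this gives the exact identity
\begin{equation*}
g(Q)=g\bigl(P_\Gamma\Op_L^l(B_1)P_\Gamma\bigr)+g\bigl(P_{\Gamma^c}\Op_L^l(B_2)P_{\Gamma^c}\bigr).
\end{equation*}
Applying Lemma~\ref{Projections-Analytic} a second time, now with $P=P_\Gamma$ respectively $P=P_{\Gamma^c}$ and $X=\Op_L^l(B_j)$, produces commutator terms of the form $\|P_\Gamma\Op_L^l(B_1)P_{\Gamma^c}\|_1$, which by Lemma~\ref{Trace class estimates}, equation~\eqref{Estimate-Area-Q}, are bounded by $CL^{d-1}$ and thus contribute only area terms. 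Finally, Lemma~\ref{Operator-Analytic} exchanges $g(\Op_L^l(B_j))$ for $\Op_L^l(g(B_j))$, again up to $O(L^{d-1})$. Reassembling, the error-free operator is $\mathbf{1}_\Lambda[P_\Gamma\Op_L^l(g(B_1))P_\Gamma+P_{\Gamma^c}\Op_L^l(g(B_2))P_{\Gamma^c}]\mathbf{1}_\Lambda=D_L(g(B_1),g(B_2))$, proving \eqref{Combination-Analytic-Statement}.

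The main obstacle is bookkeeping: one must check that the only source of the $L^{d-1}\log L$ contribution is the outer commutator with $\mathbf{1}_\Lambda$ via \eqref{Estimate-Log-Q}, while the inner commutator with $P_\Gamma$ and the exchange of $g$ with $\Op_L^l$ both produce only genuine area terms, so that the $\log L$ prefactor is controlled by $g^{|1|}(t_0)$ alone (independently of $\Lambda$, $\Gamma$) and the remaining $g$-dependent constants accumulate only in $C_2$. A minor subtlety is that Lemma~\ref{Projections-Analytic} is applied with $X=Q$ which is merely bounded rather than trace class; this is not an issue since the right-hand side of that lemma only requires $PX(\mathbb{1}-P)$ to be trace class, a consequence of Lemma~\ref{Trace class estimates}.
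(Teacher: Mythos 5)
Your proof follows the paper's argument step for step: reduce to compactly supported $B_1,B_2$ via Lemma~\ref{Compact-Support-Analytic}, peel off $\mathbf{1}_\Lambda$ with Lemma~\ref{Projections-Analytic} and \eqref{Estimate-Log-Q} to produce the $g^{|1|}(t_0)L^{d-1}\log L$ term, split $g(Q)$ using that the two summands of $Q$ annihilate each other, peel off $\Op_L(\mathbf{1}_\Gamma)$ and $\Op_L(\mathbf{1}_{\Gamma^c})$ with Lemma~\ref{Projections-Analytic} together with \eqref{Estimate-Area-Q}, and finish with Lemma~\ref{Operator-Analytic}. The closing aside is off, though: since $B_1,B_2$ are compactly supported in both variables, $\Op_L^l(B_j)$ and hence $Q$ are genuinely trace class, so Lemma~\ref{Projections-Analytic} applies as stated and there is no hypothesis to relax.
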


\begin{proof}
Let $L\geq 2$. As Lemma $\ref{Compact-Support-Analytic}$ yields
\begin{align}
	g\big(D_L(A_1,A_2)\big) \sim g\big(D_L(B_1,B_2)\big)
\end{align} 
and
\begin{align}
	D_L\big(g(A_1),g(A_2)\big) \sim D_L\big(g(B_1),g(B_2)\big),
\end{align}
it suffices to show (\ref{Combination-Analytic-Statement}) with $A_1$ and $A_2$ replaced by the compactly supported symbols $B_1$ and $B_2$. We prove this starting with the operator $g(D_L(B_1,B_2))$ on the left-hand side. 

Lemma $\ref{Projections-Analytic}$ applied to $\Op_L(\mathbf{1}_\Gamma)\Op_L^l(B_1)\Op_L(\mathbf{1}_\Gamma)+\Op_L(\mathbf{1}_{\Gamma^c})\Op_L^l(B_2)\Op_L(\mathbf{1}_{\Gamma^c})$ and $\mathbf{1}_\Lambda$ combined with Lemma $\ref{Trace class estimates}$ yields 
\begin{align}
 	\Big\|& g\big(D_L(B_1,B_2)\big)-\mathbf{1}_\Lambda \  g\Big( 
			\Op_L(\mathbf{1}_\Gamma)\Op_L^l(B_1)\Op_L(\mathbf{1}_\Gamma) 
	+ \Op_L(\mathbf{1}_{\Gamma^c}) \Op_L^l(B_2)\Op_L(\mathbf{1}_{\Gamma^c})\Big) \mathbf{1}_\Lambda\Big\|_1 \nonumber  \\  
	&\leq g^{|1|}(t_0)\Big( \big\| \mathbf{1}_\Lambda \Op_L(\mathbf{1}_\Gamma) \Op_L^l(B_1)\Op_L(\mathbf{1}_\Gamma)\mathbf{1}_{\Lambda^c}\big\|_1  %\nonumber \\
%	& \hspace*{4em} 
+ \big\| 1_\Lambda \Op_L(\mathbf{1}_{\Gamma^c}) \Op_L^l(B_2)\Op_L(\mathbf{1}_{\Gamma^c})\mathbf{1}_{\Lambda^c}\big\|_1 \Big)\nonumber \\ 
	&\leq C_1 g^{|1|}(t_0) \ L^{d-1}\log L.
\end{align}
Here, $C_1$ is the constant from (\ref{Combination-Analytic-Statement}). For its independence of $g$, $\Lambda$ and $\Gamma$ we refer to Lemma \ref{Trace class estimates}.
The power series expansion of the function $g$ implies
\begin{multline}
g\Big(\Op_L(\mathbf{1}_\Gamma)\Op_L^l(B_1)\Op_L(\mathbf{1}_\Gamma)+\Op_L(\mathbf{1}_{\Gamma^c}) \Op_L^l(B_2)\Op_L(\mathbf{1}_{\Gamma^c})\Big)\\=g\big(\Op_L(\mathbf{1}_\Gamma)\Op_L^l(B_1)\Op_L(\mathbf{1}_\Gamma)\big)+g\big(\Op_L(\mathbf{1}_{\Gamma^c}) \Op_L^l(B_2)\Op_L(\mathbf{1}_{\Gamma^c})\big).
\end{multline} 
Applying Lemma $\ref{Projections-Analytic}$ to $\Op_L^l(B_1)$  and $\Op_L(\mathbf{1}_\Gamma)$, followed by Lemma $\ref{Trace class estimates}$, yields
\begin{multline}
	\big\|g\big(\Op_L(\mathbf{1}_\Gamma)\Op_L^l(B_1)\Op_L(\mathbf{1}_\Gamma)\big)
		-\Op_L(\mathbf{1}_\Gamma)g\big(\Op_L^l(B_1)\big)\Op_L(\mathbf{1}_\Gamma)\big\|_1 \\ 
	\leq g^{|1|}(t_0) \big\|\Op_L(\mathbf{1}_\Gamma)\Op_L^l(B_1) \Op_L(\mathbf{1}_{\Gamma^c})\big\|_1 
	\leq C L^{d-1}.
\end{multline}
Applying Lemma $\ref{Projections-Analytic}$ again to $\Op_L^l(B_2)$ and $\Op_L(\mathbf{1}_{\Gamma^c})$, we conclude
\begin{multline}
\mathbf{1}_\Lambda \  g\Big(\Op_L(\mathbf{1}_\Gamma)\Op_L^l(B_1)\Op_L(\mathbf{1}_\Gamma)+\Op_L(\mathbf{1}_{\Gamma^c}) \Op_L^l(B_2)\Op_L(\mathbf{1}_{\Gamma^c})\Big) \mathbf{1}_\Lambda  \\ \sim  \mathbf{1}_\Lambda \Big( \Op_L(\mathbf{1}_\Gamma)g\big(\Op_L^l(B_1)\big)\Op_L(\mathbf{1}_\Gamma)+ \Op_L(\mathbf{1}_{\Gamma^c})g\big(\Op_L^l(B_2)\big)\Op_L(\mathbf{1}_{\Gamma^c})\Big)\mathbf{1}_\Lambda.
\end{multline}

\noindent
Finally, Lemma $\ref{Operator-Analytic}$ gives
\begin{align}
	\mathbf{1}_\Lambda \Op_L(\mathbf{1}_\Gamma) g\big(\Op_L^l(B_1)\big) \Op_L(\mathbf{1}_\Gamma)
		\mathbf{1}_\Lambda
	\sim T_L\big(g(B_1);\Lambda,\Gamma\big),
\end{align}
as well as
\begin{align}
	\mathbf{1}_\Lambda \Op_L(\mathbf{1}_{\Gamma^c}) g\big(\Op_L^l(B_2)\big)
		\Op_L(\mathbf{1}_{\Gamma^c}) \mathbf{1}_\Lambda
	\sim T_L\big(g(B_2);\Lambda,\Gamma^c\big).
\end{align}
The claim follows, because 
$D_L\big(g(B_1),g(B_2);\Lambda,\Gamma\big) = T_L\big(g(B_1);\Lambda,\Gamma\big)
	+T_L\big(g(B_2);\Lambda,\Gamma^c\big)$.
\end{proof}

We also need the following estimate for the coefficient $\mathfrak{W}_1.$

\begin{lem}
	\label{Coefficient-Analytic}
	Let $A_1,A_2\in C^{\infty}_{b}(\R^{d} \times \R^{d}, \C^{n\times n})$ be matrix-valued symbols. Let the function $g$ be analytic in a 
	disc of radius $R>t_0$ with $g(0)=0$. Let $\partial\Lambda$ and $\partial\Gamma$ have finite $(d-1)$-dimensional surface measure induced by Lebesgue measure on $\R^d$. 
	Then
	\begin{align}
		\big|\mathfrak{W}_1\big(\mathfrak{U}(g;A_1,A_2);\partial\Lambda,\partial\Gamma\big)\big|
		\leq \frac{|\partial\Lambda| |\partial\Gamma|}{(2\pi)^{2}} \, t_{0} g^{|1|}(t_0).
	\end{align}
\end{lem}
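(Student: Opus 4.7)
The plan is to reduce the claim to a pointwise supremum estimate on the scalar symbol $\mathfrak{U}(g;A_1,A_2)$. By definition~\eqref{coeff-w1-def} together with the trivial bound $|n_{\partial\Lambda}(x)\cdot n_{\partial\Gamma}(\xi)|\leq 1$ (the case $d=1$ being direct from the sum representation), one has
\begin{align*}
	|\mathfrak{W}_1(b;\partial\Lambda,\partial\Gamma)| \leq \frac{|\partial\Lambda|\,|\partial\Gamma|}{(2\pi)^{d-1}}\,\sup_{\partial\Lambda\times\partial\Gamma}|b|
\end{align*}
for any bounded scalar symbol $b$. It therefore suffices to show $\sup_{(x,\xi)}|\mathfrak{U}(g;A_1,A_2)(x,\xi)| \leq \frac{C}{(2\pi)^2}\,t_0\,g^{|1|}(t_0)$ for an absolute constant $C$, which will be absorbed into the extra factor $(2\pi)^{1-d}\leq (2\pi)^{-1}$ for $d\geq 2$.

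Fix such $(x,\xi)$ and set $B_j:=A_j(x,\xi)$. From Definition~\ref{defn-N-Norm-Matrix} and \eqref{Definition_T_A} one has the pointwise matrix estimates $\|B_j\|\leq \tr_{\C^n}|B_j|\leq \mathbf{N}^{(0,0)}(A_j)\leq t_{A_j}\leq t_0$, where $\|\cdot\|$ denotes the matrix operator norm. Expanding $g(z)=\sum_{m\geq 1}\omega_m z^m$ (no constant term since $g(0)=0$), the $m=1$ contribution to the numerator of $\mathfrak{U}$ vanishes identically, and the hypothesis $R>t_0$ permits exchanging sum and integral to obtain $\mathfrak{U}(g;B_1,B_2)=\sum_{m\geq 2}\omega_m\,\mathfrak{U}(\mathrm{id}^m;B_1,B_2)$. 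Thus it remains to estimate $|\mathfrak{U}(\mathrm{id}^m;B_1,B_2)|$ for each $m\geq 2$.

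Here the key step is an integral representation obtained via cyclicity of the trace. Writing $P(t):=B_1 t+B_2(1-t)=B_2+tC$ with $C:=B_1-B_2$, cyclicity gives $\frac{\d}{\d t}\tr_{\C^n}[P(t)^m]=m\,\tr_{\C^n}[P(t)^{m-1}C]$. Integration and rearrangement, combined with $P(0)^m=B_2^m$ and $P(1)^m=B_1^m$, yield
\begin{align*}
	\tr_{\C^n}\bigl[P(t)^m-B_1^m t-B_2^m(1-t)\bigr] = m(1-t)\int_0^t\tr_{\C^n}[P(s)^{m-1}C]\,\d s - mt\int_t^1\tr_{\C^n}[P(s)^{m-1}C]\,\d s.
\end{align*}
Using the standard trace inequality $|\tr_{\C^n}[XY]|\leq \|X\|\,\tr_{\C^n}|Y|$ together with $\|P(s)\|\leq t_0$ and $\tr_{\C^n}|C|\leq t_{A_1}+t_{A_2}\leq t_0$ gives $|\tr_{\C^n}[P(s)^{m-1}C]|\leq t_0^m$, whence the representation above implies $\bigl|\tr_{\C^n}[P(t)^m-B_1^m t-B_2^m(1-t)]\bigr|\leq 2m\,t_0^m\,t(1-t)$, and dividing by $t(1-t)$ and integrating produces $\int_0^1 |\tr_{\C^n}[\,\cdots\,]|/[t(1-t)]\,\d t\leq 2m\,t_0^m$.

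Summing against $|\omega_m|$ and using $m\leq 2(m-1)$ for $m\geq 2$ gives $\sum_{m\geq 2}|\omega_m|\cdot 2m\,t_0^m\leq 4t_0\,g^{|1|}(t_0)$, hence $|\mathfrak{U}(g;A_1,A_2)(x,\xi)|\leq \frac{4t_0\,g^{|1|}(t_0)}{(2\pi)^2}$ uniformly in $(x,\xi)$. Combined with the reduction step and the fact that $4\,(2\pi)^{1-d}\leq 1$ for $d\geq 2$, this yields the claim. The main technical subtlety is the non-commutativity of $B_1$ and $B_2$: a naive expansion of $(B_1 t+B_2(1-t))^m$ produces $2^m$ ordered matrix products and leads to combinatorially bad constants, whereas the differentiation-plus-cyclicity argument above reduces the entire problem to a single scalar integral and avoids any such blow-up.
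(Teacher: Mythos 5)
Your core argument is correct and takes a genuinely different route to the key pointwise estimate. Where the paper bounds $\sup_{\partial\Lambda\times\partial\Gamma}|\mathfrak{U}(\id^m;A_1,A_2)|$ by expanding $\big(A_1t+A_2(1-t)\big)^m$ with the symmetrised formula \eqref{Expansion-Matrices} and integrating the resulting $t$-weights term by term, you differentiate $t\mapsto\tr_{\C^n}[P(t)^m]$, use cyclicity of the trace, and convert the numerator of $\mathfrak{U}$ into a single integral representation; your identity and the resulting bound $2m\,t(1-t)\,t_0^m$ are correct, as is the exchange of sum and integral for $R>t_0$. Your approach is arguably cleaner per monomial, but your closing motivation is off the mark: the ``naive'' expansion does \emph{not} blow up combinatorially, since the multinomial coefficients resum exactly to $\big(\|A_1\|_{\infty,1}+\|A_2\|_{\infty,1}\big)^m\le t_0^m$, which is how the paper obtains the sharper per-monomial bound $(m-1)t_0^m$ and hence the pointwise bound $\tfrac{t_0}{(2\pi)^2}g^{|1|}(t_0)$ with no stray constant.

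This matters for the constant in the statement. Your estimate yields $\sup|\mathfrak{U}(g;A_1,A_2)|\le \tfrac{4t_0}{(2\pi)^2}g^{|1|}(t_0)$, and your absorption of the factor $4$ into $(2\pi)^{1-d}$ only works for $d\ge 2$. For $d=1$ (which is covered by \eqref{coeff-w1-def} and by the lemma, with the $0$-dimensional surface measure being counting measure) there is no such spare factor, and your argument only gives the stated inequality with an extra factor $4$ on the right-hand side. This is a minor quantitative shortfall rather than a structural flaw -- the application in Theorem \ref{Asymptotics-Analytic} only needs the bound up to a constant -- but as written your proof does not reach the exact constant claimed in the lemma in dimension one. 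Either sharpen your monomial estimate (e.g.\ aim for $(m-1)t_0^m$ instead of $2m\,t_0^m$) or fall back on the paper's expansion argument to close that case.
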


\begin{proof}
It suffices to show 
\begin{align}
\sup_{(x,\xi)\in\partial\Lambda\times\partial\Gamma}\Big|\big(\mathfrak{U}(g;A_1,A_2)\big)(x,\xi)\Big|\leq  \frac{t_{0}}{(2\pi)^{2}} \, g^{|1|}(t_0).
\end{align}
To start with we choose the test function $g$ as a monomial of order $m\in\N$ and work towards a pointwise estimate for the function 
\begin{align}
	\label{Coefficient-Analytic-PW}
	(2\pi)^2\mathfrak{U}(\id^m;A_1,A_2) 
	&= \int_0^1 \frac{\tr_{\C^n}\big[\big(A_1t+A_2(1-t)\big)^m-A_1^mt-A_2^m(1-t)\big]}{t(1-t)}\,\d t 
			\nonumber \\ 
	&= \tr_{\C^n}\bigg[ \int_0^1\bigg( \frac{(A_1t)^m-A_1^mt}{t(1-t)} + 
			\frac{\big(A_2(1-t)\big)^m-A_2^m(1-t)}{t(1-t)} \nonumber \\ 
	&\hspace{2.4cm} + \sum_{k=1}^{m-1}\frac{1}{k!(m-k)!}\sum_{\pi\in \mathcal S_m}Z_{\pi(1)}^{(k)}\cdots Z_{\pi(m)}^{(k)}
			\,\frac{t^k(1-t)^{m-k}}{t(1-t)} \bigg)\,\d t \bigg],
\end{align}
where the $Z_{j}^{(k)}$ for $j\in \{1,\ldots,m\}$ and $k \in \{1,\ldots,m-1\}$ are defined as in the expansion \eqref{Expansion-Matrices} with $X=A_2$, $Y=A_1$ and $p=m$.
The integral $\int_0^1 \frac{t^k(1-t)^{m-k}}{t(1-t)}\, \d t$ is bounded above by $1$ for every 
$k\in\{1,\ldots, m-1\}$. For the first integrand on the right-hand side of \eqref{Coefficient-Analytic-PW}, we get 
$|\int_0^1 \frac{t^m-t}{t(1-t)}\,\d t| \leq m-1$. By a change of variables, we also get 
$|\int_0^1 \frac{(1-t)^m-(1-t)}{t(1-t)} \, \d t| \leq m-1$ for the second integrand. 
Introducing the norm 
\begin{align}\label{Definition-Norm-Infinity-1}
\|A\|_{\infty,1}:= \sup_{(x,\xi)\in\partial\Lambda\times\partial\Gamma}\tr_{\C^n}|A(x,\xi)|
\end{align}
for matrix-valued functions $A$ on $\partial\Lambda\times\partial\Gamma$, we estimate the right-hand side of (\ref{Coefficient-Analytic-PW}) from above by
\begin{multline}
 	(m-1)\|A_1\|_{\infty,1}^m + (m-1)\|A_2\|_{\infty,1}^m 
		+ \sum_{k=1}^{m-1} \mybinom{m}{k} \|A_1\|_{\infty,1}^k
			\|A_2\|_{\infty,1}^{m-k}\nonumber \\ 
	\leq (m-1)\big(\|A_1\|_{\infty,1}+\|A_2\|_{\infty,1}\big)^m.
\end{multline}
Therefore, we conclude from the power-series expansion \eqref{g-fct-def} of $g$ that
\begin{align}
	\sup_{(x,\xi)\in\partial\Lambda\times\partial\Gamma}\big|\mathfrak{U}(g;A_1,A_2)\big|
	&\leq \sup_{(x,\xi)\in\partial\Lambda\times\partial\Gamma} \sum_{m=2}^\infty
		 |\omega_m|\, \big|\mathfrak{U}(\id^m;A_1,A_2)\big| \nonumber \\ 
	&\leq \frac{1}{(2\pi)^2}\sum_{m=2}^\infty |\omega_m|(m-1)t_0^m = \frac{t_{0}}{(2\pi)^2} \,g^{|1|}(t_0),
\end{align}
where we used that $\|A_1\|_{\infty,1}+\|A_2\|_{\infty,1}\leq t_{A_1}+t_{A_2}\leq t_0$ by (\ref{Definition_T_A}) and the definition of $g^{|1|}$ in (\ref{Definition_G_1}).
\end{proof}
We are now ready to close the asymptotics.

\begin{thm}
	\label{Asymptotics-Analytic}
	Let $A_1,A_2\in C^{\infty}_{b}(\R^{d} \times \R^{d}, \C^{n\times n})$ be matrix-valued symbols with 
	$A_2$ being compactly supported in the 
	second variable. Let $\Lambda$ be a bounded piece-wise $C^1$-admissible domain and $\Gamma$ 
	be a bounded piece-wise $C^3$-admissible domain. Let the function $h$ be analytic in a disc of 
	radius $R>t_0$ with $h(0)=0$, then
	\begin{align}
		\tr_{L^2(\R^d)\otimes\C^n}\left[h\big(D_L(A_1,A_2;\Lambda,\Gamma)\big)\right]
		=& \ L^d \Big( \mathfrak{W}_0\big(\tr_{\C^n}[h(A_1)];\Lambda,\Gamma\big) 
			+ \mathfrak{W}_0\big(\tr_{\C^n}[h(A_2)];\Lambda,\Gamma^c\big)\Big)\nonumber\\
		&+ L^{d-1}\log L \ \mathfrak{W}_1\big(\mathfrak{U}(h;A_1,A_2);\partial\Lambda,
				\partial\Gamma\big)\nonumber\\
		&+ o(L^{d-1}\log L),
	\end{align}
	as $L\rightarrow\infty$. Here, $t_0$ was defined in Lemma \ref{Compact-Support-Analytic} and 
	\eqref{Definition_T_A}, the coefficients 
	$\mathfrak{W}_0$ and $\mathfrak{W}_1$ in \eqref{coeff-w0-def} and \eqref{coeff-w1-def},
	respectively, and the symbol $\mathfrak{U}$ in \eqref{frakU-def}.
\end{thm}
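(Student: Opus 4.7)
The plan is to approximate the analytic test function $h$ by its Taylor polynomials, combining the polynomial asymptotics from Theorem~\ref{Asymptotics-Gamma-Gamma-Complement} with the three estimates just established (Lemmas~\ref{Volume-Analytic}, \ref{Combination-Analytic} and \ref{Coefficient-Analytic}). Writing $h(z)=\sum_{m\ge 1}\omega_m z^m$ on $B_R(0)$, set $h_N(z):=\sum_{m=1}^N \omega_m z^m$. Since $g\mapsto g(D_L)$ is linear on polynomials, and since the trace, the coefficients $\mathfrak{W}_0,\mathfrak{W}_1$ and the map $g\mapsto\mathfrak{U}(g;A_1,A_2)$ from \eqref{frakU-def} are manifestly linear in the test function, Theorem~\ref{Asymptotics-Gamma-Gamma-Complement} applied to each monomial $\id^m$ with $1\le m\le N$ immediately yields the claimed asymptotic formula for every polynomial $h_N$, with an $o(L^{d-1}\log L)$ remainder whose implicit constant may depend on $N$.

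Next, introduce the deficit functional $F_L(g)$ defined as the difference between $\tr_{L^2(\R^d)\otimes\C^n}[g(D_L(A_1,A_2;\Lambda,\Gamma))]$ and the right-hand side of the claimed asymptotic formula evaluated at $g$. By linearity of all its summands, $F_L(h)=F_L(h_N)+F_L(h-h_N)$, and $F_L(h_N)=o(L^{d-1}\log L)$ by the previous step.

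The core of the argument is to show $|F_L(h-h_N)|\le \epsilon_N\, L^{d-1}\log L+O_N(L^{d-1})$ with $\epsilon_N\to 0$ as $N\to\infty$. Applying Lemma~\ref{Combination-Analytic} with $g=h-h_N$ gives
\[
\big\|(h-h_N)(D_L)-D_L\big((h-h_N)(A_1),(h-h_N)(A_2)\big)\big\|_1\le C_1 (h-h_N)^{|1|}(t_0)L^{d-1}\log L+C_2 L^{d-1},
\]
and Lemma~\ref{Volume-Analytic} identifies the trace of $D_L\big((h-h_N)(A_1),(h-h_N)(A_2)\big)$ with the two $L^d\mathfrak{W}_0$-volume contributions to $F_L(h-h_N)$, up to a further $O(L^{d-1})$-error. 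These two steps cancel both the $L^d$-volume and $O(L^{d-1})$-contributions in $F_L(h-h_N)$, leaving only the boundary term $L^{d-1}\log L\,\mathfrak{W}_1(\mathfrak{U}(h-h_N;A_1,A_2);\partial\Lambda,\partial\Gamma)$, whose modulus is bounded by $C_3 (h-h_N)^{|1|}(t_0)L^{d-1}\log L$ via Lemma~\ref{Coefficient-Analytic}. One can thus take $\epsilon_N := (C_1+C_3)(h-h_N)^{|1|}(t_0)$.

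The main (and only delicate) point is that $\epsilon_N\to 0$. This is precisely where the strict inequality $R>t_0$ enters: the series $h^{|1|}(t_0)=\sum_{m\ge 2}(m-1)|\omega_m|t_0^{m-1}$ converges since $t_0<R$, so its tail $(h-h_N)^{|1|}(t_0)$ vanishes as $N\to\infty$. A standard $\varepsilon$--$N$ argument then concludes the proof: given $\varepsilon>0$, pick $N$ with $\epsilon_N<\varepsilon$; since both $F_L(h_N)$ and the $O_N(L^{d-1})$-part of $F_L(h-h_N)$ are $o(L^{d-1}\log L)$ for this fixed $N$, one obtains $\limsup_{L\to\infty}|F_L(h)|/(L^{d-1}\log L)\le\varepsilon$, hence $F_L(h)=o(L^{d-1}\log L)$ as $\varepsilon>0$ was arbitrary.
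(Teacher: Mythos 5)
Your proposal is correct and follows essentially the same route as the paper's proof: Taylor truncation of $h$, the polynomial asymptotics from Theorem~\ref{Asymptotics-Gamma-Gamma-Complement} by linearity, Lemma~\ref{Combination-Analytic} for the trace-norm control of the tail, Lemma~\ref{Coefficient-Analytic} for its $\mathfrak{W}_1$-coefficient, and Lemma~\ref{Volume-Analytic} for the volume terms, with the convergence of $h^{|1|}(t_0)$ (from $R>t_0$) making the tail contribution small. Your repackaging of the limsup/liminf argument into a single ``deficit functional'' $F_L$ is only a cosmetic difference from the paper's presentation.
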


\begin{proof}
Let $p\in\N$ and approximate $h$ by a polynomial in the following way
\begin{align}
	h=g_p+h_p, \ \ g_p(z):=\sum_{m=1}^p \omega_m z^m,  \qquad 
	h_p(z):= h(z)- g_{p}(z) =\sum_{m=p+1}^\infty \omega_m z^m,
\end{align}
where $\omega_{m} := h^{(m)}(0)/m!$ for $m\in\N$.
Let $\epsilon>0$. Then one can choose $p\in\N$ such that $ h_p^{|1|}(t_0)<\frac{\epsilon}{2}$. Lemma $\ref{Combination-Analytic}$ applied to $h_p$ yields
\begin{align}
	\big\|h_p\big(D_L(A_1,A_2)\big) - D_L\big(h_p(A_1),h_p(A_2)\big)\big\|_1 
	\leq C_1 \frac{\epsilon}{2} L^{d-1}\log L  + C_2  L^{d-1}.
\end{align}
As
\begin{align}
	\tr_{L^2(\R^d)\otimes\C^n}\big[ h\big(D_L(A_1,A_2)\big) &- D_L\big(h(A_1),h(A_2)\big)\big]
		\nonumber \\ 
	&\leq  \tr_{L^2(\R^d)\otimes\C^n}\big[g_p\big(D_L(A_1,A_2)\big)
		- D_L\big(g_p(A_1),g_p(A_2)\big)\big] \nonumber \\
	&\quad + \big\|h_p\big(D_L(A_1,A_2)\big) - D_L\big(h_p(A_1),h_p(A_2)\big)\big\|_1,
\end{align}
and the asymptotic formula is already established for $g_p$, we get
\begin{align}
	\underset{L\rightarrow\infty}{\limsup}\frac{\tr_{L^2(\R^d)\otimes\C^n}
		\big[ h\big(D_L(A_1,A_2)\big) - D_L\big(h(A_1),h(A_2)\big)\big]}{L^{d-1}\log L}
	\leq \mathfrak{W}_1(g_p) + C_1\frac{\epsilon}{2},
\end{align}
where 
$\mathfrak{W}_1(g_p):=\mathfrak{W}_1\big(\mathfrak{U}(g_p;A_1,A_2);\partial\Lambda,\partial\Gamma\big)$.
For the coefficient $\mathfrak{W}_1(h_p)$, Lemma \ref{Coefficient-Analytic} yields
\begin{align}
|\mathfrak{W}_1(h_p)|\leq C h_p^{|1|}(t_0)<C\frac{\epsilon}{2}.
\end{align}
Hence
\begin{align}
\mathfrak{W}_1(g_p)\leq \mathfrak{W}_1(h)+|\mathfrak{W}_1(h_p)|<\mathfrak{W}_1(h)+C\frac{\epsilon}{2}.
\end{align}
Therefore, 
\begin{align}
	\underset{L\rightarrow\infty}{\limsup}\frac{\tr_{L^2(\R^d)\otimes\C^n}
		\big[h\big(D_L(A_1,A_2)\big) - D_L\big(h(A_1),h(A_2)\big)\big]}{L^{d-1}\log L}
	\leq  \mathfrak{W}_1(h)+C\epsilon.
\end{align}
The corresponding lower bound for the liminf is found in the same way. As $\epsilon$ is arbitrary, we get
\begin{align}
	\underset{L\rightarrow\infty}{\lim} \frac{\tr_{L^2(\R^d)\otimes\C^n}
		\big[h\big(D_L(A_1,A_2)\big) - D_L\big(h(A_1),h(A_2)\big)\big]}{L^{d-1}\log L}
	= \mathfrak{W}_1(h).
\end{align}
The trace of $D_L(h(A_1),h(A_2))$ gives rise to the volume terms by Lemma \ref{Volume-Analytic}, and we obtain the desired asymptotics
\begin{align}
	\tr_{L^2(\R^d)\otimes\C^n}\big[h\big(D_L(A_1,A_2;\Lambda,\Gamma)\big)\big] 
	&=  L^d \Big( \mathfrak{W}_0\big(\tr_{\C^n}[h(A_1)];\Lambda,\Gamma\big)
			+\mathfrak{W}_0\big(\tr_{\C^n}[h(A_2)];\Lambda,\Gamma^c\big) \Big) \nonumber\\
	&\quad + L^{d-1}\log L \ \mathfrak{W}_1\big(\mathfrak{U}(h;A_1,A_2);\partial\Lambda,\partial\Gamma\big)
			\nonumber\\ 
	&\quad + o(L^{d-1}\log L),
\end{align}
as $L\rightarrow\infty$.
\end{proof}

%%%%%%%%%%%%%%%%%%%%%%%%%%%%%%%%%%%%%%%%%%%%%%%%%%%%%%%

\subsection{Smooth functions}
\label{subsec:smooth}

We continue with arbitrarily often differentiable functions $g$ vanishing at zero. In order to apply $g$ to an operator we restrict ourselves to the self-adjoint operator $G_L$, see (\ref{Definition_GL}) for the definition.

The trace of $D_L\big(g(\Real A_1),g(\Real A_2)\big)$ can again be computed explicitly and coincides (up to area terms) with the expected volume term. This is stated as
\begin{lem}\label{Volume-Smooth}
Let $A_1,A_2\in C^{\infty}_{b}(\R^{d} \times \R^{d}, \C^{n\times n})$ be matrix-valued symbols with $A_2$ being compactly supported in the second variable. Further, let $\Lambda$ and $\Gamma$ be bounded admissible domains. Then for any function $g\in C^\infty(\R)$ with $g(0)=0$, we have
\begin{multline}
	\tr_{L^2(\R^d)\otimes\C^n}\Big[D_L\big(g(\Real A_1),g(\Real A_2)\big)\Big] \\ 
	= L^d \Big[ \mathfrak{W}_0\big(\tr_{\C^n}[g(\Real A_1)];\Lambda,\Gamma\big)
		+\mathfrak{W}_0\big(\tr_{\C^n}[g(\Real A_2)];\Lambda,\Gamma^c\big) \Big]+O(L^{d-1}),
\end{multline}
as $L\rightarrow\infty.$
\end{lem}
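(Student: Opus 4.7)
The plan is to adapt the proof of Lemma~\ref{Volume-Analytic} to the smooth setting. The structural strategy is identical: introduce smooth cutoffs $\phi,\psi\in C_c^\infty(\R^d)$ satisfying $\phi|_\Lambda=1$ and $\psi|_\Gamma=1$ as in \eqref{Definition-Phi-Psi}, use the commutation machinery of Lemmas~\ref{Commutation-Lemma} and \ref{Exchange-Lemma} to reduce to left operators of symbols compactly supported in both variables, and then compute the resulting trace directly by integrating the kernel along the diagonal. The two points requiring care are (i) that $g(\Real A_j)$ is a bona fide $C_b^\infty$-symbol, and (ii) that it inherits the necessary compact support in the second variable from $A_2$.

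For point~(i), since $A_j\in C_b^\infty(\R^d\times\R^d,\C^{n\times n})$ the pointwise Hermitian matrix $\Real A_j(x,\xi)=\tfrac12(A_j+A_j^*)(x,\xi)$ is smooth and bounded in $(x,\xi)$, and its spectrum lies in a fixed bounded interval $[-M,M]\subset\R$. Since $g\in C^\infty(\R)$, replacing $g$ by a compactly supported smooth function that agrees with $g$ on a neighbourhood of $[-M,M]$ does not change $g(\Real A_j)$, and the map from Hermitian matrices of norm at most $M$ into $\C^{n\times n}$ afforded by the spectral calculus is itself $C^\infty$ on bounded sets (e.g.\ via the Daletski--Krein representation of derivatives, or via a Helffer--Sj\"ostrand/Cauchy-integral representation). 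Composition with the smooth map $(x,\xi)\mapsto \Real A_j(x,\xi)$ then yields $g(\Real A_j)\in C_b^\infty(\R^d\times\R^d,\C^{n\times n})$. For point~(ii), the assumption that $A_2$ has compact support in $\xi$ transfers to $\Real A_2$, and because $g(0)=0$ the spectral calculus gives $g(\Real A_2)(x,\xi)=0$ whenever $\Real A_2(x,\xi)=0$; hence $g(\Real A_2)$ is also compactly supported in the second variable.

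With these two points settled, the argument in \eqref{Volume-Analytic-Commutation} applies verbatim with $g(A_j)$ replaced by $g(\Real A_j)$: inserting $\Op_L^l(\phi\psi)$ to the right of $\mathbf{1}_\Lambda\Op_L(\mathbf{1}_\Gamma)$ (respectively $\Op_L^l(\phi)$ next to the $A_2$-term) and then using Lemma~\ref{Exchange-Lemma} to merge it with $\Op_L^l(g(\Real A_j))$ into a left operator whose symbol is compactly supported in both variables, followed by Lemma~\ref{Commutation-Lemma} to move it into place, produces
\begin{align*}
D_L\big(g(\Real A_1),g(\Real A_2)\big)
&\sim \mathbf{1}_\Lambda \Op_L^l\big(g(\Real A_1)\phi\psi\big)\Op_L(\mathbf{1}_\Gamma)\mathbf{1}_\Lambda\\
&\quad+\mathbf{1}_\Lambda \Op_L^l\big(g(\Real A_2)\phi\big)\Op_L(\mathbf{1}_{\Gamma^c})\mathbf{1}_\Lambda,
\end{align*}
so that the difference is trace-class with norm $O(L^{d-1})$. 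Taking the trace of the right-hand side by integrating the integral kernel along the diagonal and exploiting $\phi|_\Lambda=1$, $\psi|_\Gamma=1$ produces exactly $L^d\mathfrak{W}_0(\tr_{\C^n}[g(\Real A_1)];\Lambda,\Gamma)+L^d\mathfrak{W}_0(\tr_{\C^n}[g(\Real A_2)];\Lambda,\Gamma^c)$, which completes the proof.

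The main obstacle is really only point~(i): ensuring $g(\Real A_j)\in C_b^\infty$. Once that smooth functional calculus step is accepted, the remainder of the proof is a direct translation of Lemma~\ref{Volume-Analytic} and requires no new ideas; no logarithmic term appears because the trace-class estimate used here is the $\sim$-estimate coming from Lemma~\ref{Exchange-Lemma}, which is of order $L^{d-1}$ rather than $L^{d-1}\log L$.
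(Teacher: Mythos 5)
Your proposal is correct and follows essentially the same route as the paper: cutoffs $\phi,\psi$ as in \eqref{Definition-Phi-Psi}, the observation that smoothness of $g$ and $g(0)=0$ make $g(\Real A_j)$ an admissible $C^\infty_b$-symbol inheriting the compact support in $\xi$, the reduction \eqref{Volume-Analytic-Commutation} via Lemmas~\ref{Commutation-Lemma} and \ref{Exchange-Lemma}, and the explicit trace computation along the diagonal. Your added justification of the smooth functional calculus merely spells out what the paper asserts in one line, so nothing essential differs.
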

\begin{proof}
Let $\phi,\psi\in C_c^\infty(\R^d)$ be as in (\ref{Definition-Phi-Psi}), i.e. $\phi|_\Lambda = 1$  and   $\psi|_\Gamma= 1$. First note that the function $g$ being smooth guarantees that $g(\Real A_j)\in C^{\infty}_{b}(\R^{d} \times \R^{d}, \C^{n\times n})$ is a well-defined symbol for $j\in\{1,2\}$. Further, the property $g(0)=0$ ensures that these symbols are again compactly supported whenever the corresponding symbol $A_j$ is compactly supported.
Therefore, the previous results (\ref{Similarity-Real-Part}), (\ref{Gamma-Reduction}) and (\ref{Gamma-Complement-Reduction}) for $C^{\infty}_{b}$-symbols apply, and we get
\begin{align}
	D_L\big(g(\Real A_1),g(\Real A_2)\big) &\sim 
	\mathbf{1}_\Lambda \Op_L^l\big(g(\Real A_1)\phi\psi\big) \Op_L(\mathbf{1}_\Gamma)
			\mathbf{1}_\Lambda  \notag\\
	&\quad + \mathbf{1}_\Lambda \Op_L^l\big(g(\Real A_2)\phi\big)
		\Op_L(\mathbf{1}_{\Gamma^c})\mathbf{1}_\Lambda.
\end{align} 
The trace of the right-hand side can be computed explicitly by integrating its kernel along the diagonal which yields
\begin{multline}
	\tr_{L^2(\R^d)\otimes\C^n}\left[\mathbf{1}_\Lambda \Op_L^l\big(g(\Real A_1)\phi\psi\big)
			\Op_L(\mathbf{1}_\Gamma)\mathbf{1}_\Lambda 
		+\mathbf{1}_\Lambda \Op_L^l\big(g(\Real A_2)\phi\big) \Op_L(\mathbf{1}_{\Gamma^c})
			\mathbf{1}_\Lambda\right]\\ 
	= L^d \Big[ \mathfrak{W}_0\big(\tr_{\C^n}[g(\Real A_1)];\Lambda,\Gamma\big)
			+\mathfrak{W}_0\big(\tr_{\C^n}[g(\Real A_2)];\Lambda,\Gamma^c\big) \Big].
\end{multline}
\end{proof}

Now we want to estimate the trace norm
\begin{align}\label{Result-Smooth}
	\big\|g\big(G_L(A_1,A_2)\big)-D_L\big(g(\Real A_1),g(\Real A_2)\big)\big\|_1.
\end{align}
The line of argumentation is similar to the analytic case, where each ingredient is replaced by a counterpart for smooth functions.

\begin{lem}\label{Compact-Support-Smooth}
Let $A_1,A_2 \in C^{\infty}_{b}(\R^{d} \times \R^{d}, \C^{n\times n})$ be matrix-valued symbols with $A_2$ being compactly supported in the second variable. Further, let $\Lambda$ and $\Gamma$ be bounded admissible domains. Then there exist symbols $B_1,B_2\in C^{\infty}_{b}(\R^{d} \times \R^{d}, \C^{n\times n})$ which are compactly supported in both variables and with $(A_1-B_1)|_{\Lambda\times\Gamma}=0,(A_2-B_2)|_{\Lambda\times\R^d}=0$ such that for any smooth function $g\in C_c^\infty(\R)$  with $g(0)=0$ we have
\begin{align}\label{Compactification-Smooth-g-inside}
	D_L\big(g(\Real A_1),g(\Real A_2)\big) \sim D_L\big(g(\Real B_1),g(\Real B_2)\big)
\end{align} 
and
\begin{align}\label{Compactification-Smooth-g}
	g\big(G_L(A_1,A_2)\big)\sim g\big(G_L(B_1,B_2)\big).
\end{align} 
\end{lem}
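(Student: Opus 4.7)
The plan is to mirror the strategy of Lemma \ref{Compact-Support-Analytic} verbatim for the construction of the cut-off symbols, and to transport the resulting similarity through $g$ by a smooth functional calculus tool (Helffer--Sj\"ostrand) in place of the termwise power-series manipulation used in the analytic case.

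First I would take $\phi,\psi\in C_c^\infty(\R^d)$ as in (\ref{Definition-Phi-Psi}) and set $B_1:=A_1\phi\psi$ and $B_2:=A_2\phi$. Both symbols are in $C^\infty_b(\R^d\times\R^d,\C^{n\times n})$, compactly supported in both variables, and the required agreement properties $(A_1-B_1)|_{\Lambda\times\Gamma}=0$, $(A_2-B_2)|_{\Lambda\times\R^d}=0$ hold by construction. Because $g\in C_c^\infty(\R)$ satisfies $g(0)=0$, the compositions $g(\Real A_j)$ and $g(\Real B_j)$ are again matrix-valued symbols in $C^\infty_b$, and those built from $B_j$ remain compactly supported in both variables. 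The first equivalence (\ref{Compactification-Smooth-g-inside}) then follows by the same chain of reductions that produces (\ref{Volume-Analytic-Commutation}) in the analytic case: apply Lemma \ref{Commutation-Lemma} and Lemma \ref{Exchange-Lemma} to rewrite $D_L(g(\Real A_1),g(\Real A_2))$ with the $g(\Real A_j)$ multiplied by the cut-offs $\phi\psi$ and $\phi$, and use that on the effective supports $\Lambda\times\Gamma$ and $\Lambda\times\Gamma^c$ these symbols coincide with $g(\Real B_j)$.

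For (\ref{Compactification-Smooth-g}) I would first specialise the preceding argument to $g=\id$ (or combine (\ref{Similarity-Real-Part}) with (\ref{Gamma-Reduction}) and (\ref{Gamma-Complement-Reduction})) to conclude $G_L(A_1,A_2)\sim G_L(B_1,B_2)$. Both operators are self-adjoint and uniformly bounded in $L$ by Lemma \ref{Bounded-Lemma}, so the problem reduces to showing that this $O(L^{d-1})$ trace-norm bound on the difference is preserved under the functional calculus by $g$. I would invoke an almost analytic extension $\tilde g\in C_c^\infty(\C)$ of $g$ with $|\bar\partial\tilde g(z)|\leq C_N|\Im z|^N$ for any $N\in\N$, so that for any bounded self-adjoint $X$,
\begin{equation*}
g(X)=\frac{1}{\pi}\int_\C \bar\partial\tilde g(z)\,(z-X)^{-1}\,\d x\,\d y.
\end{equation*}
Applying this representation to $X=G_L(A_1,A_2)$ and $X=G_L(B_1,B_2)$, using the resolvent identity together with $\|(z-X)^{-1}\|\leq|\Im z|^{-1}$ and H\"older's inequality in the Schatten classes, yields
\begin{equation*}
\big\|g\big(G_L(A_1,A_2)\big)-g\big(G_L(B_1,B_2)\big)\big\|_1 \leq \frac{1}{\pi}\bigg(\int_\C|\bar\partial\tilde g(z)|\,|\Im z|^{-2}\,\d x\,\d y\bigg)\big\|G_L(A_1,A_2)-G_L(B_1,B_2)\big\|_1.
\end{equation*}
Choosing $N\geq 2$ makes the $z$-integral finite, giving an $O(L^{d-1})$ bound on the left-hand side and closing the argument.

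The main obstacle I expect is not in the structural steps, which parallel the analytic template closely, but in the bookkeeping of the Helffer--Sj\"ostrand step: one has to restrict the $z$-integration to a strip $|\Im z|\geq c$ around a compact real interval that contains the spectra of $G_L(A_1,A_2)$ and $G_L(B_1,B_2)$ uniformly in $L$ (guaranteed by the $L$-uniform operator-norm bound from Lemma \ref{Bounded-Lemma}), so that the product of $|\Im z|^{-2}$ and the decay of $\bar\partial\tilde g$ produces a constant depending only on $g$ and not on $L$.
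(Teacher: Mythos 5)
Your construction of $B_1,B_2$ and your treatment of \eqref{Compactification-Smooth-g-inside} coincide with the paper's. The difference is in \eqref{Compactification-Smooth-g}. The paper writes $g\big(G_L(A_1,A_2)\big)-g\big(G_L(B_1,B_2)\big)$ via the Fourier inversion formula as an integral of $E(t)=\e^{\i t G_L(A_1,A_2)}-\e^{\i t G_L(B_1,B_2)}$ against $\hat g(t)$, derives a first-order ODE for $E(t)$ with inhomogeneity of trace norm $O(L^{d-1})$, and integrates to get $\|E(t)\|_1\le C L^{d-1}|t|$, which combined with $\hat g\in\mathcal S(\R)$ closes the estimate. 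You instead use the Helffer--Sj\"ostrand representation, the second resolvent identity, and the bound $\|(z-X)^{-1}\|\le|\Im z|^{-1}$ for self-adjoint $X$, producing $\|g(X_1)-g(X_2)\|_1\le \frac1\pi\big(\int|\bar\partial\tilde g(z)||\Im z|^{-2}\dd x\dd y\big)\|X_1-X_2\|_1$; the rapid vanishing of $\bar\partial\tilde g$ near the real axis makes the $z$-integral finite (your final worry is moot: since $\tilde g$ is compactly supported, no further truncation of the integration domain is needed). Both arguments are standard smooth functional calculus estimates, are linear in $\|X_1-X_2\|_1$, and rely only on self-adjointness and the $L$-uniform bound $\|G_L(A_1,A_2)-G_L(B_1,B_2)\|_1\le CL^{d-1}$, which you correctly obtain from Lemma~\ref{Commutation-Lemma}, Lemma~\ref{Exchange-Lemma}, and \eqref{Similarity-Real-Part}. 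The paper's Duhamel route sets up the technique that is reused verbatim in the harder Lemma~\ref{Operator-Smooth}, which is presumably why the authors chose it; your Helffer--Sj\"ostrand route is equally valid here and arguably cleaner as a stand-alone argument.
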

\begin{proof} Let $L \ge 1$. We find the compactly supported symbols by introducing smooth cutoff functions in the same way as in the proof of Lemma \ref{Compact-Support-Analytic}: Let $\phi,\psi\in C_c^\infty(\R^d)$ be as in (\ref{Definition-Phi-Psi}) and define $B_1:=A_1 \phi\psi$ and $B_2:=A_2 \phi$.

Note that in the proof of (\ref{Compactification-Analytic-g-inside}) it was sufficient that $g(A_j),g(B_j)\in C^{\infty}_{b}(\R^{d} \times \R^{d}, \C^{n\times n})$ are well-defined symbols for $j\in\{1,2\}$, which are compactly supported whenever the respective symbols $A_j,B_j$ are compactly supported. For the Hermitian matrix-valued symbols $\Real A_j, \Real B_j$ this is the case if $g$ is smooth with $g(0)=0$. Therefore, we get
\begin{align}
	D_L\big( g(\Real A_1), g(\Real A_2)\big) 
	\sim D_L\big( g(\Real B_1), g(\Real B_2)\big).
\end{align}
This proves (\ref{Compactification-Smooth-g-inside}), even without the requirement that $g$ is compactly supported.

In order to prove (\ref{Compactification-Smooth-g}), we apply (\ref{Compactification-Smooth-g-inside}) with $g=\id$, i.e.
\begin{align}\label{Compactification-Smooth}
D_L(\Real A_1,\Real A_2)\sim D_L(\Real B_1,\Real B_2).
\end{align} 
Using (\ref{Similarity-Real-Part}) on both sides, this extends to
\begin{align}
 G_L\big(A_1,A_2\big)\sim G_L\big(B_1, B_2\big).
\end{align}
We write 
\begin{align}\label{Compactification-Fourier}
	g\big(G_L(A_1,A_2)\big) - g\big(G_L(B_1,B_2)\big)
	=\frac{1}{\sqrt{2\pi}}\int_{\R} \left( \e^{\i tG_L(A_1,A_2)}-\e^{\i tG_L(B_1,B_2)}\right) 
		\hat{g}(t) \,\d t,
\end{align}
where $\hat{g}\in\mathcal{S}(\R)$ is the Fourier transform of $g$. The operator $E(t):= \e^{\i tG_L(A_1,A_2)}-\e^{\i tG_L(B_1,B_2)}$ satisfies the following differential equation
\begin{align}
\i\partial_t E(t)+G_L(A_1,A_2)E(t)=-\big(G_L(A_1,A_2)-G_L(B_1,B_2)\big) \e^{\i tG_L(B_1,B_2)}=: M(t).
\end{align}
As $E(0)=0$, we get
\begin{align}
E(t)=-\i\int_0^t \e^{\i(t-s)G_L(A_1,A_2)} M(s)\,\d s.
\end{align}
As $G_L(B_1,B_2)$ is self-adjoint, we have
\begin{align}
\|M(t)\|_1\leq \|G_L(A_1,A_2)-G_L(B_1,B_2)\|_1\leq CL^{d-1},
\end{align}
where $C$ is independent of $t$. The operator $G_L(A_1,A_2)$ is also self-adjoint. Therefore,
\begin{align}
\|E(t)\|_1\leq CL^{d-1}t.
\end{align}
By (\ref{Compactification-Fourier}) we conclude that
\begin{align}
	\big\|g\big(G_L(A_1,A_2)\big)-g\big(G_L(B_1,B_2)\big)\big\|_1\leq CL^{d-1}.
\end{align}
\end{proof}

Next we quote a special case of \cite[Cor.\ 2.11]{sobolevc2} adapted to our situation.
\begin{lem}\label{Projections-C2}
Let $g\in C_c^2(\R)$ and $q\in \,]0,1[\,$. Let $X$ be a self-adjoint operator on a dense domain $\mathcal{D}$ in a separable Hilbert space $\mathcal{H}$ and $P$ be an orthogonal projection on $\mathcal{H}$ such that $P\mathcal{D} \subseteq \mathcal{D}$ and $PX(\mathbb{1}_{\mathcal{H}}-P)\in\mathcal{T}_q$. Then there exists a constant $C>0$, independent of $X,P$ and $g$, such that 
\begin{align}
\|g(PXP)P-Pg(X)\|_1\leq C \max_{0 \leq k \leq 2} \|g^{(k)}\|_\infty \|PX(\mathbb{1}_{\mathcal{H}}-P)\|_q^q.
\end{align}
\end{lem}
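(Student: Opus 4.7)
The lemma is a Schatten quasi-norm refinement of a standard commutator estimate, and the natural route is via the Helffer--Sj\"ostrand functional calculus. For $g\in C_c^2(\R)$ one constructs an almost analytic extension $\tilde g\in C_c^2(\C)$ satisfying
\begin{align}
|\bar\partial\tilde g(z)|\le C\,\max_{0\le k\le 2}\|g^{(k)}\|_\infty\,|\Im z|
\end{align}
together with the representation
\begin{align}
g(Y) = -\frac{1}{\pi}\int_\C \bar\partial\tilde g(z)(Y-z)^{-1}\,\d m(z),
\end{align}
valid for every self-adjoint operator $Y$, where $\d m$ denotes Lebesgue measure on $\C$.

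Applying this formula both with $Y=X$ and with the self-adjoint closure of the compression $Y=PXP$ gives
\begin{align}
g(PXP)P - Pg(X) = -\frac{1}{\pi}\int_\C \bar\partial\tilde g(z)\,\bigl[(PXP-z)^{-1}P - P(X-z)^{-1}\bigr]\,\d m(z).
\end{align}
The algebraic heart of the argument is then the identity
\begin{align}
(PXP-z)^{-1}P - P(X-z)^{-1} = (PXP-z)^{-1}\,V\,(X-z)^{-1}, \qquad V:=PX(\mathbb{1}_{\mathcal{H}}-P),
\end{align}
which one verifies by multiplying on the right by $X-z$, noting that $(PXP-z)^{-1}P(X-z) = P + (PXP-z)^{-1}V$, and using that $P$ commutes with $PXP$. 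The hypothesis places $V$ into $\mathcal{T}_q$, so the integrand has been reduced to a quantity carrying the required smallness.

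The trace-norm estimate I would then extract by combining three ingredients: the resolvent bounds $\|(X-z)^{-1}\|,\,\|(PXP-z)^{-1}\|\le |\Im z|^{-1}$; the Schatten ideal property $\|AVB\|_q\le \|A\|\,\|V\|_q\,\|B\|$; and the interpolation inequality $\|T\|_1\le \|T\|^{1-q}\,\|T\|_q^q$ valid for any compact operator $T$. Together these yield, for the integrand, a pointwise estimate involving $|\Im z|^{-2}\,\|V\|_q^q$. Integrating against $|\bar\partial\tilde g(z)|\,\d m(z)$ over the compact support of $\tilde g$ then gives the target bound, with all $g$-dependence absorbed into $\max_{0\le k\le 2}\|g^{(k)}\|_\infty$.

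The main obstacle is the integrability of $|\bar\partial\tilde g(z)|\,|\Im z|^{-2}$ near the real axis: with the standard $C^2$ extension only $|\bar\partial\tilde g(z)|\le C|\Im z|$ is available, so the naive bound fails by exactly one power of $|\Im z|^{-1}$. The resolution, following Sobolev, is either (i)~to use a Helffer--Sj\"ostrand extension that exploits the full jet $\{g^{(k)}\}_{k=0}^{2}$, gaining an extra factor of $|\Im z|$ at the cost of introducing the higher derivatives of $g$ into the constant; or (ii)~to iterate the resolvent identity once more, extracting a second factor of $V$ that absorbs the excess singularity. Either route is the substantive content behind Corollary~2.11 of \cite{sobolevc2} and yields the stated bound with a constant independent of $X$, $P$, and $g$.
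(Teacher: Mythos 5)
Your sketch is not a complete proof, and it is worth noting that the paper itself does not prove Lemma~\ref{Projections-C2} either: it is quoted, as a special case, from \cite[Cor.~2.11]{sobolevc2}, so the substantive content lives in that reference. Your setup (the Helffer--Sj\"ostrand representation and the identity $(PXP-z)^{-1}P-P(X-z)^{-1}=(PXP-z)^{-1}V(X-z)^{-1}$ with $V=PX(\mathbb{1}_{\mathcal{H}}-P)$) is correct, and you correctly identify the obstruction, but the proof stops exactly there: neither of the two proposed ``resolutions'' closes the gap. Route~(i) is unavailable under the stated hypotheses: an almost-analytic extension with $|\bar\partial\tilde g(z)|\lesssim|\Im z|^{1+\epsilon}$ forces $g$ to lie in a H\"older--Zygmund class strictly above $C^2$ (for $g\in C^2_c$ the best one gets from the order-one jet is $|\bar\partial\tilde g|\lesssim\|g''\|_\infty|\Im z|$, and gaining a further power requires $g'''$ or a modulus of continuity of $g''$), whereas the lemma allows only $\max_{0\le k\le2}\|g^{(k)}\|_\infty$ in the constant; hence $\int|\bar\partial\tilde g(z)|\,|\Im z|^{-2}\,\d m(z)$ diverges logarithmically and the representation alone yields nothing. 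Route~(ii) is only named, not performed: iterating the identity via the analogous formula for $\mathbb{1}_{\mathcal{H}}-P$ produces, besides a term quadratic in the off-diagonal parts, the leftover term $(PXP-z)^{-1}V\big((\mathbb{1}_{\mathcal{H}}-P)X(\mathbb{1}_{\mathcal{H}}-P)-z\big)^{-1}(\mathbb{1}_{\mathcal{H}}-P)$, which still carries a single factor of $V$ against two resolvents, so the $|\Im z|^{-2}$ singularity is not absorbed; bounding the $z$-integral of precisely such a (double-operator-integral type) term in trace norm by $\max_k\|g^{(k)}\|_\infty\,\|V\|_q^q$ is essentially the statement being proved, i.e.\ the nontrivial content of \cite{sobolevc2}, not a consequence of the ideal property and resolvent bounds.

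Two further quantitative points show the sketch does not yet produce the claimed bound even formally. First, the interpolation $\|T\|_1\le\|T\|^{1-q}\|T\|_q^q$ applied to $T=(PXP-z)^{-1}V(X-z)^{-1}$ gives $|\Im z|^{-2}\,\|V\|^{1-q}\|V\|_q^q$, not $|\Im z|^{-2}\,\|V\|_q^q$; estimating $\|V\|\le\|V\|_q$ only upgrades this to $\|V\|_q$, which is the wrong power of the quasi-norm when $\|V\|_q>1$. Second, even granting integrability in $\Im z$, the area integral runs over $\supp\tilde g$, so the constant obtained this way would depend on the diameter of $\supp g$, while the lemma requires $C$ to be independent of $g$ altogether. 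In short, the proposal reduces the lemma to its genuinely hard core and then defers it; to count as a proof it would have to reproduce (or replace) the machinery behind \cite[Cor.~2.11]{sobolevc2}, which it does not.
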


\begin{lem}\label{Operator-Smooth}
Let $A\in C^{\infty}_{b}(\R^{d} \times \R^{d}, \C^{n\times n})$ be a matrix-valued symbol with compact support in both variables and let $g\in C_c^\infty(\R)$ be a smooth function with compact support such that $g(0)=0$. Let the matrix-valued amplitude $A_g\in C^{\infty}_{b}(\R^{d} \times \R^{d} \times \R^{d}, \C^{n\times n})$ be defined as follows
\begin{align}
A_g(x,y,\xi):= g\Big(\tfrac{1}{2}\big(\Real A(x,\xi) + \Real A(y,\xi)\big)\Big)
\end{align}
for $x,y,\xi\in\R^d$.
Then we have
\begin{align}
g\big(\Real \Op_L^l(\Real A)\big) \sim \Op_L^{lr}(A_g)\sim  \Op_L^l\big(g(\Real A)\big).
\end{align}
\end{lem}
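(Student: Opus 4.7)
The key is to write $\Real\Op_L^l(\Real A) = \Op_L^{lr}(B)$ for the Hermitian matrix-valued amplitude
\begin{equation*}
B(x,y,\xi) := \tfrac12\big(\Real A(x,\xi)+\Real A(y,\xi)\big).
\end{equation*}
This holds because $\Op_L^l(\Real A)^{*} = \Op_L^r(\Real A)$ for the Hermitian symbol $\Real A$ and $\Op_L^l(\Real A)+\Op_L^r(\Real A) = 2\Op_L^{lr}(B)$ by linearity of the amplitude quantisation. Pointwise matrix functional calculus gives the identity $A_g = g\circ B$, so the lemma reduces to the two trace-norm estimates
\begin{align*}
g\big(\Op_L^{lr}(B)\big) &\sim \Op_L^{lr}\big(g(B)\big), \\
\Op_L^{lr}\big(g(B)\big) &\sim \Op_L^l\big(g(\Real A)\big).
\end{align*}

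The second estimate is morally an instance of the Left--LeftRight relation~\eqref{Left-LeftRight} in Lemma~\ref{Exchange-Lemma}, since $B(x,x,\xi) = \Real A(x,\xi)$ forces $g(B)(x,x,\xi) = g(\Real A(x,\xi))$. The catch is that $g(B)$ is compactly supported only in $\xi$, not in $x$ or $y$ individually, so Lemma~\ref{Exchange-Lemma} does not apply directly. I would remedy this by choosing $\chi_1,\chi_2\in C_c^\infty(\R^d)$ with $\chi_1 = 1$ on a neighbourhood $U$ of the $x$-support of $A$ and $\chi_2 = 1$ on $\supp\chi_1$, and writing
\begin{equation*}
g(B) = \chi_1(x)g(B) + \big(1-\chi_1(x)\big)\chi_2(y)g(B) + \big(1-\chi_1(x)\big)\big(1-\chi_2(y)\big)g(B).
\end{equation*}
The third summand vanishes identically, since on its support $A(x,\cdot) = A(y,\cdot) = 0$, hence $B = 0$ and $g(B) = g(0) = 0$. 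The first summand is compactly supported in $x$ and $\xi$, so Lemma~\ref{Exchange-Lemma} yields $\Op_L^{lr}(\chi_1(x)g(B)) \sim \Op_L^l(\chi_1 g(\Real A)) = \Op_L^l(g(\Real A))$. The second summand is compactly supported in $y$ and $\xi$ with diagonal $(1-\chi_1(x))\chi_2(x)g(\Real A(x,\xi))$, which vanishes identically because $(1-\chi_1)$ is supported outside $U\supset\supp_x A$; Lemma~\ref{Exchange-Lemma} thus contributes $\sim 0$ for this piece, completing the second estimate.

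For the functional-calculus estimate I would mirror the induction of Lemma~\ref{Operator-Analytic}. For polynomials $g(z) = z^p$, iterated use of Exchange~\eqref{Left-LeftRight} and Merge-Symbols~\eqref{Merge-Symbols}, combined with the cutoff decomposition above, gives $(\Op_L^{lr}(B))^p \sim \Op_L^{lr}(B^p)$ with constants growing polynomially in $p$. To pass to general $g\in C_c^\infty(\R)$, I would use the Fourier representation $g(X) = (2\pi)^{-1/2}\int_\R \hat g(t)\,\e^{\i tX}\,\d t$, valid for the self-adjoint operator $X = \Op_L^{lr}(B)$, and set $E_L(t) := \e^{\i t\Op_L^{lr}(B)} - \Op_L^{lr}(\e^{\i tB})$. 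The Duhamel formula
\begin{equation*}
E_L(t) = \i\int_0^t \e^{\i(t-s)\Op_L^{lr}(B)}\,\big[\Op_L^{lr}(B)\,\Op_L^{lr}(\e^{\i sB}) - \Op_L^{lr}(B\e^{\i sB})\big]\,\d s
\end{equation*}
reduces matters to a symbol-composition error that is $O(L^{d-1})$ with polynomial growth in $s$, again by Exchange and Merge applied to $B$ and $\e^{\i sB}$ (after the same matched-cutoff splitting and using the crude estimate $\mathbf{N}^{(d+1,d+1,d+2)}(\e^{\i sB}) \le C(1+|s|)^{d+2}$ from the boundedness of $B$). Integration against the rapidly decaying $\hat g\in\mathcal S(\R)$ then delivers $\|g(\Op_L^{lr}(B)) - \Op_L^{lr}(g(B))\|_1 = O(L^{d-1})$.

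\textbf{Main obstacle.} The central difficulty is that the amplitudes $B$ and $\e^{\i sB}$ are compactly supported only in $\xi$, so the Exchange, Merge and commutator estimates from Section~\ref{subsec:commute} cannot be invoked directly. The hypothesis $g(0) = 0$ coupled with the matched-cutoff decomposition above is the mechanism that rescues each such invocation; it is also what controls the symbol-composition error in the remainder $R_L(s) := \Op_L^{lr}(B)\,\Op_L^{lr}(\e^{\i sB}) - \Op_L^{lr}(B\e^{\i sB})$ arising from Duhamel. The secondary bookkeeping of polynomial-in-$s$ growth of symbol norms is innocuous because the Schwartz decay of $\hat g$ absorbs it inside the Fourier integral, exactly as powers of the degree $p$ were absorbed by analyticity in Lemma~\ref{Operator-Analytic}.
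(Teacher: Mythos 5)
Your proposal follows essentially the same route as the paper's proof: identify $\Real\Op_L^l(\Real A)=\Op_L^{lr}(A_{\id})$, compare $g\big(\Op_L^{lr}(A_{\id})\big)$ with $\Op_L^{lr}\big(g(A_{\id})\big)$ through the Fourier representation of $g$ and a Duhamel estimate of the composition remainder using the Exchange/Merge estimates, and then pass to $\Op_L^l\big(g(\Real A)\big)$ via \eqref{Left-LeftRight} after a cutoff splitting that exploits $g(0)=0$. The only repairs needed are minor: $\e^{\i sB}$ is neither compactly supported in $\xi$ nor vanishing where $B=0$, so your matched-cutoff decomposition must be applied to $\e^{\i sB}-\mathbb{1}_n$ rather than to $\e^{\i sB}$ itself (exactly as in the paper, where the identity parts cancel between the $\Op_L^{lr}$- and $\Op_L^l$-quantisations), and the symbol-norm growth is of order $|s|^{3d+4}$ rather than $(1+|s|)^{d+2}$ — harmless, as you note, because the Schwartz decay of $\hat g$ absorbs any polynomial growth.
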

\begin{proof}
Let $L \ge 1$. First note that $\Real \Op_L^l(\Real A)=\Op_L^{lr}(A_{\id}).$ Using linearity, we write
\begin{align}
	\label{Operator-Fourier}
	g\big(\Op_L^{lr}(A_{\id})\big)-\Op_L^{lr}(A_g) =\frac{1}{\sqrt{2\pi}}\int_{\R} \left( \e^{\i t\Op_L^{lr}(A_{\id})}-\Op_L^{lr}(\e^{\i tA_{\id}})\right) \hat{g}(t) \,\d t,
\end{align}
where $\hat{g}\in\mathcal{S}(\R)$ is again the Fourier transform of $g$.  The operator $E(t):= \Op_L^{lr}(\e^{\i tA_{\id}})- \e^{\i t\Op_L^{lr}(A_{\id})}$ satisfies the following differential equation
\begin{align}
\i\partial_t E(t)+\Op_L^{lr}(A_{\id})E(t)=\Op_L^{lr}(A_{\id})\Op_L^{lr}(\e^{\i tA_{\id}})-\Op_L^{lr}(A_{\id}\e^{\i tA_{\id}})=: M(t).
\end{align}
As $E(0)=0$, we get
\begin{align}\label{E-differential-equation}
E(t)=-\i\int_0^t \e^{\i(t-s)\Op_L^{lr}(A_{\id})} M(s)\,\d s.
\end{align}
The next goal is to estimate the trace norm of $M(t)$ for given $t\in\R$. The triangle inequality yields
\begin{align}\label{smooth-M-estimate}
	\|M(t)\|_1 &\leq \big\|\Op_L^{lr}(A_{\id})\Op_L^{lr}(\e^{\i tA_{\id}})
			-\Op_L^{lr}(A_{\id})\Op_L^{l}(\e^{\i t\Real A})\big\|_1 \nonumber \\ 
		&\quad + \big\|\Op_L^{lr}(A_{\id})\Op_L^{l}(\e^{\i t\Real A})
			-\Op_L^{l}(\Real A)\Op_L^{l}(\e^{\i t\Real A})\big\|_1 \nonumber \\ 
		&\quad + \big\|\Op_L^{l}(\Real A)\Op_L^{l}(\e^{\i t\Real A})
			-\Op_L^{l}(\Real A \e^{\i t\Real A})\big\|_1\nonumber \\ 
		&\quad + \big\|\Op_L^{l}(\Real A \e^{\i t\Real A}) 
			-\Op_L^{lr}( A_{\id} \e^{\i tA_{\id}})\big\|_1.
\end{align} 
We will now estimate each of these trace norms individually.
Let $\phi\in C_c^\infty(\R^{d})$ be a function such that $\phi(x)A(x,\xi)=A(x,\xi)$ for all $x,\xi\in\R^{d}$. We decompose the amplitude $\e^{\i tA_{\id}}$ as
\begin{equation}
\e^{\i tA_{\id}(x,y,\xi)} = \mathbb{1}_{n}+\phi(x)\big(\e^{\i tA_{\id}(x,y,\xi)}-\mathbb{1}_{n}\big)+\big(1-\phi(x)\big)\big(\e^{\i tA_{\id}(x,y,\xi)}-\mathbb{1}_{n}\big),
\label{e-amplitude-sum}
\end{equation}
where the second term is compactly supported in the variable $x$, and the third term is compactly supported in the variable $y$. 
Similarly, we write 
\begin{equation}
\e^{\i t\Real A(x,\xi)} = \mathbb{1}_{n}+\phi(x)\big(\e^{\i t \Real A(x,\xi)} -\mathbb{1}_{n}\big).
\end{equation}
Therefore and since $\e^{\i tA_{\id}(x,x,\xi)} = \e^{\i t\Real A(x,\xi)}$ for all $x,\xi \in\R^{d}$, 
the estimate \eqref{Left-LeftRight-Precise} is applicable and yields
\begin{align}
\big\|&\Op_L^{lr}(\e^{\i tA_{\id}})-\Op_L^{l}(\e^{\i t\Real A})\big\|_1 \nonumber \\
&\leq  \big\|\Op_L^{lr}\big(\phi (\e^{\i tA_{\id}}-\mathbb{1}_{n})\big)-\Op_L^{l}\big(\phi(\e^{\i t\Real A}-\mathbb{1}_{n})\big)\big\|_1+\big\|\Op_L^{lr}\big((1-\phi)(\e^{\i tA_{\id}}-\mathbb{1}_{n})\big)\big\|_1 \nonumber \\ 
&\leq C_1L^{d-1} \Big( \mathbf{N}^{(d+1,d+1,d+2)}\big(\phi(\e^{\i tA_{\id}}-\mathbb{1}_{n})\big)+\mathbf{N}^{(d+1,d+1,d+2)}\big((1-\phi)(\e^{\i tA_{\id}}-\mathbb{1}_{n})\big)\Big).
\label{e-amplitude-sum2}
\end{align}
Writing $A_{\id}(x,y,\xi)=\tfrac{1}{2}\Real A(x,\xi)+\tfrac{1}{2}\Real A(y,\xi)$, the estimate \eqref{Left-Right-Precise} yields
\begin{equation}
\big\|\Op_L^{lr}(A_{\id})-\Op_L^{l}(\Real A)\big\|_1 = \tfrac{1}{2}\big\|\Op_L^{r}(\Real A)-\Op_L^{l}(\Real A)\big\|_1 \leq C_2L^{d-1} \mathbf{N}^{(d+1,d+2)}(\Real A). 
\end{equation}
As the symbol $A$ is compactly supported in both variables, the estimate \eqref{Merge-Symbols-Precise-2} yields
\begin{multline}
	\big\|\Op_L^{l}(\Real A)\Op_L^{l}(\e^{\i t\Real A})	-\Op_L^{l}(\Real A \e^{\i t\Real A})\big\|_1 \\
	\leq C_3L^{d-1} \mathbf{N}^{(d+1,d+2)}(\Real A) \,\mathbf{N}^{(d+1,d+2)}(\e^{\i t\Real A}).
\end{multline}
We define the amplitudes $A^{(1)}(x,y,\xi):= \Real A(x,\xi)\e^{\i tA_{\id}(x,y,\xi)}$ and $A^{(2)}(x,y,\xi):= \Real A(y,\xi)\e^{\i tA_{\id}(x,y,\xi)}$, which are compactly supported in the variables $x$, respectively $y$. With estimate \eqref{Left-LeftRight-Precise} we obtain
\begin{align}\label{amplitude-estimate-4}
\big\|\Op_L^{l}(&\Real A \e^{\i t\Real A}) 
			-\Op_L^{lr}( A_{\id} \e^{\i tA_{\id}})\big\|_1\nonumber \\ &\leq \tfrac{1}{2}\big\|\Op_L^{l}(\Real A \e^{\i t\Real A}) -\Op_L^{lr}( A^{(1)})\big\|_1+\tfrac{1}{2}\big\|\Op_L^{l}(\Real A \e^{\i t\Real A}) -\Op_L^{lr}( A^{(2)})\big\|_1\nonumber \\ &\leq C_4L^{d-1} \mathbf{N}^{(d+1,d+1,d+2)}(A^{(1)}).
\end{align}
Combining the estimates \eqref{e-amplitude-sum2} -- \eqref{amplitude-estimate-4}, we obtain the following bound for the right-hand side of \eqref{smooth-M-estimate}
\begin{align}
\label{4er-summe}
&C_1L^{d-1} \|\Op_L^{lr}(A_{\id})\| \, \Big( \mathbf{N}^{(d+1,d+1,d+2)}\big(\phi(\e^{\i tA_{\id}}-\mathbb{1}_{n})\big)+\mathbf{N}^{(d+1,d+1,d+2)}\big((1-\phi)(\e^{\i tA_{\id}}-\mathbb{1}_{n})\big)\Big)\nonumber \\
	&\quad + C_2 L^{d-1} \|\Op_L^{l}(\e^{\i t\Real A})\| \, \mathbf{N}^{(d+1,d+2)}(\Real A)\nonumber\\
	&\quad + C_3L^{d-1} \mathbf{N}^{(d+1,d+2)}(\Real A) \,\mathbf{N}^{(d+1,d+2)}(\e^{\i t\Real A})
		\nonumber \\ 
	&\quad + C_4 L^{d-1} \mathbf{N}^{(d+1,d+1,d+2)}(A^{(1)}).  
\end{align}
In order to proceed we use Lemma~ \ref{Bounded-Lemma} to bound $ \|\Op_L^{lr}(A_{\id})\|$ uniformly in $L \ge 1$ and 
to estimate $\|\Op_L^{l}(\e^{\i t\Real A})\| \le C \mathbf{N}^{(d,d+1)}(\e^{\i t \Real A})$. It remains to estimate the $t$-dependence of the symbol norms. Each differentiation of the matrix exponentials $\e^{\i tA_{\id}}(x,y,\xi)$, respectively
$\e^{\i t \Real A}(x,\xi)$, with respect to the Cartesian components of $x,y,\xi$, respectively $x,\xi$, brings down a factor of $t$ according to Duhamel's formula
\begin{equation}
	\frac{\dd}{\dd s} \e^{B(s)} = \int_{0}^{1} \e^{\sigma  B(s)} \frac{\dd B}{\dd s} \e^{(1-\sigma)  B(s)} \dd\sigma.
\end{equation} 
Here, $s \mapsto B(s)$ is $C^{\infty}(\R, \C^{n\times n})$ and $s$ stands for any of the Cartesian components of $x,y,\xi$.
Together with the unitarity of $\e^{\i tA_{\id}}(x,y,\xi)$ and $\e^{\i t \Real A}(x,\xi)$, we infer from \eqref{4er-summe} and \eqref{smooth-M-estimate} that 
\begin{equation}
  \|M(t)\|_{1} \le C L^{d-1} t^{(3d+4)},
\end{equation}
where the constant $C$ is independent of $L$ and $t$. 
As the operator $\Op_L^{lr}(A_{\id})$ is self-adjoint, we conclude from \eqref{E-differential-equation} that
\begin{align}
\|E(t)\|_1\leq CL^{d-1}t^{(3d+5)}.
\end{align}
By \eqref{Operator-Fourier} we conclude that
\begin{align}
g\big(\Real \Op_L^l(\Real A)\big)\sim \Op_L^{lr}(A_g).
\end{align}
With the same function $\phi$ as in \eqref{e-amplitude-sum}, we write $A_g(x,y,\xi)=\phi(x) A_g(x,y,\xi) + \big(1-\phi(x)\big) A_g(x,y,\xi)$ and note that $\phi A_g$ is compactly supported in the variable $x$ and $(1-\phi) A_g$ is compactly supported in the variable $y$. Applying relation \eqref{Left-LeftRight} from Lemma \ref{Exchange-Lemma}, yields
\begin{multline}
 \Op_L^{lr}(A_g) = \Op_L^{lr}(\phi A_g)+\Op_L^{lr}\big((1-\phi)A_g\big) \\ \sim \Op_L^l\big(\phi g(\Real A)\big)+ \Op_L^l\big((1-\phi)g(\Real A)\big) = \Op_L^l\big(g(\Real A)\big).
\end{multline}
\end{proof}

We now combine the results for smooth functions in order to obtain the estimate for \eqref{Result-Smooth}.

\begin{lem}\label{Combination-Smooth}
	Let $A_1,A_2\in C^{\infty}_{b}(\R^{d} \times \R^{d}, \C^{n\times n})$ be matrix-valued symbols with $A_2$ being compactly supported 
	in the second variable. Further, let $\Lambda$ and $\Gamma$ be bounded admissible domains. 
	For any function $g\in C^\infty_c(\R)$ with $g(0)=0$ there exist constants $C_1,C_2>0$ with $C_1$ 
	independent of $g$, $\Lambda$ and $\Gamma$, such that 
	\begin{equation}
		\label{Combination-Smooth-Statement}
		\big\|g\big(G_L(A_1,A_2)\big)- D_L\big(g(\Real A_1),g(\Real A_2)\big)\big\|_1 
		\leq C_1  \max_{0 \leq k \leq 2} \|g^{(k)}\|_\infty  L^{d-1}\log L  + C_2  L^{d-1} 
	\end{equation}
	for every $L\geq 2$.
\end{lem}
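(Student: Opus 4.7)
The plan is to follow the four-step structure of the proof of Lemma~\ref{Combination-Analytic}, systematically replacing each analytic tool by its smooth counterpart: Lemma~\ref{Projections-Analytic} by Lemma~\ref{Projections-C2} (which costs a $\max_{0\le k\le 2}\|g^{(k)}\|_\infty$ prefactor and lives at $q<1$ instead of $q=1$), and Lemma~\ref{Operator-Analytic} by Lemma~\ref{Operator-Smooth}. First, I invoke Lemma~\ref{Compact-Support-Smooth} to produce symbols $B_1,B_2 \in C^{\infty}_b(\R^{d}\times\R^{d},\C^{n\times n})$, compactly supported in both variables, such that $g(G_L(A_1,A_2))\sim g(G_L(B_1,B_2))$ and $D_L(g(\Real A_1),g(\Real A_2))\sim D_L(g(\Real B_1),g(\Real B_2))$. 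It therefore suffices to prove \eqref{Combination-Smooth-Statement} with $A_j$ replaced by $B_j$.

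Next, I write $G_L(B_1,B_2)=\mathbf{1}_\Lambda Y_L\mathbf{1}_\Lambda$ with the bounded self-adjoint operator
\begin{equation*}
Y_L:=\Op_L(\mathbf{1}_\Gamma)\,\Real\!\big[\Op_L^l(\Real B_1)\big]\Op_L(\mathbf{1}_\Gamma)+\Op_L(\mathbf{1}_{\Gamma^c})\,\Real\!\big[\Op_L^l(\Real B_2)\big]\Op_L(\mathbf{1}_{\Gamma^c}).
\end{equation*}
Fixing any $q\in\,]0,1[\,$ and applying Lemma~\ref{Projections-C2} with $P=\mathbf{1}_\Lambda$, $X=Y_L$, I obtain an estimate of $\|g(G_L(B_1,B_2))-\mathbf{1}_\Lambda g(Y_L)\mathbf{1}_\Lambda\|_1$ by $C\max_{0\le k\le 2}\|g^{(k)}\|_\infty\,\|\mathbf{1}_\Lambda Y_L\mathbf{1}_{\Lambda^c}\|_q^q$; here I use that $g(0)=0$ together with $\mathbf{1}_\Lambda Y_L\mathbf{1}_\Lambda\cdot(1-\mathbf{1}_\Lambda)=0$ to convert the one-sided bound of Lemma~\ref{Projections-C2} into the two-sided one needed. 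The quasi-norm $\|\mathbf{1}_\Lambda Y_L\mathbf{1}_{\Lambda^c}\|_q^q$ is controlled by estimate~\eqref{Estimate-Log-Q} of Lemma~\ref{Trace class estimates}, applied to each of the two constituents of $Y_L$ (invoking Remark~\ref{Remark-Complement-Admissible}(b) to handle $\Gamma^c$), giving a bound of the form $CL^{d-1}\log L$ with $C$ independent of $g$, $\Lambda$ and $\Gamma$.

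Since $\Op_L(\mathbf{1}_\Gamma)$ and $\Op_L(\mathbf{1}_{\Gamma^c})$ are complementary orthogonal projections, the two summands of $Y_L$ are self-adjoint with zero product, so the functional calculus gives $g(Y_L)=g(Y_{1,L})+g(Y_{2,L})$, where $Y_{j,L}$ denotes the $j$-th summand. A second application of Lemma~\ref{Projections-C2}, this time with $P=\Op_L(\mathbf{1}_\Gamma)$ and $X=\Real[\Op_L^l(\Real B_1)]$ (respectively $P=\Op_L(\mathbf{1}_{\Gamma^c})$ and $X=\Real[\Op_L^l(\Real B_2)]$), allows me to pull these projections out of $g$; the off-diagonal quasi-norms are now controlled by estimate~\eqref{Estimate-Area-Q} and contribute only a term of order $L^{d-1}$. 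Finally, Lemma~\ref{Operator-Smooth} replaces $g(\Real[\Op_L^l(\Real B_j)])$ by $\Op_L^l(g(\Real B_j))$ modulo a further term of order $L^{d-1}$ (with constant depending on $g$), and reassembles $\mathbf{1}_\Lambda g(Y_L)\mathbf{1}_\Lambda$ into $T_L(g(\Real B_1);\Lambda,\Gamma)+T_L(g(\Real B_2);\Lambda,\Gamma^c)=D_L(g(\Real B_1),g(\Real B_2))$. Adding up the contributions yields \eqref{Combination-Smooth-Statement}: the only $L^{d-1}\log L$ term comes from the first application of Lemma~\ref{Projections-C2}, and it carries exactly the prefactor $\max_{0\le k\le 2}\|g^{(k)}\|_\infty$ with a constant $C_1$ that is independent of $g$, $\Lambda$ and $\Gamma$, while all remaining terms are absorbed into $C_2L^{d-1}$.

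The main obstacle is the bookkeeping in the second step: ensuring that the $L^{d-1}\log L$-prefactor $C_1$ is uniform in $g$, $\Lambda$ and $\Gamma$. This rests on two features that are already built into the cited results, namely the $(X,P,g)$-independence of the constant in Lemma~\ref{Projections-C2} and the $(\Lambda,\Gamma)$-independence of the constant in Lemma~\ref{Trace class estimates}; one only needs to be careful not to re-introduce domain dependence when handling the $\Gamma^c$-summand. The remaining technical verifications (choosing the common dense domain $\mathcal{D}=L^2(\R^d)\otimes\C^n$ in Lemma~\ref{Projections-C2}, which is trivial as all operators in sight are bounded, and checking that $g(Y_L)=g(Y_{1,L})+g(Y_{2,L})$ in the presence of $g(0)=0$) are routine.
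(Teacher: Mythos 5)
Your proposal is correct and follows essentially the same route as the paper's proof: reduction to compactly supported symbols via Lemma~\ref{Compact-Support-Smooth}, a first application of Lemma~\ref{Projections-C2} with $P=\mathbf{1}_\Lambda$ and $X=G_L(B_1,B_2;\R^d,\Gamma)$ controlled by \eqref{Estimate-Log-Q} (this is the sole source of the $L^{d-1}\log L$ term with the $g$-independent constant), the functional-calculus splitting over the complementary momentum projections, a second application of Lemma~\ref{Projections-C2} controlled by \eqref{Estimate-Area-Q}, and finally Lemma~\ref{Operator-Smooth} to reassemble $D_L\big(g(\Real B_1),g(\Real B_2)\big)$. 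The bookkeeping of constants and the use of $g(0)=0$ to obtain the two-sided projection estimate match the paper's argument.
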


\begin{proof}
The proof resembles the one of Lemma \ref{Combination-Analytic}. Let $L\geq 2$.
As Lemma $\ref{Compact-Support-Smooth}$ yields
\begin{align}
	g\big(G_L(A_1,A_2)\big) \sim g\big(G_L(B_1,B_2)\big)
\end{align} 
and
\begin{align}
	D_L\big(g(\Real A_1),g(\Real A_2)\big) \sim D_L\big(g(\Real B_1),g(\Real B_2)\big),
\end{align}
it suffices to show \eqref{Combination-Smooth-Statement} with $A_1$ and $A_2$ replaced by the compactly supported symbols $B_1$ and $B_2$.

Lemma \ref{Projections-C2} with $0<q<1$ applied to $g$, $\big(G_L(B_1,B_2;\R^d,\Gamma)\big)$ and $\mathbf{1}_\Lambda$ combined with Lemma \ref{Trace class estimates} yields 
\begin{align}\label{Combination-Smooth-Projections}
	\Big\|g\big(G_L(B_1,B_2)\big) &- \mathbf{1}_\Lambda g\big(G_L(B_1,B_2;\R^d,\Gamma)\big) 
			\mathbf{1}_\Lambda\Big\|_1 \nonumber \\ 
	&= \Big\|\big[g\big(G_L(B_1,B_2)\big)\mathbf{1}_\Lambda 
			-\mathbf{1}_\Lambda g\big(G_L(B_1,B_2;\R^d,\Gamma)\big)\big] \mathbf{1}_\Lambda\Big\|_1 
			\nonumber  \\  
	&\leq  C' \max_{0 \leq k \leq 2} \|g^{(k)}\|_\infty \Big(\| \mathbf{1}_\Lambda \Op_L(\mathbf{1}_\Gamma)
			\Real \Op_L^l(\Real B_1) \Op_L(\mathbf{1}_\Gamma) \mathbf{1}_{\Lambda^c}\|_1  \nonumber \\ 
	&\hspace*{3.5cm} + \| 1_\Lambda \Op_L(\mathbf{1}_{\Gamma^c}) 
			\Real \Op_L^l(\Real B_2) \Op_L(\mathbf{1}_{\Gamma^c}) \mathbf{1}_{\Lambda^c}\|_1 \Big)\nonumber \\
	&\leq C_1 \max_{0 \leq k \leq 2} \|g^{(k)}\|_\infty L^{d-1}\log L  .
\end{align}
Here $C'$ is the constant from Lemma \ref{Projections-C2}, and $C_1$ is already the constant 
	appearing in the claim in \eqref{Combination-Smooth-Statement}. For its independence of $g$, $\Lambda$ and $\Gamma$ we refer to Lemmas \ref{Projections-C2} and \ref{Trace class estimates}.

By an elementary property of the functional calculus we get
\begin{align}
	g\big(G_L(B_1,B_2;\R^d,\Gamma)\big) 
	&= g\big( \Op_L(\mathbf{1}_\Gamma)\Real \Op_L^l(\Real B_1)\Op_L(\mathbf{1}_\Gamma)\big)\nonumber\\
	&\quad +g\big( \Op_L(\mathbf{1}_{\Gamma^c})\Real \Op_L^l(\Real B_2)\Op_L(\mathbf{1}_{\Gamma^c})\big).
\end{align} 
Similarly to \eqref{Combination-Smooth-Projections}, we now apply Lemma \ref{Projections-C2} to $\Real \Op_L^l(\Real B_1)$  and $\Op_L(\mathbf{1}_\Gamma)$, followed by Lemma \ref{Trace class estimates}, which yields
\begin{multline}
	\Big\|g\big(\Op_L(\mathbf{1}_\Gamma)\Real \Op_L^l(\Real B_1)\Op_L(\mathbf{1}_\Gamma)\big)
		- \Op_L(\mathbf{1}_\Gamma)g\big( \Real \Op_L^l(\Real B_1)\big)\Op_L(\mathbf{1}_\Gamma)\Big\|_1 \\ 
	\leq C \max_{0 \leq k \leq 2} \|g^{(k)}\|_\infty \big\|\Op_L(\mathbf{1}_\Gamma)
		\Real \Op_L^l(\Real B_1) \Op_L(\mathbf{1}_{\Gamma^c})\big\|_q^q \leq C L^{d-1}.
\end{multline}
Applying Lemma \ref{Projections-C2} again to $\Real \Op_L^l(\Real B_2)$ and $\Op_L(\mathbf{1}_{\Gamma^c})$, we conclude
\begin{align}
	\mathbf{1}_\Lambda g\big(G_L(B_1,B_2;\R^d,\Gamma)\big) \mathbf{1}_\Lambda 
	\sim \mathbf{1}_\Lambda \Big( & \Op_L(\mathbf{1}_\Gamma)g\big(\Real \Op_L^l(\Real B_1)\big)
		\Op_L(\mathbf{1}_\Gamma)\nonumber\\
	&+ \Op_L(\mathbf{1}_{\Gamma^c})g \big(\Real \Op_L^l(\Real B_2)\big)\Op_L(\mathbf{1}_{\Gamma^c})\Big)
		\mathbf{1}_\Lambda.
\end{align}
Finally Lemma \ref{Operator-Smooth} gives
\begin{align}
\mathbf{1}_\Lambda \Op_L(\mathbf{1}_\Gamma)g\big(\Real \Op_L^l(\Real B_1)\big)\Op_L(\mathbf{1}_\Gamma)\mathbf{1}_\Lambda\sim T_L(g(\Real B_1);\Lambda,\Gamma),
\end{align}
as well as
\begin{align}
	\mathbf{1}_\Lambda \Op_L(\mathbf{1}_{\Gamma^c})g \big(\Real \Op_L^l(\Real B_2)\big) 
		\Op_L(\mathbf{1}_{\Gamma^c})\mathbf{1}_\Lambda
	\sim T_L\big(g(\Real B_2);\Lambda,\Gamma^c\big),
\end{align}
which concludes the proof, since 
\begin{equation}
	D_L\big(g(\Real B_1),g(\Real B_2);\Lambda,\Gamma\big)
	=T_L\big(g(\Real B_1);\Lambda,\Gamma\big) + T_L\big(g(\Real B_2);\Lambda,\Gamma^c\big).
\end{equation}
\end{proof}

\begin{lem}
	\label{Coefficient-C2}
	Let $A_1,A_2\in C^{\infty}_{b}(\R^{d} \times \R^{d}, \C^{n\times n})$ be Hermitian matrix-valued symbols. 
	Let the function $g\in C^1(\R)$ be differentiable with $g(0)=0$. Let $\partial\Lambda$ and $\partial\Gamma$ have finite $(d-1)$-dimensional surface measure induced by Lebesgue measure on $\R^d$. Then there exists a constant $C>0$, which does not depend on $g$, 
	such that
	\begin{align}
		\big|\mathfrak{W}_1\big(\mathfrak{U}(g;A_1,A_2);\partial\Lambda,\partial\Gamma\big)\big|
		\leq C \|g'\|_\infty.
	\end{align}
\end{lem}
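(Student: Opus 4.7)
The plan is to reduce the bound on $\mathfrak{W}_1$ to a pointwise bound on the symbol $\mathfrak{U}(g;A_1,A_2)$ over $\partial\Lambda\times\partial\Gamma$. Since $\partial\Lambda$ and $\partial\Gamma$ have finite $(d-1)$-dimensional surface measure and $|n_{\partial\Lambda}(x)\cdot n_{\partial\Gamma}(\xi)|\le 1$, the definition \eqref{coeff-w1-def} yields
\begin{equation*}
\bigl|\mathfrak{W}_1\bigl(\mathfrak{U}(g;A_1,A_2);\partial\Lambda,\partial\Gamma\bigr)\bigr|
\le C_0 \sup_{(x,\xi)\in\partial\Lambda\times\partial\Gamma}\bigl|\mathfrak{U}(g;A_1,A_2)(x,\xi)\bigr|
\end{equation*}
with a constant $C_0$ depending only on the surface measures of $\partial\Lambda$ and $\partial\Gamma$; the case $d=1$ is immediate as these are then finite sets. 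It therefore suffices to bound the latter supremum by $C_1\|g'\|_\infty$ with $C_1$ independent of $g$.

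To this end I fix $(x,\xi)\in\partial\Lambda\times\partial\Gamma$, abbreviate $B_j := A_j(x,\xi)$ (both Hermitian by assumption), and set $B(t) := tB_1 + (1-t)B_2$ for $t\in[0,1]$. I consider the scalar $C^1$-function
\begin{equation*}
F(t) := \tr_{\C^n}\bigl[g(B(t)) - g(B_1)t - g(B_2)(1-t)\bigr],
\end{equation*}
which satisfies $F(0)=F(1)=0$. The central ingredient is the Hellmann-Feynman-type identity
\begin{equation*}
\frac{d}{dt}\tr_{\C^n}[g(B(t))] = \tr_{\C^n}[g'(B(t))(B_1-B_2)] =: \phi(t),
\end{equation*}
valid for Hermitian $B(t)$ and $g\in C^1(\R)$. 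It is obvious for polynomials by cyclicity of the trace and extends to $C^1$ functions by Stone-Weierstrass approximation on the bounded joint spectrum of $\{B(t):t\in[0,1]\}$.

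Granting this identity, $F(0)=F(1)=0$ together with $F'(t)=\phi(t) - \tr_{\C^n}[g(B_1)-g(B_2)]$ forces $\int_0^1\phi(s)\,ds = \tr_{\C^n}[g(B_1)-g(B_2)]$, and a direct computation gives
\begin{equation*}
F(t) = (1-t)\int_0^t\phi(s)\,ds - t\int_t^1\phi(s)\,ds.
\end{equation*}
Hence $F(t)/[t(1-t)]$ is a difference of two averages of $\phi$ on subintervals of $[0,1]$ and is bounded in absolute value by $2\|\phi\|_{L^\infty([0,1])}$. Since $B(t)$ is Hermitian, the functional calculus delivers $\|g'(B(t))\|\le\|g'\|_\infty$ in operator norm, and $|\tr(XY)|\le\|X\|\,\|Y\|_1$ produces
\begin{equation*}
|\phi(t)| \le \|g'\|_\infty\,\tr_{\C^n}|B_1-B_2| \le \|g'\|_\infty\bigl(\|A_1\|_{\infty,1}+\|A_2\|_{\infty,1}\bigr)
\end{equation*}
with $\|\cdot\|_{\infty,1}$ as in \eqref{Definition-Norm-Infinity-1}. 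Integrating $|F(t)|/[t(1-t)]$ over $t\in[0,1]$ and dividing by $(2\pi)^2$ as in \eqref{frakU-def} closes the pointwise bound and, via the opening reduction, proves the lemma.

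The only subtle point is the Hellmann-Feynman identity in the $C^1$-category when $B(t)$ may have eigenvalue crossings; the polynomial-approximation route sidesteps this issue since the map $(g,X)\mapsto \tr_{\C^n}g(X)$ is continuous with respect to $C^1$-norm of $g$ on the set of Hermitian matrices with spectrum in a fixed compact interval, so linearity at the polynomial level extends to $C^1$. The hypothesis $g(0)=0$ is not used in this bound and is imposed only to align with the surrounding results in this section.
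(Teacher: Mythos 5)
Your proof is correct and follows essentially the same route as the paper: reduce to a pointwise bound on $\mathfrak{U}(g;A_1,A_2)$ over $\partial\Lambda\times\partial\Gamma$ and control it via the identity $\frac{\d}{\d t}\tr_{\C^n}[g(B(t))]=\tr_{\C^n}[g'(B(t))(B_1-B_2)]$, bounded by $\|g'\|_\infty\tr_{\C^n}|B_1-B_2|$, which is exactly the paper's estimate \eqref{Matrix-C1-Estimate}. The only (harmless) difference is bookkeeping: you use the exact representation $F(t)=(1-t)\int_0^t\phi(s)\,\d s - t\int_t^1\phi(s)\,\d s$ to bound $F(t)/[t(1-t)]$ by $2\sup_t|\phi(t)|$, whereas the paper splits $\tfrac{1}{t(1-t)}=\tfrac1t+\tfrac1{1-t}$ and estimates the two resulting integrals separately; your observation that $g(0)=0$ is not needed is also accurate, since $\mathfrak{U}$ is invariant under adding constants to $g$.
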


\begin{proof}
As the surface measure of $\partial\Lambda$ and $\partial\Gamma$ is finite, it suffices to show 
\begin{align}
\sup_{(x,\xi)\in\partial\Lambda\times\partial\Gamma}\big|\big(\mathfrak{U}(g;A_1,A_2)\big)(x,\xi)\big|\leq  C \|g'\|_\infty.
\end{align}
We use the fact that $\frac{1}{t(1-t)}=\frac{1}{t}+\frac{1}{(1-t)}$ to work towards a pointwise estimate for the function 
\begin{align}\label{Coefficient-C2-Two-Integrals}
(2\pi)^2\mathfrak{U}(g;A_1,A_2)\nonumber =&\int_0^1 \frac{\tr_{\C^n}\big[g\big(A_1t+A_2(1-t)\big)-g(A_1)t-g(A_2)(1-t)\big]}{t}\,\d t \nonumber \\&+\int_0^1 \frac{\tr_{\C^n}\big[g\big(A_1t+A_2(1-t)\big)-g(A_1)t-g(A_2)(1-t)\big]}{1-t}\,\d t.
\end{align}
For the first integral we get 
\begin{multline}\label{Coefficient-C2-Integral-1}
	\int_0^1 \frac{\tr_{\C^n}\big[g\big(A_1t+A_2(1-t)\big)-g(A_1)t-g(A_2)(1-t)\big]}{t}\,\d t \\
		= \int_0^1 \frac{\tr_{\C^n}\big[g\big(A_1t+A_2(1-t)\big)-g(A_2)\big]}{t}\,\d t 
			-\tr_{\C^n}\big[g(A_1)-g(A_2)\big].
\end{multline}
We now require an estimate for the difference $\tr_{\C^n}[g(M_1)-g(M_2)]$  for Hermitian matrices $M_1,M_2\in\C^{n\times n}$ in terms of $\|g'\|_\infty$. We estimate
\begin{equation} \label{Matrix-C1-Estimate}
	\big|\tr_{\C^n}[ g(M_1)-g(M_2)]\big| \leq \int_0^1 \Big|\frac{\d}{\d t}
		\tr_{\C^n}\big[g\big(M_2+t(M_1-M_2)\big)\big]\Big| \,\d t 
	\leq \|g'\|_\infty  \tr_{\C^n}|M_1-M_2|. 
\end{equation}
Noting that $A_1t+A_2(1-t)-A_2=A_1t-A_2t$, an application of (\ref{Matrix-C1-Estimate}) yields
\begin{align}
\tr_{\C^n}\big[g\big(A_1t+A_2(1-t)\big)-g(A_2)\big]\leq \|g'\|_\infty (\|A_1\|_{\infty,1}+\|A_2\|_{\infty,1})\,t,
\end{align}
where the norms are defined in (\ref{Definition-Norm-Infinity-1}).
Using this, we estimate the last line of (\ref{Coefficient-C2-Integral-1}) by
\begin{multline}
\|g'\|_\infty \big(\|A_1\|_{\infty,1}+\|A_2\|_{\infty,1}\big)\int_0^1 \frac{t}{t}\,\d t \ + \ \|g'\|_\infty \big(\|A_1\|_{\infty,1}+\|A_2\|_{\infty,1}\big) \nonumber \\ = 2\|g'\|_\infty \big(\|A_1\|_{\infty,1}+\|A_2\|_{\infty,1}\big).
\end{multline}
Similarly, we get for the second integral
\begin{align}
	\int_0^1 &\frac{\tr_{\C^n}\big[g\big(A_1t+A_2(1-t)\big) - g(A_1)t-g(A_2)(1-t)\big]}{1-t}\,\d t 
		\nonumber \\
	&=  \int_0^1 \frac{\tr_{\C^n}\big[g\big(A_1t+A_2(1-t)\big)-g(A_1)\big]}{1-t}\,\d t 
		+ \tr_{\C^n}[g(A_1)-g(A_2)] \nonumber \\
	&\leq   \ 2\|g'\|_\infty \big(\|A_1\|_{\infty,1}+\|A_2\|_{\infty,1}\big).
\end{align}
 Therefore, we have $\sup_{(x,\xi)\in\partial\Lambda\times\partial\Gamma}\big|\big(\mathfrak{U}(g;A_1,A_2)\big)(x,\xi)\big|\leq   C \|g'\|_\infty$.
\end{proof}

With this, we are again ready to close the asymptotics.

\begin{thm}
	\label{Asymptotics-Smooth}
	Let $A_1,A_2\in C^{\infty}_{b}(\R^{d} \times \R^{d}, \C^{n\times n})$ be matrix-valued symbols with $A_2$ being compactly supported in the 
	second variable. Let $\Lambda$ be a bounded piece-wise $C^1$-admissible domain and $\Gamma$ 
	be a bounded piece-wise $C^3$-admissible domain. Let the function $h\in C^\infty(\R)$ be 
	smooth with $h(0)=0$, then 
	\begin{align}
		\tr_{L^2(\R^d)\otimes\C^n}\big[h\big(G_L(& A_1,A_2;\Lambda, \Gamma)\big)\big] \nonumber \\ 
		=& \ L^d \Big[\mathfrak{W}_0\big(\tr_{\C^n}[h(\Real A_1)];\Lambda,\Gamma\big)
			 + \mathfrak{W}_0\big(\tr_{\C^n}[h(\Real A_2)];\Lambda,\Gamma^c\big)\Big]\nonumber\\
		&+ L^{d-1}\log L \ \mathfrak{W}_1\big(\mathfrak{U}(h;\Real A_1,\Real A_2);\partial\Lambda,
				\partial\Gamma\big)\nonumber\\ 
		&+o(L^{d-1}\log L),
	\end{align}
	as $L\rightarrow\infty$. Here, the coefficients $\mathfrak{W}_0$ and $\mathfrak{W}_1$ were defined 
	in \eqref{coeff-w0-def} and \eqref{coeff-w1-def},	respectively, and the symbol $\mathfrak{U}$ 
	in \eqref{frakU-def}.
\end{thm}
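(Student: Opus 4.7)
The plan is to follow the same scheme as in the analytic case (Theorem~\ref{Asymptotics-Analytic}): approximate $h$ by a polynomial, treat the polynomial part via Corollary~\ref{Asymptotics-Gamma-Gamma-Complement-G_L}, and control the remainder using the smooth analogues of Lemmas~\ref{Combination-Analytic} and \ref{Coefficient-Analytic}, namely Lemmas~\ref{Combination-Smooth} and \ref{Coefficient-C2}. Since $h$ is merely smooth, the approximation will take place in $C^{2}$ on a bounded interval, which forces a preliminary reduction to compactly supported test functions.

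First, by Lemma~\ref{Bounded-Lemma} the self-adjoint operator $G_{L}(A_{1},A_{2})$ and, pointwise, each Hermitian matrix $\Real A_{j}(x,\xi)$ for $j\in\{1,2\}$ have spectrum contained in some $[-M,M]$ uniform in $L\ge 1$ and $(x,\xi)\in\R^{d}\times\R^{d}$. Fixing a cutoff $\chi\in C_{c}^{\infty}(\R)$ with $\chi\equiv 1$ on $[-M-1,M+1]$, the functional calculus yields $h(G_{L}(A_{1},A_{2}))=(\chi h)(G_{L}(A_{1},A_{2}))$ and $h(\Real A_{j})=(\chi h)(\Real A_{j})$, so I may replace $h$ by the compactly supported smooth function $\chi h$. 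Given $\epsilon>0$, I would then obtain a polynomial $g_{p}$ with $g_{p}(0)=0$ satisfying $\max_{0\le k\le 2}\|(\chi h - g_{p})^{(k)}\|_{L^{\infty}([-M-2,M+2])}<\epsilon$ via Weierstrass approximation of $(\chi h)''$ followed by two integrations (adjusting the first derivative at the origin to match $h'(0)$), and set $h_{p}:=\chi(\chi h-g_{p})\in C_{c}^{\infty}(\R)$, for which the Leibniz rule yields $\max_{0\le k\le 2}\|h_{p}^{(k)}\|_{\infty}\le C\epsilon$ and $\chi h=g_{p}+h_{p}$ on $[-M,M]$.

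With this decomposition, the asymptotics for $\tr[g_{p}(G_{L}(A_{1},A_{2}))]$ follow from Corollary~\ref{Asymptotics-Gamma-Gamma-Complement-G_L} by linearity in $g_{p}$. For the remainder, Lemma~\ref{Combination-Smooth} applied to $h_{p}$ yields $\|h_{p}(G_{L}(A_{1},A_{2}))-D_{L}(h_{p}(\Real A_{1}),h_{p}(\Real A_{2}))\|_{1}\le C_{1}C\epsilon\,L^{d-1}\log L+C_{2}L^{d-1}$ with $C_{1}$ universal and $C_{2}$ depending on $h_{p}$, while Lemma~\ref{Volume-Smooth} handles the volume of $D_{L}(h_{p}(\Real A_{1}),h_{p}(\Real A_{2}))$ up to $O(L^{d-1})$. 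Summing the contributions from $g_{p}$ and $h_{p}$ and invoking linearity of $\mathfrak{W}_{0}$ and of $\mathfrak{U}(\cdot;\Real A_{1},\Real A_{2})$ in its first argument reconstitutes the desired volume term $L^{d}[\mathfrak{W}_{0}(\tr_{\C^{n}}[h(\Real A_{1})];\Lambda,\Gamma)+\mathfrak{W}_{0}(\tr_{\C^{n}}[h(\Real A_{2})];\Lambda,\Gamma^{c})]$. The logarithmic coefficient produced by this procedure is $\mathfrak{W}_{1}(\mathfrak{U}(g_{p};\Real A_{1},\Real A_{2});\partial\Lambda,\partial\Gamma)$, and by linearity of $\mathfrak{U}$ in its first argument it differs from $\mathfrak{W}_{1}(\mathfrak{U}(h;\Real A_{1},\Real A_{2});\partial\Lambda,\partial\Gamma)$ by $\mathfrak{W}_{1}(\mathfrak{U}(h_{p};\Real A_{1},\Real A_{2});\partial\Lambda,\partial\Gamma)$, which Lemma~\ref{Coefficient-C2} bounds by $C'\|h_{p}'\|_{\infty}\le C''\epsilon$. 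Sending $\epsilon\downarrow 0$ closes the asymptotics.

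The main technical point I expect to navigate is that the polynomial $g_{p}$ is not compactly supported, so Lemma~\ref{Combination-Smooth} cannot be applied to $\chi h-g_{p}$ directly. The second multiplication by $\chi$ in the definition of $h_{p}$ is what circumvents this: $h_{p}$ coincides with $\chi h - g_{p}$ on $[-M,M]$ and hence on every relevant spectrum, while being in $C_{c}^{\infty}(\R)$ with $C^{2}$-norm of order $\epsilon$. Once this splitting is in place, the polynomial-dependent $C_{2}L^{d-1}$ in the bound from Lemma~\ref{Combination-Smooth} is harmless, since $p$ is fixed once $\epsilon$ is, and the remainder of the argument parallels the proof of Theorem~\ref{Asymptotics-Analytic} verbatim.
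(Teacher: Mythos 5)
Your proposal is correct and follows essentially the same route as the paper's proof: a $C^2$ polynomial approximation, Corollary~\ref{Asymptotics-Gamma-Gamma-Complement-G_L} for the polynomial part, Lemmas~\ref{Combination-Smooth}, \ref{Coefficient-C2} and \ref{Volume-Smooth} for the remainder, the $\mathfrak{W}_1$-coefficient and the volume terms, and a $\limsup/\liminf$ argument as $\epsilon\downarrow 0$. Your extra multiplication by the cutoff $\chi$, which makes the remainder $h_p$ genuinely lie in $C_c^\infty(\R)$ with $h_p(0)=0$ while agreeing with $h-g_p$ on all relevant spectra, is just a slightly more explicit treatment of the compact-support hypothesis of Lemma~\ref{Combination-Smooth}, which the paper handles by assuming from the outset that $h$ has compact support and approximating on $\supp(h)$.
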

\begin{proof}
As the operator $G_L(A_1,A_2;\Lambda, \Gamma)$ is bounded, we can assume that the support $\supp(h)$ of $h$ is compact. Let $\epsilon>0$ be arbitrary.
By the theorem of Stone--Weierstrass we can find a polynomial $g_\epsilon$ defined on $\supp(h)$ such that $h_\epsilon:=h-g_\epsilon$ satisfies the bound 
\begin{align}
\max_{0 \leq k \leq 2} \|h_\epsilon^{(k)}\|_\infty<\frac{\epsilon}{2},
\end{align}
where the supremum norm is the one for functions on $\supp(h)$.
Applying Lemma $\ref{Combination-Smooth}$ yields
\begin{equation}
	\big\| h_\epsilon\big(G_L(A_1,A_2)\big) - D_L\big(h_\epsilon(\Real A_1),h_\epsilon( \Real A_2)\big)
		\big\|_1
	\leq C_1\frac{\epsilon}{2} L^{d-1}\log L  + C_2  L^{d-1}.
\end{equation}
As
\begin{align}
	\tr_{L^2(\R^d)\otimes\C^n}\big[h\big(G_L(A_1,A_2)\big) &- D_L\big(h(\Real A_1),h(\Real A_2)\big)\big]
		\nonumber \\ 
	&\leq  \tr_{L^2(\R^d)\otimes\C^n}\big[g_\epsilon\big(G_L(A_1,A_2)\big) - 
		D_L\big(g_\epsilon(\Real A_1),g_\epsilon(\Real A_2)\big)\big]\nonumber \\
	&\quad+ \big\|h_\epsilon\big(G_L(A_1,A_2)\big) 
		- D_L\big(h_\epsilon(\Real A_1),h_\epsilon(\Real A_2)\big)\big\|_1,
\end{align}
and the asymptotic formula is already established for $g_\epsilon$, we get
\begin{align}
	\underset{L\rightarrow\infty}{\limsup}\frac{\tr_{L^2(\R^d)\otimes\C^n}\big[h\big(G_L(A_1,A_2)\big)
		- D_L\big(h(\Real A_1),h(\Real A_2)\big)\big]}{L^{d-1}\log L}
	\leq \mathfrak{W}_1(g_\epsilon) + C_1\frac{\epsilon}{2},
\end{align}
where $\mathfrak{W}_1(g_\epsilon):=\mathfrak{W}_1\big(\mathfrak{U}(g_\epsilon;\Real A_1,\Real A_2);\partial\Lambda,\partial\Gamma\big).$
Lemma \ref{Coefficient-C2} yields
\begin{align}
|\mathfrak{W}_1(h_\epsilon)|\leq C \|h_\epsilon'\|_\infty<C\frac{\epsilon}{2}.
\end{align}
Hence
\begin{align}
\mathfrak{W}_1(g_\epsilon)\leq \mathfrak{W}_1(h)+|\mathfrak{W}_1(h_\epsilon)|<\mathfrak{W}_1(h)+C\frac{\epsilon}{2}.
\end{align}
Therefore, 
\begin{align}
	\underset{L\rightarrow\infty}{\limsup}\frac{\tr_{L^2(\R^d)\otimes\C^n}\big[h\big(G_L(A_1,A_2)\big)
		- D_L\big(h(\Real A_1),h(\Real A_2)\big)\big]}{L^{d-1}\log L}
	\leq  \mathfrak{W}_1(h) + C\epsilon.
\end{align}
The corresponding lower bound for the liminf is found in the same way. As $\epsilon$ is arbitrary, we get
\begin{align}
	\underset{L\rightarrow\infty}{\lim} \frac{\tr_{L^2(\R^d)\otimes\C^n}\big[h\big(G_L(A_1,A_2)\big)
		- D_L\big(h(\Real A_1),h(\Real A_2)\big)\big]}{L^{d-1}\log L}
	=  \mathfrak{W}_1(h).
\end{align}
The trace of $D_L\big(h(\Real A_1),h(\Real A_2)\big)$ gives rise to the volume terms by Lemma \ref{Volume-Smooth}, and we obtain the desired asymptotics
\begin{align}
	\tr_{L^2(\R^d)\otimes\C^n}\big[h\big(G_L(&A_1,A_2;\Lambda, \Gamma)\big)\big] \nonumber \\ 
	=&\ L^d \big[\mathfrak{W}_0\big(\tr_{\C^n}[h(\Real A_1)];\Lambda,\Gamma\big)+\mathfrak{W}_0\big(\tr_{\C^n}[h(\Real A_2)];\Lambda,\Gamma^c\big)\big]\nonumber\\&+L^{d-1}\log L \ \mathfrak{W}_1\big(\mathfrak{U}(h;\Real A_1,\Real A_2);\partial\Lambda,\partial\Gamma\big)\nonumber\\ &+o(L^{d-1}\log L),
\end{align}
as $L\rightarrow\infty$.
\end{proof}

%%%%%%%%%%%%%%%%%%%%%%%%%%%%%%%%%%%%%%%%%%%%%%%%%%%%%%%%%%%%%%%%%%%%%%%%%

\subsection{More general functions}
\label{subsec:Hoelder}

We now consider a more general class of test functions which includes all R\'enyi entropy functions. Studying these test functions is greatly motivated by the applications to entanglement entropies of non-interacting Fermi gases.

\begin{ass}
	\label{H-Functions}
	Let $\gamma\in\,]0,1]$ and let $\mathcal X: =\{x_1,x_2,\ldots,x_N\}\subset\R,N\in\N,$ be a finite collection of different points 
	on the real line. Let $U_j\subset\R$, $j\in\{1,\ldots , N\}$, be pairwise disjoint 
	neighbourhoods of the points $x_j\in \mathcal X$. Given a function 
	$h\in C(\R) \cap C^2(\R \setminus \mathcal X)$, we assume the existence of a constant $C>0$ such that for every  $k\in\{0,1,2\}$ the estimate 
	\begin{align}\label{H-Functions-Bound}
		\Big|\frac{\d^k}{\d x^k}\big[h-h(x_j)\big](x)\Big|\leq C |x-x_j|^{\gamma-k} 
	\end{align}
	holds for every $x\in U_j\setminus\{x_j\}$ and every $j\in\{1,\ldots , N\}$. In particular, this 
	implies that $h$ is H\"older continuous at the points of $\mathcal X$.
\end{ass}

As Lemma \ref{Operator-Smooth} is not available for $C_c^2$-functions, we will restrict ourselves to the case that the symbols $A_1$ and $A_2$ only depend on the variable $\xi$ from now on. 
In Lemma \ref{Projections-H-Functions} and the proof of Theorem \ref{Asymptotics-Hoelder}, which is the main result of this section, we will decompose $h=h_1+h_2$, where $h_1\in C^2(\R)$ and $h_2$ is a finite sum of functions of the following type.

\begin{ass}
	\label{Definition-Hoelder-Sobolev}
	Let $\gamma\in\,]0,1]$, $x_0\in\R$, $R>0$ and $I:= \,]x_0-R,x_0+R[\,$. 
	Given a function $g\in C_{c}(I) \cap C^2(I\setminus \{x_0\})$, we assume finiteness of the norms 
	\begin{align}
		\label{Hoelder-Sobolev-Bound}
		\bnorm{g}_{l}:= \max_{0\leq k\leq l}\sup_{x\in I\setminus \{x_0\}} 
		\big[|g^{(k)}(x)||x-x_0|^{-\gamma+k}\big] < \infty,
	\end{align}
	for $l \in\{0,1,2\}$. In particular, this implies that $g$ is H\"older continuous 
	at $x_{0}$ with $g(x_0)=0$.
\end{ass}

Before we turn to the main theorem of this section, we first collect two additional ingredients. The first one is a version of Lemma \ref{Projections-C2} for functions satisfying Assumption \ref{Definition-Hoelder-Sobolev}. We quote a special case of \cite[Thm.\ 2.10]{sobolevc2} adapted to our situation.

\begin{thm}\label{Projections-Hoelder}
Let $g$ satisfy Assumption \ref{Definition-Hoelder-Sobolev}. Let $q\in \,]0,\gamma[\,$. Further, let $X$ be a self-adjoint operator with dense domain $\mathcal{D}$ in a separable Hilbert space $\mathcal{H}$ and $P$ be an orthogonal projection on $\mathcal{H}$ such that $P\mathcal{D} \subseteq \mathcal{D}$ and $PX(\mathbb{1}_{\mathcal{H}}-P)\in\mathcal{T}_q$. Then
\begin{align}
\|g(PXP)P-Pg(X)\|_1\leq C \bnorm{g}_2 R^{\gamma-q} \|PX(\mathbb{1}_{\mathcal{H}}-P)\|_q^q,
\end{align}
with a positive constant $C$ independent of $X,P,g$ and $R$.
\end{thm}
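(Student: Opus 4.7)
The plan is to decompose $g$ dyadically around its singularity $x_0$ and to combine the smooth $C^2$-estimate from Lemma~\ref{Projections-C2} with a complementary bound at small scales. After translating, I may assume $x_0 = 0$, so that $g$ is supported in $[-R,R]$ with $|g^{(k)}(x)| \leq \bnorm{g}_2\,|x|^{\gamma-k}$ for $k \in \{0,1,2\}$ and $x\neq 0$. I choose a smooth dyadic partition of unity $\{\eta_j\}_{j\geq 0}$ on $[-R,R]$ subordinate to annuli of the form $\{|x| \sim 2^{-j}R\}$, with $\eta_0$ vanishing near the origin and $|\eta_j^{(k)}|\leq C(2^{-j}R)^{-k}$. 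Writing $g_j := g\eta_j$, the Leibniz rule yields
\begin{equation*}
\|g_j^{(k)}\|_\infty \leq C\bnorm{g}_2\,(2^{-j}R)^{\gamma-k},\qquad k=0,1,2.
\end{equation*}

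Lemma~\ref{Projections-C2} applied to each $g_j$ then delivers
\begin{equation*}
\|g_j(PXP)P - Pg_j(X)\|_1 \leq C\bnorm{g}_2\,(2^{-j}R)^{\gamma-2}\,\|PX(\mathbb{1}_{\mathcal{H}} - P)\|_q^q.
\end{equation*}
Since the exponent $\gamma - 2$ is negative, this bound is only useful at coarse scales (small $j$), and direct summation diverges. The heart of the matter, and the main obstacle, is to derive a complementary small-scale bound that exploits the uniform smallness $\|g_j\|_\infty \leq C\bnorm{g}_2(2^{-j}R)^\gamma$ together with the localisation of $g_j$ in an interval of length $\sim 2^{-j}R$. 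The target estimate reads
\begin{equation*}
\|g_j(PXP)P - Pg_j(X)\|_1 \leq C\bnorm{g}_2\,(2^{-j}R)^{\gamma-q}\,\|PX(\mathbb{1}_{\mathcal{H}} - P)\|_q^q,
\end{equation*}
and its proof requires double operator integral or Peller-type trace-norm techniques, which form the technical core of \cite{sobolevc2}; the assumption $q<\gamma$ enters precisely at this interpolation step.

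Once such a scale-adapted bound is available for each $g_j$, summing yields a geometric series $\sum_{j\geq 0}(2^{-j}R)^{\gamma-q}$, which converges thanks to $q<\gamma$ and produces the factor $R^{\gamma-q}$ advertised in the statement. The $j=0$ contribution, where $g_0$ is genuinely of class $C^2$, is absorbed into the same bound by a direct application of Lemma~\ref{Projections-C2}. Assembling all scales therefore gives
\begin{equation*}
\|g(PXP)P - Pg(X)\|_1 \leq C\bnorm{g}_2\,R^{\gamma-q}\,\|PX(\mathbb{1}_{\mathcal{H}} - P)\|_q^q
\end{equation*}
with a constant $C$ depending only on $\gamma$ and $q$. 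The conceptual difficulty is not the dyadic bookkeeping but the single-scale inequality with exponent $\gamma-q$, i.e.\ converting the trivial sup-norm control of $g_j$ into a trace-class control with the same power of $\|PX(\mathbb{1}_{\mathcal{H}}-P)\|_q$ as in the smooth case.
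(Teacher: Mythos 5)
Your proposal has a genuine gap, and it sits exactly at the point you yourself flag: the single-scale inequality
\begin{equation*}
	\|g_j(PXP)P - Pg_j(X)\|_1 \leq C\,\bnorm{g}_2\,(2^{-j}R)^{\gamma-q}\,\|PX(\mathbb{1}_{\mathcal{H}} - P)\|_q^q
\end{equation*}
is not proved but only asserted to follow from ``double operator integral or Peller-type techniques'' forming the core of \cite{sobolevc2}. Since the dyadic decomposition and the geometric summation are routine, this estimate \emph{is} the content of the theorem; deferring it to the cited reference means the proposal does not constitute a proof. (For context: the paper itself does not prove this statement either -- it is quoted verbatim as a special case of \cite[Thm.~2.10]{sobolevc2} -- so there is no in-paper argument for you to have reproduced; but as a blind proof attempt yours is incomplete at the decisive step.)

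It is worth noting, however, that your missing estimate can be obtained elementarily from Lemma~\ref{Projections-C2} by a scaling argument, so your overall strategy can be rescued without any Peller-type machinery. Set $\lambda_j := 2^{-j}R$ and $\tilde g_j(y) := g_j(\lambda_j y)$ (after translating $x_0$ to $0$); then $g_j(X) = \tilde g_j(X/\lambda_j)$ and $g_j(PXP) = \tilde g_j\big(P(X/\lambda_j)P\big)$ by the spectral theorem, while your Leibniz bounds give $\max_{0\le k\le 2}\|\tilde g_j^{(k)}\|_\infty = \max_k \lambda_j^{k}\|g_j^{(k)}\|_\infty \le C\bnorm{g}_2\,\lambda_j^{\gamma}$ and $\|P(X/\lambda_j)(\mathbb{1}_{\mathcal{H}}-P)\|_q^q = \lambda_j^{-q}\|PX(\mathbb{1}_{\mathcal{H}}-P)\|_q^q$. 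Because the constant in Lemma~\ref{Projections-C2} is independent of the function and of the operator, applying it to $\tilde g_j$ and $X/\lambda_j$ yields precisely the scale-adapted bound with exponent $\gamma-q$ (note $q<\gamma\le 1$ ensures $q\in\,]0,1[\,$ as required there). Your summation over $j$ then gives the factor $R^{\gamma-q}$; one should still verify that $\sum_j\big(g_j(PXP)P-Pg_j(X)\big)$ converges to $g(PXP)P-Pg(X)$, which follows from the uniform convergence of $\sum_{j\le J}g_j\to g$ together with the absolute summability in trace norm. With this insertion your argument becomes a self-contained derivation of Theorem~\ref{Projections-Hoelder} from Lemma~\ref{Projections-C2}; as written, though, the key inequality is assumed rather than established.
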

We now combine this estimate with Lemma \ref{Projections-C2} to get a similar estimate for functions satisfying Assumption \ref{H-Functions}.
\begin{lem}\label{Projections-H-Functions}
Let $h$ satisfy Assumption \ref{H-Functions} and be compactly supported. Then there exists $R_{0}>0$ such that for all $R\in\,]0,R_0]$ the function $h$ can be decomposed as $h=\sum_{j=1}^Nh_{R,j}+g_{R}$, with $h_{R,j}$ satisfying Assumption \ref{Definition-Hoelder-Sobolev} with $x_{0}=x_{j}$ and the same H\"older exponent $\gamma\in\,]0,1]$ as $h$ for every $j\in\,\{1,\ldots,N\}$  and $g_R\in C_c^2(\R)$. Further, let $q\in \,]0,\gamma[\,$, $X$ be a self-adjoint operator with dense domain $\mathcal{D}$ in a separable Hilbert space 
$\mathcal{H}$ and $P$ be an orthogonal projection on $\mathcal{H}$ such that $P\mathcal{D} \subseteq \mathcal{D}$ and $PX(\mathbb{1}_{\mathcal{H}}-P)\in\mathcal{T}_q$. Then there exist constants $C_j>0$, $j\in\{0,\ldots ,N\}$, which are independent of $X$, $P$, $h$ and $R$ such that
\begin{align}
\|h(PXP)P-Ph(X)\|_1\leq \Big(\sum_{j=1}^{N} C_j \bnorm{h_{R,j}}_2 R^{\gamma-q} + C_0 \max_{0 \leq k \leq 2} \|g_{R}^{(k)}\|_\infty \Big) \|PX(\mathbb{1}_{\mathcal{H}}-P)\|_q^q.
\end{align}
\end{lem}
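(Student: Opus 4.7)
The plan is to decompose $h$ as a sum of a localised piece around each singular point $x_j$ (satisfying Assumption~\ref{Definition-Hoelder-Sobolev}) plus a globally $C^2$ remainder, apply Theorem~\ref{Projections-Hoelder} to each localised piece and Lemma~\ref{Projections-C2} to the remainder, and combine by the triangle inequality using linearity of the functional calculus.

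To construct the decomposition, I choose $R_0 > 0$ small enough so that the closed intervals $[x_j - R_0, x_j + R_0]$ are pairwise disjoint and each is contained in the neighbourhood $U_j$ of Assumption~\ref{H-Functions}. For $R \in \,]0,R_0]$, I pick standard smooth bump functions $\chi_j \in C_c^\infty(\R)$ with $\chi_j \equiv 1$ on $[x_j - R/2, x_j + R/2]$, $\supp \chi_j \subset \,]x_j - R, x_j + R[\,$ and $|\chi_j^{(k)}| \leq C R^{-k}$ for $k \in \{0,1,2\}$. I then set $h_{R,j}(x) := \bigl(h(x) - h(x_j)\bigr)\chi_j(x)$ and $g_R := h - \sum_{j=1}^N h_{R,j}$.

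The key point is to verify that the pieces have the required regularity. Near $x_j$ (where $\chi_j \equiv 1$), $h_{R,j}$ coincides with $h - h(x_j)$, so the derivative bounds \eqref{Hoelder-Sobolev-Bound} with $x_0 = x_j$ follow directly from \eqref{H-Functions-Bound}. In the transition region $R/2 \leq |x - x_j| \leq R$, I apply the Leibniz rule: the factors of $R^{-k}$ from $\chi_j^{(k)}$ are absorbed by the bounds $|x - x_j|^{\gamma - k}$ from Assumption~\ref{H-Functions} because $R^{-1} \leq 2 |x - x_j|^{-1}$ there, giving the pointwise bounds required by Assumption~\ref{Definition-Hoelder-Sobolev} with a constant independent of $R$ (so $\bnorm{h_{R,j}}_2$ is uniformly bounded in $R$). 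For $g_R$: near $x_j$ it equals the constant $h(x_j)$; away from all $x_j$ it is a $C^2$ combination of $C^2$ functions (using $h \in C^2(\R \setminus \mathcal X)$), so $g_R \in C_c^2(\R)$ is genuinely twice differentiable on all of $\R$.

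To conclude, I use that functional calculus is linear for self-adjoint operators, so $h(X) = \sum_j h_{R,j}(X) + g_R(X)$ and likewise for $PXP$. Thus
\begin{equation*}
h(PXP)P - Ph(X) = \sum_{j=1}^N \bigl(h_{R,j}(PXP)P - Ph_{R,j}(X)\bigr) + \bigl(g_R(PXP)P - Pg_R(X)\bigr).
\end{equation*}
Applying Theorem~\ref{Projections-Hoelder} to each $h_{R,j}$ with $x_0 = x_j$ and $q \in \,]0, \gamma[\,$, and Lemma~\ref{Projections-C2} to $g_R$, then summing via the triangle inequality, yields the claimed bound. The main (minor) obstacle is the verification in the transition region that the Leibniz terms $(h-h(x_j))\chi_j^{(k)}$ and $h^{(l)}\chi_j^{(k-l)}$ still obey the $|x - x_j|^{\gamma - k}$ pointwise estimate; as noted above this reduces to the elementary inequality $R^{-1} \leq 2|x - x_j|^{-1}$ on $\{R/2 \leq |x - x_j|\}$, so there is no genuine difficulty.
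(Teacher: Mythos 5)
Your proof is correct and follows essentially the same approach as the paper: localise $h$ around each singular point $x_j$ with a bump function scaled to width $R$, verify via the Leibniz rule that each localised piece satisfies Assumption~\ref{Definition-Hoelder-Sobolev} with $\bnorm{\cdot}_2$-norm bounded uniformly in $R$, and then apply Theorem~\ref{Projections-Hoelder} to each piece and Lemma~\ref{Projections-C2} to the $C^2_c$ remainder via the triangle inequality. The only cosmetic difference is that the paper first subtracts a single auxiliary $C^2_c$ function $g$ with $g(x_j)=h(x_j)$ before localising, whereas you subtract the constant $h(x_j)$ inside each piece directly; the difference is a compactly supported $C^2$ function that gets absorbed into $g_R$ in either case, so the two constructions are equivalent.
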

\begin{proof}
Let $g\in C^2_c(\R)$ be a function such that $g(x_j)=h(x_j)$ for all $x_j\in \mathcal X$, $j\in\{1,\ldots , N\}$. Then the function $h-g$ also satisfies Assumption \ref{H-Functions} and $(h-g)(x_j)=0$ for all $x_j\in \mathcal X$, $j\in\{1,\ldots , N\}$. \\ Let $R_0>0$ be small enough such that $\,]x_j-R_{0},x_j+R_{0}[\,\subset U_j$ for every $j\in\{1,\ldots , N\}$. For any $R\in\,]0,R_0]$ write $h-g=\sum_{j=1}^N h_{R,j}+f_R$ with $h_{R,j}(x):=[h(x)-g(x)]\zeta((x-x_j)R^{-1})$, where $\zeta\in C_c^\infty(\R)$ with $\zeta(x)=1$ for all $x\in \,]-\frac{1}{2},\frac{1}{2}[\,$ and $\zeta(x)=0$ for all $x\notin \,]-1,1[\,$ as well as $||\zeta||_\infty=1$. Then $f_R\in C_c^2(\R)$ and $\,]x_j-R,x_j+R[\,\subset U_j$ for every $j\in\{1,\ldots , N\}$. We now verify that each $h_{R,j}$ satisfies Assumption \ref{Definition-Hoelder-Sobolev} with $x_0=x_j$ and $\bnorm{h_{R,j}}_2$ bounded uniformly in $R\in\,]0,R_0]$. Indeed, let $j\in\{1,\ldots , N\}$ and $R\in\,]0,R_0]$ be arbitrary. Then for every $x\in \,]x_j-R,x_j+R[\,\setminus\{x_j\}$ the bound \eqref{H-Functions-Bound} applied to $h-g$ yields
\begin{align} 
	|h_{R,j} & (x)||x-x_j|^{-\gamma} \notag \\
	&=\big|\big[h(x)-g(x)\big]\zeta\big((x-x_j)R^{-1}\big)\big||x-x_j|^{-\gamma}
		\leq \big|h(x)-g(x)\big||x-x_j|^{-\gamma}\leq C 
\end{align}
and
\begin{align}
	|h_{R,j}'&(x)||x-x_j|^{-\gamma+1} \notag \\ 
	&=\Big|\big[h'(x)-g'(x)\big]\zeta\big((x-x_j)R^{-1}\big)+\big[h(x)-g(x)\big]R^{-1}\zeta'\big((x-x_j)R^{-1}\big)\Big|\,|x-x_j|^{-\gamma+1} \notag \\ 
	&\leq \big|h'(x)-g'(x)\big||x-x_j|^{-\gamma+1}+ ||\zeta'||_\infty\big|h(x)-g(x)\big|\frac{|x-x_j|}{R}|x-x_j|^{-\gamma}\leq C.
\end{align}
The second derivative works in the same way.
We now have the decomposition $h=\sum_{j=1}^N h_{R,j}+f_R+g=\sum_{j=1}^N h_{R,j}+g_R$, with $g+f_R=:g_R\in C_c^2(\R)$.

It remains to apply Lemma \ref{Projections-C2} to $g_R$ and Theorem \ref{Projections-Hoelder} to each $h_{R,j}$ to obtain the desired estimate:
\begin{align}
\|h(PXP)P-Ph(X)\|_1 &\leq \sum_{j=1}^{N} \|h_{R,j}(PXP)P-Ph_{R,j}(X)\|_1+\|g_{R}(PXP)P-Pg_{R}(X)\|_1 \nonumber\\ &\leq \Big(\sum_{j=1}^{N} C_j \bnorm{h_{R,j}}_2 R^{\gamma-q} + C_0 \max_{0 \leq k \leq 2} \|g_{R}^{(k)}\|_\infty \Big) \|PX(\mathbb{1}_{\mathcal{H}}-P)\|_q^q.
\end{align}
\end{proof}

The second ingredient is an estimate for the coefficient $\mathfrak{W}_1$. For this, we need an estimate similar to (\ref{Matrix-C1-Estimate}). While this is easy to come by in the scalar-valued case, it is more difficult in the case with two matrix-valued symbols. In order to do so, we quote a simpler version of 
\cite[Thm.\ 2.4]{sobolevc2}, which we will only need for a finite-dimensional Hilbert space.

\begin{thm}\label{No-Projections-Hoelder}
Let $g$ satisfy Assumption \ref{Definition-Hoelder-Sobolev}. Let $q\in \,]0,\gamma[\,$. Further, let $X_1,X_2$ be self-adjoint operators with dense domains $\mathcal{D}_{1}, \mathcal{D}_{2}$ in a separable Hilbert space $\mathcal{H}$ such that $\mathcal{D}_{1} \cap \mathcal{D}_{2}$ is dense in $\mathcal{H} $ and $X_1-X_2\in\mathcal{T}_q$. Then
\begin{align}
\|g(X_1)-g(X_2)\|_1\leq C \bnorm{g}_2 R^{\gamma-q} \|X_1-X_2\|_q^q,
\end{align}
with a positive constant $C$ independent of $X_1,X_2,g$ and $R$.
\end{thm}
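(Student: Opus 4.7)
The plan is to establish the estimate by a dyadic decomposition of $g$ around its singularity at $x_0$, reducing the problem to a well-scaled $C^2$-type estimate on each dyadic piece. First I would introduce a smooth dyadic partition of unity $\{\chi_k\}_{k\in\N_{0}}$ subordinate to the annuli $\{x\in I : 2^{-k-1}R \le |x-x_0| \le 2^{-k}R\}$, with $\|\chi_k^{(l)}\|_\infty \le C(2^{-k}R)^{-l}$, and decompose $g = \sum_{k\ge 0} g_k$ with $g_k := g\chi_k$. Assumption~\ref{Definition-Hoelder-Sobolev} together with the Leibniz rule then yields the uniform bounds $\|g_k^{(l)}\|_\infty \le C\bnorm{g}_2 (2^{-k}R)^{\gamma - l}$ for $l \in \{0,1,2\}$, and each $g_k$ is compactly supported in an interval of length $\sim 2^{-k}R$.

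The technical heart is then a scaled $C^2$-estimate of the form: for $h \in C_c^2(\R)$ with support in an interval of length $\ell$,
\begin{equation*}
\|h(X_1) - h(X_2)\|_1 \le C\, \|h''\|_\infty\, \ell^{2-q}\, \|X_1-X_2\|_q^q.
\end{equation*}
I would prove this via the Helffer--Sj\"ostrand formula applied to a quasi-analytic extension $\widetilde h$ of $h$ satisfying $|\bar\partial \widetilde h(x+\i y)| \le C\|h''\|_\infty |y|\, \mathbf{1}_{\{|y| \le \ell\}}$. Writing
\begin{equation*}
h(X_1) - h(X_2) = \frac{1}{\pi}\int_\C \bar\partial \widetilde h(z)\,(X_1-z)^{-1}(X_2-X_1)(X_2-z)^{-1}\, \dd z \wedge \dd \bar z
\end{equation*}
and estimating the integrand in trace norm via a Schatten--H\"older inequality in its quasi-Banach form for $q<1$, combined with the resolvent bound $\|(X_j - z)^{-1}\| \le |\Im z|^{-1}$, a careful balance produces the factor $\ell^{2-q}\,\|X_1-X_2\|_q^q$, with the $\ell$-power coming from the compactness of the $z$-support and the $z$-integrability following from the vanishing of $\bar\partial\widetilde h$ on the real axis.

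Summing the dyadic estimates finally yields
\begin{equation*}
\|g(X_1) - g(X_2)\|_1 \le C\bnorm{g}_2 \sum_{k \ge 0} (2^{-k}R)^{\gamma - 2}\,(2^{-k}R)^{2-q}\, \|X_1-X_2\|_q^q = C'\,\bnorm{g}_2\, R^{\gamma - q}\, \|X_1-X_2\|_q^q,
\end{equation*}
where convergence of the geometric series $\sum_k 2^{-k(\gamma-q)}$ exploits precisely the hypothesis $q < \gamma$, and the prefactor $C'$ is independent of $X_1,X_2,g$ and $R$, as required.

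The hard part will be the $C^2$-estimate in the second step: since $\mathcal T_q$ is only a quasi-Banach space for $q<1$, the Schatten--H\"older estimate has to be applied in a form adapted to exponents whose reciprocals sum to $1$ while one of them lies in $]0,1[$, and one must balance the bound $|\bar\partial\widetilde h| \sim \|h''\|_\infty |\Im z|$ against the two resolvent singularities $|\Im z|^{-1}$ so as to extract both the correct $\ell$-scaling and an integrable singularity at the real axis. The restriction to finite-dimensional $\mathcal H$ required for the application of Theorem~\ref{No-Projections-Hoelder} in the proof of Theorem~\ref{Asymptotics-Hoelder} guarantees the finiteness of all Schatten norms involved but does not trivialise the scaling in $\ell$ and $R$, which is the actual content of the estimate.
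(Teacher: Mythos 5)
The paper itself does not prove Theorem~\ref{No-Projections-Hoelder}; it is quoted verbatim as a special case of \cite[Thm.~2.4]{sobolevc2}. So what you have written is an attempt at an independent proof, not a reproduction of the paper's argument. Your dyadic scaffolding is sound as far as it goes: the Leibniz rule does give $\|g_k^{(l)}\|_\infty \lesssim \bnorm{g}_2 (2^{-k}R)^{\gamma-l}$, the final geometric series $\sum_k 2^{-k(\gamma-q)}$ does converge precisely because $q<\gamma$, and the resulting prefactor $R^{\gamma-q}$ comes out correctly.

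The gap is in the claimed scaled $C^2$-estimate $\|h(X_1)-h(X_2)\|_1 \leq C\|h''\|_\infty\,\ell^{2-q}\,\|X_1-X_2\|_q^q$, which you propose to prove via Helffer--Sj\"ostrand together with a ``Schatten--H\"older inequality in its quasi-Banach form.'' The quantity $\|X_1-X_2\|_q^q = \tr|X_1-X_2|^q$ (with the $q$-th power) simply does not emerge from a H\"older estimate on the resolvent triple product. The Schatten--H\"older inequality bounds $\|ABC\|_r \leq \|A\|_p\|B\|_s\|C\|_t$ when $1/r=1/p+1/s+1/t$; to land in $\mathcal T_1$ with the middle factor measured in $\mathcal T_q$, $q<1$, one would need $1/p+1/t = 1-1/q < 0$, which is impossible. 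If instead you estimate the whole integrand in $\mathcal T_q$ (with both resolvents in operator norm), you obtain a bound for $\|h(X_1)-h(X_2)\|_q$ in terms of $\|X_1-X_2\|_q$ to the \emph{first} power --- which is both the wrong target norm and the wrong power, and for $q<1$ the passage from $\|\cdot\|_q$ back to $\|\cdot\|_1$ costs nothing useful. Even the more refined splitting $X_1-X_2 = U|X_1-X_2|^{1-q}\cdot|X_1-X_2|^q$ followed by H\"older with exponents $(\infty,1)$ yields only $\|X_1-X_2\|^{1-q}\,\|X_1-X_2\|_q^q\,/|\Im z|^2$, and the extra factor $\|X_1-X_2\|^{1-q}$ has no reason to be controlled by $\ell^{1-q}$ (it depends on the operators, not the function). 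Additionally, with a second-order quasi-analytic extension $|\bar\partial\widetilde h|\lesssim \|h''\|_\infty|\Im z|$, the $z$-integral against $|\Im z|^{-2}$ is logarithmically divergent at the real axis, so the claimed $z$-integrability does not hold; a third-order extension avoids this but costs a derivative and still leaves the $\|X_1-X_2\|^{1-q}$ factor. In short, the $q$-th power in $\|X_1-X_2\|_q^q$ is structurally of a different nature than what H\"older/Helffer--Sj\"ostrand can deliver; it typically arises from a decomposition of $X_1-X_2$ into singular-value pieces combined with pointwise bounds such as $\min(s,\ell)\leq \ell^{1-q}s^q$, or from Birman--Koplienko--Solomyak-type inequalities, which is the kind of machinery Sobolev develops in \cite{sobolevc2}. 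Your second step therefore requires a genuinely different argument than the one sketched.
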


Now we turn to the coefficient. The next Lemma allows the matrix-valued symbols to depend on the space variable, even though this will not be needed in the main theorem.

\begin{lem}\label{Coefficient-Hoelder}
Let $g$ satisfy Assumption \ref{Definition-Hoelder-Sobolev}. Let $A_1,A_2\in C^{\infty}_{b}(\R^{d} \times \R^{d}, \C^{n\times n})$ be Hermitian matrix-valued symbols and let $\partial\Lambda$ and $\partial\Gamma$ have finite $(d-1)$-dimensional surface measure induced by Lebesgue measure on $\R^d$. Let $q\in \,]0,\gamma[\,$.  Then there exists a constant $C>0$, independent of $g$ and $R$ such that
\begin{align}
	\big|\mathfrak{W}_1\big(\mathfrak{U}(g;A_1,A_2);\partial\Lambda,\partial\Gamma\big)\big|
	\leq C R^{\gamma -q} \bnorm{g}_2.
\end{align}
\end{lem}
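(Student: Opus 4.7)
The plan is to mimic the proof of Lemma~\ref{Coefficient-C2}, but replace the elementary mean-value estimate \eqref{Matrix-C1-Estimate} by the H\"older-type Schatten estimate provided by Theorem~\ref{No-Projections-Hoelder}. Since $\partial\Lambda$ and $\partial\Gamma$ have finite $(d-1)$-dimensional surface measure, it will suffice, as in Lemma~\ref{Coefficient-C2}, to prove the pointwise bound
\begin{equation}
	\sup_{(x,\xi)\in\partial\Lambda\times\partial\Gamma}
	\big|\big(\mathfrak{U}(g;A_1,A_2)\big)(x,\xi)\big|
	\leq C R^{\gamma-q}\bnorm{g}_{2}
\end{equation}
at every boundary point $(x,\xi)$ separately. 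Throughout the proof I fix such a point and abbreviate $A_{j}:= A_{j}(x,\xi)\in\C^{n\times n}$.

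Next, following the proof of Lemma~\ref{Coefficient-C2}, I use $1/[t(1-t)]=1/t+1/(1-t)$ to decompose the defining integral of $(2\pi)^{2}\mathfrak{U}(g;A_{1},A_{2})$ into two pieces analogous to \eqref{Coefficient-C2-Two-Integrals}. Writing
\begin{equation}
	g\big(A_{1}t+A_{2}(1-t)\big)-g(A_{1})t-g(A_{2})(1-t)
	= \big[g\big(A_{1}t+A_{2}(1-t)\big)-g(A_{2})\big] - \big[g(A_{1})-g(A_{2})\big]t,
\end{equation}
the first of these two integrals becomes
\begin{equation}
	\int_{0}^{1}\frac{\tr_{\C^{n}}\big[g\big(A_{1}t+A_{2}(1-t)\big)-g(A_{2})\big]}{t}\,\d t
	-\tr_{\C^{n}}\big[g(A_{1})-g(A_{2})\big],
\end{equation}
and the second is analogous with the roles of $A_{1}$ and $A_{2}$ (and $t$ and $1-t$) swapped.

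Now I apply Theorem~\ref{No-Projections-Hoelder} on the finite-dimensional Hilbert space $\C^{n}$ with $X_{1}= A_{1}t+A_{2}(1-t)$ and $X_{2}=A_{2}$, so that $X_{1}-X_{2}=t(A_{1}-A_{2})$. Since $|\tr_{\C^{n}}Z|\leq\|Z\|_{1}$, this yields
\begin{equation}
	\big|\tr_{\C^{n}}\big[g\big(A_{1}t+A_{2}(1-t)\big)-g(A_{2})\big]\big|
	\leq C\bnorm{g}_{2} R^{\gamma-q}\,t^{q}\,\|A_{1}-A_{2}\|_{q}^{q},
\end{equation}
and taking $t=1$ also bounds $|\tr_{\C^{n}}[g(A_{1})-g(A_{2})]|$. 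Because the Schatten-$q$-quasi-norm is equivalent to the operator norm on the finite-dimensional space $\C^{n}$, and $A_{1},A_{2}$ are uniformly bounded in $(x,\xi)\in\R^{d}\times\R^{d}$ by assumption, $\|A_{1}-A_{2}\|_{q}^{q}$ is bounded above by a constant independent of $(x,\xi)$. The factor $t^{q}/t=t^{q-1}$ is integrable on $[0,1]$ thanks to $q>0$. Treating the second integral in the same way and combining everything gives the pointwise bound, which integrated against the surface measures completes the proof.

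The main obstacle is identifying the correct finite-dimensional application of Theorem~\ref{No-Projections-Hoelder}: the Schatten class appearing in the theorem is taken on the tiny space $\C^{n}$ (not on $L^{2}(\R^{d})\otimes\C^{n}$), so the seemingly large $R^{\gamma-q}$-factor can be extracted cheaply while preserving uniformity in $(x,\xi)$; once this is correctly set up, the computation parallels that of Lemma~\ref{Coefficient-C2}.
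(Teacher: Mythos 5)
Your proposal is correct and follows essentially the same route as the paper's proof: the same reduction to a pointwise bound, the same splitting $1/[t(1-t)]=1/t+1/(1-t)$, and the same application of Theorem~\ref{No-Projections-Hoelder} on $\C^{n}$ to the pair $A_1t+A_2(1-t)$, $A_2$, with the $t^{q-1}$ integrability and uniform boundedness of the symbols closing the argument.
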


\begin{proof}
As the surface measure of $\partial\Lambda$ and $\partial\Gamma$ is finite, it suffices to show 
\begin{align}
\sup_{(x,\xi)\in\partial\Lambda\times\partial\Gamma}\big|\big(\mathfrak{U}(g;A_1,A_2)\big)(x,\xi)\big|\leq  C R^{\gamma -q} \bnorm{g}_2.
\end{align}

As in the proof of Lemma \ref{Coefficient-C2}, the strategy is to estimate pointwise, use the fact that $\frac{1}{t(1-t)}=\frac{1}{t}+\frac{1}{(1-t)}$ and consider the two integrals in (\ref{Coefficient-C2-Two-Integrals}) separately. 
For Hermitian matrices $M_1,M_2\in\C^{n\times n}$, an application of Theorem \ref{No-Projections-Hoelder} with $\mathcal{H}=\C^n$ and $X_1=M_1,$ $X_2=M_2$ yields
\begin{align}\label{Distance-Hoelder}
\|g(M_1)-g(M_2)\|_1\leq C \bnorm{g}_2 R^{\gamma-q} \|M_1-M_2\|_q^q,
\end{align}
with $C$ independent of $M_1,M_2,g$ and $R$. Here the trace and $q$-norms are the appropriate matrix norms. 

The first integral is as in (\ref{Coefficient-C2-Integral-1}), and we get
\begin{align}
	\int_0^1 & \frac{\tr_{\C^n}\big[g\big(A_1t+A_2(1-t)\big)-g(A_2)\big]}{t}\,\d t 
		- \tr_{\C^n}[g(A_1)-g(A_2)] \nonumber\\ 
	&\leq C \bnorm{g}_2 R^{\gamma-q}\left( \int_0^1 \frac{\|(A_1t+A_2(1-t)-A_2\|_q^q}{t}\,\d t 
		+ \|A_1-A_2\|_q^q \right) \nonumber \\ 
	&\leq  C \bnorm{g}_2 R^{\gamma -q}\big(\|A_1\|_q^q+\|A_2\|_q^q\big)
		\left(\int_0^1 \frac{t^{q}}{t}\,\d t+1\right)
\end{align}
by applying (\ref{Distance-Hoelder}).
We estimate the second integral in an analogous way and use the fact that the estimates hold for all $(x,\xi)\in\partial\Lambda\times\partial\Gamma$ to get the result.
\end{proof}

We are now ready to close the asymptotics for test functions satisfying Assumption~\ref{H-Functions}, in particular for the R\'enyi entropy functions.
\begin{thm}
	\label{Asymptotics-Hoelder}
	Let $A_1,A_2\in C^{\infty}_{b}(\R^{d}, \C^{n\times n})$ be matrix-valued symbols, which only depend on the 
	momentum variable $\xi$. We assume that $A_2$ is compactly supported in $\xi$. Let $\Lambda$ be a bounded piece-wise 
	$C^1$-admissible domain and $\Gamma$ be a bounded piece-wise $C^3$-admissible domain. 
	Let the function $h$ satisfy Assumptions \ref{H-Functions} and assume 
	that $h(0)=0$. Then 
	\begin{align}
		\tr_{L^2(\R^d)\otimes\C^n}\big[h\big(G_L(&A_1,A_2;\Lambda, \Gamma)\big)\big] \nonumber \\ 
		=& \ L^d \Big[\mathfrak{W}_0\big(\tr_{\C^n}[h(\Real A_1)];\Lambda,\Gamma\big)
			+\mathfrak{W}_0\big(\tr_{\C^n}[h(\Real A_2)];\Lambda,\Gamma^c\big)\Big]\nonumber\\
		&+ L^{d-1}\log L \ \mathfrak{W}_1\big(\mathfrak{U}(h;\Real A_1,\Real A_2);\partial\Lambda,
			\partial\Gamma\big)\nonumber\\ 
		&+ o(L^{d-1}\log L),
	\end{align}
	as $L\rightarrow\infty$. Here, the coefficients $\mathfrak{W}_0$ and $\mathfrak{W}_1$ were defined 
	in \eqref{coeff-w0-def} and \eqref{coeff-w1-def},	respectively, and the symbol $\mathfrak{U}$ 
	in \eqref{frakU-def}.
\end{thm}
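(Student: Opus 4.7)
The plan is to follow the blueprint of Theorem \ref{Asymptotics-Smooth}, replacing its Stone--Weierstrass polynomial approximation by the H\"older--Sobolev decomposition of Lemma \ref{Projections-H-Functions} and its coefficient bound (Lemma \ref{Coefficient-C2}) by Lemma \ref{Coefficient-Hoelder}. The restriction that $A_1, A_2$ depend only on $\xi$ is essential: under it, $\Op_L(\Real A_k)$ is a self-adjoint matrix-valued Fourier multiplier, and $f(\Op_L(\Real A_k)) = \Op_L(f(\Real A_k))$ holds exactly for every bounded Borel $f$ by functional calculus. This substitutes for Lemma \ref{Operator-Smooth}, which is unavailable for functions of low regularity.

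First I would use the uniform operator-norm bound on $G_L$ (from Lemma \ref{Bounded-Lemma}) to reduce to compactly supported $h$. Given $\epsilon > 0$, pick $R \in \,]0, R_0]$ (to be sent to $0$ in the end) and apply Lemma \ref{Projections-H-Functions} to split $h = \sum_{j=1}^N h_{R,j} + g_R$, with $g_R \in C_c^2(\R)$, $g_R(0) = 0$, and each $h_{R,j}$ satisfying Assumption \ref{Definition-Hoelder-Sobolev} at $x_j$ and with $\bnorm{h_{R,j}}_2$ uniformly bounded in $R$. The core technical step is a H\"older analogue of Lemma \ref{Combination-Smooth}: running through its proof for each $h_{R,j}$, but with Lemma \ref{Projections-C2} replaced by Lemma \ref{Projections-H-Functions} and Lemma \ref{Operator-Smooth} replaced by the exact multiplier identity above, should give, for some $q \in \,]0,\gamma[\,$,
\[
\bigl\| h_{R,j}(G_L) - D_L\bigl(h_{R,j}(\Real A_1), h_{R,j}(\Real A_2)\bigr) \bigr\|_1 \leq C \bnorm{h_{R,j}}_2\, R^{\gamma - q}\, L^{d-1}\log L + O(L^{d-1}).
\]
For the smooth remainder $g_R \in C_c^2$, the same $\xi$-only simplification yields an analogous bound, and the Stone--Weierstrass step in the proof of Theorem \ref{Asymptotics-Smooth} (whose estimates only require $C^2$ control) then delivers the full Widom--Sobolev asymptotic for $g_R$ up to $o(L^{d-1}\log L)$. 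The volume terms are computed by diagonal-kernel integration as in Lemma \ref{Volume-Smooth}, which stays valid since $h(\Real A_j)$ is a bounded measurable matrix-valued Fourier multiplier symbol.

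Summing the contributions over $j$ and using linearity of $g \mapsto \mathfrak{U}(g; B_1, B_2)$ in its first argument, together with Lemma \ref{Coefficient-Hoelder} to bound $|\mathfrak{W}_1(\mathfrak{U}(h - g_R; \Real A_1, \Real A_2); \partial\Lambda, \partial\Gamma)| \leq C R^{\gamma - q}$, one deduces that both the $\limsup$ and $\liminf$ as $L \to \infty$ of the trace of $h(G_L)$ minus the target volume and logarithmic terms, divided by $L^{d-1}\log L$, lie within $C R^{\gamma - q}$ of zero. Sending $R \to 0$ then closes the asymptotics. The main obstacle is the H\"older adaptation of Lemma \ref{Combination-Smooth}: each of its three internal uses of Lemma \ref{Projections-C2} (one for the outer $\mathbf{1}_\Lambda$ cutoff and two for the $\Op_L(\mathbf{1}_\Gamma)$, $\Op_L(\mathbf{1}_{\Gamma^c})$ projections) must be substituted by Lemma \ref{Projections-H-Functions}, and the area-term estimate \eqref{Estimate-Area-Q} must be invoked for the two $\Gamma$-projection steps so that their contributions remain of order $L^{d-1}$ rather than $L^{d-1}\log L$.
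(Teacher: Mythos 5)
Your proposal is correct and follows essentially the same route as the paper: decompose $h$ via Lemma \ref{Projections-H-Functions}, anchor on polynomials through Stone--Weierstrass, replace Lemma \ref{Operator-Smooth} by the exact functional-calculus identity available for $\xi$-only symbols, bound the error coefficient with Lemma \ref{Coefficient-Hoelder}, and close with the limsup/liminf sandwich as $R\to 0$ and $\epsilon \to 0$. The only (harmless) difference is organisational: the paper applies the projection lemma just once, with $P=\mathbf{1}_\Lambda$ and $X=G_L(A_1,A_2;\R^d,\Gamma)$, since for $\xi$-only symbols the exact identity \eqref{Combination-Xi} renders the two momentum-projection peeling steps you plan (and the use of \eqref{Estimate-Area-Q} there) unnecessary -- those commutators vanish identically.
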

\begin{proof}
As the operator $G_L(A_1,A_2;\Lambda, \Gamma)$ is bounded, we can assume that $h$ is compactly supported. By Lemma \ref{Projections-H-Functions} there is some $R_0>0$ such that for every $R\in\,]0,R_0]$ we get the decomposition $h=\sum_{j=1}^Nh_{R,j}+g_{R}$, with $h_{R,j}$ satisfying Assumption \ref{Definition-Hoelder-Sobolev} with $x_{0}=x_{j}$ for every $j\in\,\{1,\ldots,N\}$ and $g_R\in C_c^2(\R)$. Let $\epsilon>0$ and $R\in\,]0,R_0]$ be arbitrary. Then we find a polynomial $p_{R,\epsilon}$ 
such that 
\begin{equation}\label{Approximation-Polynomial-Epsilon}
\max_{0 \leq k \leq 2} \|g_{R,\epsilon}^{(k)}\|_\infty<\epsilon
\end{equation}
for $g_{R,\epsilon}:=g_{R}-p_{R,\epsilon}$ by the theorem of Stone--Weierstrass. 

We now study the function $h_{R,\epsilon}:=h-p_{R,\epsilon}=\sum_{j=1}^Nh_{R,j}+g_{R,\epsilon}$. As it is decomposed in the same manner as $h$ in Lemma \ref{Projections-H-Functions}, the estimate from Lemma \ref{Projections-H-Functions} holds with $h=h_{R,\epsilon}$, $g_R=g_{R,\epsilon}$,
$X=G_L\big(A_1,A_2;\R^d,\Gamma\big)$, $P=\mathbf{1}_\Lambda$ and arbitrary 
$q\in \,]0,\gamma[\,$. We obtain
\begin{multline}\label{Asymptotics-Hoelder-Estimate-1}
\Big\|h_{R,\epsilon}\big(G_L(A_1,A_2)\big) - \mathbf{1}_\Lambda h_{R,\epsilon}\big(G_L(A_1,A_2;\R^d,\Gamma) \big)
	\mathbf{1}_\Lambda\Big\|_1 \\
	\leq \Big(\sum_{j=1}^{N} C_j \bnorm{h_{R,j}}_2 R^{\gamma-q} + C_0 \max_{0 \leq k \leq 2} \|g_{R,\epsilon}^{(k)}\|_\infty \Big) \Big\|\mathbf{1}_\Lambda G_L(A_1,A_2;\R^d,\Gamma) 
	\mathbf{1}_{\Lambda^c}\Big\|_q^q.
\end{multline} 
As the symbols $A_1$ and $A_2$ only depend on the variable $\xi$, the functional calculus yields the following equality
\begin{align}\label{Combination-Xi}
	D_L\big(h_{R,\epsilon}(\Real A_1),h_{R,\epsilon}(\Real A_2)\big) = \mathbf{1}_\Lambda h_{R,\epsilon}\big(G_L(A_1,A_2;\R^d,\Gamma) \big)
	\mathbf{1}_\Lambda.
\end{align}
With this  and Lemma \ref{Trace class estimates} we conclude from \eqref{Asymptotics-Hoelder-Estimate-1} that
\begin{equation}
\Big\|h_{R,\epsilon}\big(G_L(A_1,A_2)\big) - D_L\big(h_{R,\epsilon}(\Real A_1),h_{R,\epsilon}( \Real A_2)\big)\Big\|_1 \leq C(R^{\gamma-q}+\epsilon) L^{d-1}\log L,
\end{equation}
where the constant $C$ is independent of $L$, $R$ and $\epsilon$. Here we additionally used \eqref{Approximation-Polynomial-Epsilon} and that $\bnorm{h_{R,j}}_2$ is bounded uniformly in $R\in\,]0,R_{0}]$ (cf.\ the proof of Lemma \ref{Projections-H-Functions}).
Using that
\begin{align}
	\tr_{L^2(\R^d)\otimes\C^n}\big[ & h(G_L(A_1, A_2)) - D_L\big(h(\Real A_1),h(\Real A_2)\big)\big]\notag \\ 
	&\leq  \tr_{L^2(\R^d)\otimes\C^n}\big[p_{R,\epsilon}\big(G_L(A_1,A_2)\big) 
			- D_L\big(p_{R,\epsilon}(\Real A_1),p_{R,\epsilon}(\Real A_2)\big)\big]\notag \\
	&\quad + \big\|h_{R,\epsilon}\big(G_L(A_1,A_2)\big) - D_L\big(h_{R,\epsilon}(\Real A_1),h_{R,\epsilon}(\Real A_2)\big)\big\|_1,
\end{align}
and that the asymptotic formula is already established for polynomials, we get
\begin{multline}\label{Asymptotics-Hoelder-Estimate-2}
	\underset{L\rightarrow\infty}{\limsup}\frac{\tr_{L^2(\R^d)\otimes\C^n}\big[ h\big(G_L(A_1,A_2)\big)
		- D_L\big(h(\Real A_1),h(\Real A_2)\big)\big]}{L^{d-1}\log L}  \\ 
	\leq \mathfrak{W}_1(p_{R,\epsilon})+C(R^{\gamma-q}+\epsilon),
\end{multline}
where $\mathfrak{W}_1(p_{R,\epsilon}):=\mathfrak{W}_1\big(\mathfrak{U}(p_{R,\epsilon};\Real A_1,\Real A_2);\partial\Lambda,\partial\Gamma\big).$
For the coefficient $\mathfrak{W}_1(h_{R,\epsilon})$, Lemmas \ref{Coefficient-C2} and \ref{Coefficient-Hoelder} yield
\begin{align}
|\mathfrak{W}_1(h_{R,\epsilon})|\leq  \sum_{j=1}^N |\mathfrak{W}_1(h_{R,j})|+|\mathfrak{W}_1(g_{R,\epsilon})| \leq C(R^{\gamma-q}+\epsilon),
\end{align}
where the constant $C$ is again independent of $L$, $R$ and $\epsilon$, and we again used \eqref{Approximation-Polynomial-Epsilon} and that $\bnorm{h_{R,j}}_2$ is bounded uniformly in $R\in\,]0,R_{0}]$.
Therefore, 
\begin{align}
\mathfrak{W}_1(p_{R,\epsilon})\leq \mathfrak{W}_1(h)+|\mathfrak{W}_1(h_{R,\epsilon})|\leq\mathfrak{W}_1(h)+C(R^{\gamma-q}+\epsilon).
\end{align}
Combining this with \eqref{Asymptotics-Hoelder-Estimate-1}, we obtain
\begin{equation}
	\underset{L\rightarrow\infty}{\limsup}\frac{\tr_{L^2(\R^d)\otimes\C^n}\big[h\big(G_L(A_1,A_2)\big)
		- D_L\big(h(\Real A_1),h(\Real A_2)\big)\big]}{L^{d-1}\log L}  
	\leq  \mathfrak{W}_1(h)+C(R^{\gamma-q}+\epsilon).
\end{equation}
The corresponding lower bound for the liminf is found in the same way. As $R$ and $\epsilon$ are arbitrarily small, we conclude
\begin{align}\label{Asymptotics-Hoelder-Limit-Result}
	\underset{L\rightarrow\infty}{\lim} \frac{\tr_{L^2(\R^d)\otimes\C^n}\big[h\big(G_L(A_1,A_2)\big)
		- D_L\big(h(\Real A_1),h(\Real A_2)\big)\big]}{L^{d-1}\log L}
	= \mathfrak{W}_1(h).
\end{align}
As the symbols $A_1$ and $A_2$ only depend on the variable $\xi$, the operator $D_L\big(h(\Real A_1),h(\Real A_2)\big)$ gives rise to the volume terms by integrating its kernel along the diagonal, and we obtain the desired asymptotic formula
\begin{align}
	\tr_{L^2(\R^d)\otimes\C^n}\big[h\big(G_L(&A_1,A_2;\Lambda, \Gamma)\big)\big] \nonumber \\ 
	&= L^d \big[\mathfrak{W}_0\big(\tr_{\C^n}[h(\Real A_1)];\Lambda,\Gamma\big)
		+\mathfrak{W}_0\big(\tr_{\C^n}[h(\Real A_2)];\Lambda,\Gamma^c\big)\big]\nonumber\\
	&\quad + L^{d-1}\log L \ \mathfrak{W}_1\big(\mathfrak{U}(h;\Real A_1,\Real A_2);\partial\Lambda,\partial\Gamma\big)\nonumber\\ 
	&\quad +o(L^{d-1}\log L),
\end{align}
as $L\rightarrow\infty$.
\end{proof}

\begin{rem}\label{Remark-Gamma-GammaC}
The condition that $\Gamma$ is bounded in Theorems \ref{Asymptotics-Analytic}, \ref{Asymptotics-Smooth} and \ref{Asymptotics-Hoelder} can be replaced by the condition that $A_1$ is compactly supported in the second variable.

To see this note that the asymptotics for polynomials is already established under this condition (cf. Theorem \ref{Asymptotics-Gamma-Gamma-Complement} and Corollary \ref{Asymptotics-Gamma-Gamma-Complement-G_L}) and treat $A_1$ analogously to $A_2$ throughout the relevant proofs in this chapter. Of particular note here is the fact that this is possible, as we never used the fact that the complement of $\Gamma^c$ is bounded. 
 
The condition that $\Gamma$ is bounded in the estimates for the coefficient (Lemma \ref{Coefficient-Analytic}, \ref{Coefficient-C2} and \ref{Coefficient-Hoelder}) is no longer necessary, as with $A_1$ and $A_2$ being compactly supported in the second variable the symbol $\mathfrak{U}(g;A_1,A_2)$ is also compactly supported in the second variable.
\end{rem}

\begin{rem}\label{Remark-Lambda-LambdaC}
	Sometimes it is useful to consider domains $\Lambda$ which are not bounded themselves but only their complement is bounded. Then, an asymptotic formula as in Theorem~\ref{Asymptotics-Hoelder} does not hold, as the operator $h(G_L(A_1,A_2;\Lambda,\Gamma))$ is in general not trace-class. Nevertheless one gets the following interim result, cf.\ \eqref{Asymptotics-Hoelder-Limit-Result}, under the same assumptions as in Theorem \ref{Asymptotics-Hoelder} except that now $\Lambda^c$ needs to be bounded instead of $\Lambda$
	\begin{multline}
		\label{Lambda-Complement-Hoelder-Statement}
		\lim_{L\rightarrow\infty} \frac{\tr_{L^2(\R^d)\otimes\C^n} 
			\big[h\big(G_L(A_1,A_2;\Lambda,\Gamma)\big) - G_L\big(h(\Real A_1), h(\Real A_2);
			\Lambda,\Gamma\big)\big]}{L^{d-1}\log L} \\ 
		=  \mathfrak{W}_1\big(\mathfrak{U}(h;\Real A_1,\Real A_2);\partial\Lambda,\partial\Gamma\big).
	\end{multline}
\end{rem}

\begin{proof}
	The crucial part is to establish (\ref{Lambda-Complement-Hoelder-Statement}) for polynomial test 
	functions. The extension to more general functions  works as in Theorem \ref{Asymptotics-Hoelder}.
	Define $\phi\in C_c^\infty(\R^d)$ with $\phi\vert_{\Lambda^c}=1$. Recall that the symbols $A_1$ and $A_2$ in Theorem \ref{Asymptotics-Hoelder} only depend on the variable $\xi$. Note that 
	$\phi G_L(A_1,A_2;\R^d,\Gamma)\sim G_L(A_1,A_2;\R^d,\Gamma)\phi$ by Lemma \ref{Commutation-Lemma} 
	and \ref{Exchange-Lemma} as well as $(1-\phi^p)1_{\Lambda}=(1-\phi^p)$ for every $p\in\N$. Therefore, 
	for every $p\in\N$ we get 
	\begin{align}
		\label{Lambda-Complement-Hoelder-1}
		\phi^p\big(G_L(A_1,A_2;\Lambda,\Gamma)\big)^p \sim \big(G_L(\phi A_1,\phi A_2;\Lambda,\Gamma)\big)^p
	\end{align}
	and
\begin{multline}\label{Lambda-Complement-Hoelder-2}
	(1-\phi^p)\big(G_L(A_1,A_2;\Lambda,\Gamma)\big)^p-(1-\phi^p) G_L\big((\Real A_1)^p,(\Real A_2)^p;\Lambda,\Gamma\big) \\  \sim (1-\phi^p)\big(G_L(A_1, A_2;\R^d,\Gamma)\big)^p -(1-\phi^p) G_L\big((\Real A_1)^p,(\Real A_2)^p;\R^d,\Gamma\big) = 0.
\end{multline}
Combining (\ref{Lambda-Complement-Hoelder-1}) and (\ref{Lambda-Complement-Hoelder-2}), we conclude
\begin{multline}
	\big(G_L(A_1,A_2;\Lambda,\Gamma)\big)^p - G_L\big((\Real A_1)^p,(\Real A_2)^p;\Lambda,\Gamma\big)\\ 
	\sim \big(G_L(\phi A_1,\phi A_2;\Lambda,\Gamma)\big)^p 
		-G_L\big((\phi\Real A_1)^p,(\phi\Real A_2)^p;\Lambda,\Gamma\big).
\end{multline}
Due to the presence of $\phi$ in both symbols we can treat $\Lambda$ as a bounded domain, and Corollary
\ref{Asymptotics-Gamma-Gamma-Complement-G_L} yields 
\begin{multline}
	\underset{L\rightarrow\infty}{\lim} \frac{\tr_{L^2(\R^d)\otimes\C^n}\big[
		\big(G_L(\phi A_1,\phi A_2;\Lambda,\Gamma)\big)^p
		-G_L\big((\phi\Real A_1)^p,(\phi\Real A_2)^p;\Lambda,\Gamma\big)\big]}{L^{d-1}\log L} \\
	=  \mathfrak{W}_1\big(\mathfrak{U}(\id^p;\phi\Real A_1,\phi\Real A_2);
			\partial\Lambda,\partial\Gamma\big).
\end{multline}
As $\phi\vert_{\partial\Lambda}=1$, we get the desired result for polynomials.
\end{proof}

\section*{Acknowledgements}
The authors are grateful to the anonymous referees for their valuable comments and suggestions.
This work was partially supported by the Deutsche Forschungsgemeinschaft (DFG, German Research Foundation) 
	-- TRR 352 ``Mathematics of Many-Body Quantum Systems and Their Collective Phenomena" -- Project-ID 470903074.

%\bibliographystyle{mybib}
%\bibliography{references-Ruth,references}
\providecommand{\noopsort}[1]{} \providecommand{\singleletter}[1]{#1}
  \providecommand{\noopsort}[1]{} \providecommand{\singleletter}[1]{#1}

\end{document}